\begin{document}
\title{Weak parabolic Harnack inequality and H\"older regularity for non-local Dirichlet forms}
\author{Guanhua Liu\footnote{Fakult\"at f\"ur Mathematik, Universit\"at Bielefeld, Postfach 100131, D-33501 Bielefeld, Germany. E-mail: gliu@math.uni-bielefeld.de\\The research is supported by German Research Foundation (DFG) SFB 1283}}
\date{February 6, 2025}
\maketitle

\newtheorem{theorem}{Theorem}[section]
\newtheorem{proposition}[theorem]{Proposition}
\newtheorem{lemma}[theorem]{Lemma}
\newtheorem{remark}[theorem]{Remark}
\newtheorem{example}[theorem]{Example}
\newtheorem{corollary}[theorem]{Corollary}
\numberwithin{equation}{section}

\begin{abstract}
In this paper we give equivalent conditions for the weak parabolic Harnack inequality for general regular Dirichlet forms without killing part, in terms of local heat kernel estimates or growth lemmas. With a tail estimate on the jump measure, we obtain from these conditions the H\"older continuity of caloric and harmonic functions. Our results generalize the theory of Chen, Kumagai and Wang, in the sense that the upper jumping smoothness condition (UJS) is canceled. We also derive the complete forms of Harnack inequalities from the globally non-negative versions, and obtain continuity of caloric functions with worse tails.\footnote{MSC2020: 34K30 (primary), 31C25, 35K08, 47D07, 60J46 (secondary)\\Keywords: weak parabolic Harnack inequality, parabolic growth lemma, local heat kernel, Dirichlet form, H\"older regularity}
\end{abstract}

\section{Introduction}

Harnack inequalities are widely discussed in the theory of partial differential equations and Markov processes, controlling the ratio between the supremum and infimum of a harmonic or caloric function in a regular domain by a constant independent of the function. The elliptic Harnack inequality (EHI) was first raised by C. G. von Harnack in 1887 for the real part of a complex analytic function, while the parabolic Harnack inequality (PHI) by B. Pini \cite{pi} for classical solutions of the heat equation. The versions for weak solutions were established by E. de Giorgi \cite{dg} (elliptic) and J. Moser \cite{mos-p} (parabolic), from which the inner regularity follows, see J. Nash \cite{nas} and Moser \cite{mos-e,mos+}. The related results constitute the so-called DeGiorgi-Nash-Moser theory of partial differential equations. We can refer to \cite{ka} for more about the early history of Harnack inequalities.

EHI, PHI and related properties can also be defined for various operators on different types of spaces. We particularly mention that there are Harnack inequalities for non-local operators, which have been widely discussed in \cite{ck,ckw-e,ckw-p,dckp,kw} for example. Here EHI and PHI make sense for globally non-negative solutions only, or alternatively, for those solutions that are only non-negative on the target region, a tail of the negative part must be added to the infimum side.

Obviously PHI implies EHI, while several different examples were given in \cite{del+} where EHI holds but PHI fails. Till now there is no non-trivial equivalent characterization of EHI, but it is known to be stable under quasi-isometries according to \cite{bm} based on some standard assumptions (especially strong locality).

On the other hand, there is a very convenient equivalent expression of PHI in terms of heat kernel estimates in many cases (local or non-local), for example, diffusions on Riemannian manifolds \cite{gri,sc} and $\alpha$-stable jump processes on metric spaces \cite{ck}. A complete theory on this topic for general local Dirichlet spaces is given in \cite{bbk} and \cite{ghl}, respectively in the language of probability and analysis. A non-local counterpart is given by \cite{ckw-e,ckw-p}, only in the language of probability. This paper serves as a first (and major) part of the analytic analog, mainly concerning the weak parabolic Harnack inequality.

Following Moser's idea in \cite{mos-e,mos-p}, EHI and PHI are usually proved by a mean-value inequality (EMV for elliptic case, PMV for parabolic), controlling the upper bound by the average, and a weak Harnack inequality (wEH, wPH respectively), controlling the average by the lower bound. Theory on mean-value inequalities is established for general Dirichlet forms in \cite{ghl} (local, elliptic), \cite{ab} (local, parabolic), \cite{ghh+1} (non-local, elliptic) and \cite{ghh+2} (non-local, parabolic). On the other hand, wEH and wPH are usually proved by EMV and PMV applied on a transform of the (super)solution (say in \cite{kk}), or by estimating measures of level sets. The latter approach focuses on the ``growth lemma'', which was raised by Landis \cite{land} for elliptic equations, and then by Krylov and Safonov \cite{ks} for parabolic ones, particularly for non-divergence differential operators.

On general Dirichlet spaces (especially non-local ones), wEH is proved in \cite{hy} under very strong conditions, and the only equivalent characterization given there is an elliptic growth lemma. Meanwhile, for wPH there has been neither sufficient nor necessary condition raised -- it has even never appeared in the general setting. On $\mathbb{R}^d$, a proof of wPH is given by \cite[Theorem 1.1]{fk} for $\alpha$-stable non-local operators, and also in \cite{gk,is,str} under similar settings. However, in these papers the jump measure is assumed to be bounded from both sides, which is too strong for wPH, or even for PHI: firstly, such assumptions can not cover strongly local forms and $\rho$-local forms (i.e., the jump measure vanishes between any two sets with distance greater than $\rho$); secondly, PHI holds for some subordinated diffusions whose jump measure has a different scale (see \cite[Corollary 2.5]{lm2}).

In this paper, we give general equivalent criteria for wPH in Theorems \ref{M1} and \ref{M2}. They are essential in the sense that PHI may fail under these conditions. To be exact, we remove the upper jumping smoothness condition (UJS), which is necessary for PHI by \cite[Proposition 3.3]{ckw-p}. We will prove the two theorems respectively in Sections \ref{PM1} and \ref{PM2}. Further properties related to wPH are discussed in Sections \ref{Tail} and \ref{ctn}, regulating the tail term and yielding a more general continuity criterion respectively. These properties help connect our results with the stochastic theory \cite{ckw-e,ckw-p} on PHI. In particular, we obtain in Corollary \ref{M4} the relation between weak and strong parabolic Harnack inequalities.

\section{Notions and main results}
Throughout this paper, we assume that $(M,d)$ is a locally compact, complete and separable metric space, and $\mu$ is a Radon measure of full support on $M$. Denote
$$\overline{R}:=2\ \mathrm{diam}(M,d)\in[0,\infty].$$
For any $x\in M$ and $r>0$, let the metric ball $B(x,r)$ be the set $\{y\in M:d(x,y)<r\}$ equipped with fixed center $x$ and radius $r$. Note that every metric ball is precompact under our assumptions. For any $0<\lambda<\infty$ and any ball $B=B(x,r)$, let
$$\lambda B:=B(x,\lambda r)\quad\mbox{and}\quad V(x,r):=\mu(B(x,r)).$$

The following two conditions are always assumed in the sequel:

\begin{itemize}
\item $(\mathrm{VD})$, the \emph{volume doubling} property: there exists $C\ge 1$ such that $V(x,2R)\le CV(x,R)$ for all $x\in M$ and $R>0$. Equivalently, there exist $\alpha,C>0$ such that for all $x,y\in M$ and $0<r\le R<\infty$,
$$\frac{V(x,R)}{V(y,r)}\le C\left(\frac{R+d(x,y)}{r}\right)^\alpha.$$

\item $(\mathrm{RVD})$, the \emph{reverse volume doubling} property: there exists $C\ge 1$ such that $V(x,R)\ge 2V(x,C^{-1}R)$ for all $x\in M$ and $0<R<\overline{R}$. Equivalently, there exist $\alpha',C>0$ such that for all $x\in M$ and $0<r\le R<\overline{R}$,
$$\frac{V(x,R)}{V(x,r)}\ge C^{-1}\left(\frac{R}{r}\right)^{\alpha'}.$$
\end{itemize}

Let $\mathcal{F}$ be a dense subspace of $L^2(M,\mu)$, and $\mathcal{E}$ be a symmetric non-negative semi-definite quadratic form on $\mathcal{F}$. We call $(\mathcal{E},\mathcal{F})$ a \emph{Dirichlet form} on $L^2(M,\mu)$, if
\begin{itemize}
\item $\mathcal{F}$ is a Hilbert space equipped with the inner product $\mathcal{E}_1(u,v):=\mathcal{E}(u,v)+(u,v)_{L^2(\mu)}$;
\item the Markovian property holds: $f_+\wedge 1\in\mathcal{F}$ for every $f\in\mathcal{F}$, and $\mathcal{E}(f_+\wedge 1,f_+\wedge 1)\le\mathcal{E}(f,f)$.
\end{itemize}
It is called \emph{regular}, if $\mathcal{F}\cap C_0(M,d)$ is dense both in $\mathcal{F}$ (under norm $\mathcal{E}_1^{1/2}$) and in $C_0(M,d)$ (under norm $\|\cdot\|_\infty$). In this case, for any open set $\Omega\subset M$ and any Borel subset $U\Subset\Omega$, there exists $\phi\in\mathcal{F}\cap C_0(\Omega)$ such that $\phi=1$ on $U$ and $0\le\phi\le 1$ throughout $M$. The collection of such $\phi$ is denoted by $\mathrm{cutoff}(U,\Omega)$.

Every regular Dirichlet form admits a Beurling-Deny decomposition as follows:
\begin{equation}\label{bd}
\mathcal{E}(u,v)=\mathcal{E}^{(L)}(u,v)+\int_{M\times M}(u(x)-u(y))(v(x)-v(y))dj(x,y)+\int_Muvd\kappa
\end{equation}
for all $u,v\in\mathcal{F}\cap C_0(\Omega)$, where $\mathcal{E}^{(L)}$ is \emph{strongly local}, that is, $\mathcal{E}^{(L)}(u,v)=0$ whenever there exists $a\in\mathbb{R}$ such that $\mathrm{supp}(u-a)\cap\mathrm{supp}(v)=\emptyset$; the \emph{jump measure} $j$ is a symmetric Radon measure on $M\times M$ with $j(\mathrm{diag})=0$; the \emph{killing measure} $\kappa$ is a Radon measure on $M$. See \cite[Theorem 4.5.2]{fot} for the meaning of these notions.

Throughout this paper we assume that $\kappa\equiv 0$, and there exists a Borel transition kernel $J(x,dy)$ such that
$$j(dx,dy)=d\mu(x)J(x,dy)=d\mu(y)J(y,dx).$$
In this case, $\mathcal{E}$ can be extended to $\mathcal{F}'$, the collection of all functions $u=v+a$, where $v\in\mathcal{F}$ and $a\in\mathbb{R}$, by $\mathcal{E}(u,u):=\mathcal{E}(v,v)$. Further, (\ref{bd}) holds for all $u,v\in\mathcal{F}'$.

Recall that for every $u\in\mathcal{F}$, there is a unique Radon measure $\Gamma(u)$ on $M$, called the (le Jan) \emph{energy form} of $u$, determined by
$$\int_M\varphi d\Gamma(u)=\mathcal{E}(u,\varphi u)-\frac{1}{2}\mathcal{E}(u^2,\varphi)\quad\mbox{for all}\quad\varphi\in\mathcal{F}.$$
Under our assumptions, $\Gamma(u)$ can also be defined for any $u\in\mathcal{F}'$ (with $\Gamma(c)\equiv 0$ for any constant $c$), and is decomposed into
$$d\Gamma(u)(x)=d\Gamma^{(L)}(u)(x)+\left(\int_M(u(x)-u(y))^2J(x,dy)\right)d\mu(x),$$
where $\Gamma^{(L)}$ is the energy form for $\mathcal{E}^{(L)}$.

For every open set $\Omega\subset M$, we also define the $\Omega$-local energy
$$d\Gamma_\Omega(u)(x)=1_\Omega(x)\left\{d\Gamma^{(L)}(u)(x)+\left(\int_\Omega(u(x)-u(y))^2J(x,dy)\right)d\mu(x)\right\}.$$
Let $\mathcal{F}(\Omega)$ be the closure of $\mathcal{F}\cap C_0(\Omega)$ in $(\mathcal{F},\mathcal{E}_1)$, and $P_t^\Omega$ be the \emph{heat semigroup} on $L^2(\Omega)$ associated with the Dirichlet form $(\mathcal{E},\mathcal{F})$, that is, for any $f\in L^2(\Omega)$ and $\varphi\in\mathcal{F}(\Omega)$,
$$\lim\limits_{t\downarrow 0}\left\|P_t^\Omega f-f\right\|_{L^2(\Omega)}=0\quad\mbox{and}\quad\frac{\mathrm{d}}{\mathrm{d}t}\left(P_t^\Omega f,\varphi\right)+\mathcal{E}\left(P_t^\Omega f,\varphi\right)=0\quad\mbox{for all}\quad t>0.$$
It can always be extended to a contractive semigroup on $L^p(\Omega)$ with $1\le p\le\infty$, still denoted as $P_t^\Omega$, and thus defining a Borel transition kernel $p_t^\Omega(x,dy)$ on $\Omega$ by
$$P_t^\Omega f(x)=\int_\Omega f(y)p_t^\Omega(x,dy)$$
for $\mu$-a.e.\ $x\in\Omega$ and all $f\in L^p(\Omega)$ with any $1\le p\le\infty$. Be careful that $\Gamma_\Omega$ is the energy form of the Neumann boundary problem on $\Omega$, but $P_t^\Omega$ is the heat semigroup of the Dirichlet boundary problem on $\Omega$. Generally they do not correspond to each other.

If $p_t^\Omega(x,dy)\ll d\mu(y)$, then we call the Radon-Nykodym derivative $p_t^\Omega(x,y)$ as the \emph{heat kernel} of $P_t^\Omega$. When $\Omega=M$, we simply write $P_t$ and $p_t(x,y)$ (if exists) instead of $P_t^M$ and $p_t^M(x,y)$, respectively called the (global) heat semigroup and heat kernel associated with the Dirichlet form $(\mathcal{E},\mathcal{F})$. We also define the Green operator
$$G^\Omega f:=\int_0^\infty P_t^\Omega fdt\quad\mbox{for all}\quad f\in L^2(\Omega).$$

Further, we define $\mathcal{F}'(\Omega)$ as the collection of all $v\in L_{loc}^1(M)$ such that there exist $w\in\mathcal{F}$ and a strict neighbourhood $\Omega'$ of $\Omega$ (that is, $K\cap\Omega\Subset\Omega'$ for any compact set $K$ in $M$) such that $v=w$ on $\Omega'$ and
$$\int_\Omega\int_M(v(x)-v(y))^2J(x,dy)d\mu(x)<\infty.$$
For all $v\in\mathcal{F}'(\Omega)$ and $\varphi\in\mathcal{F}(\Omega)$, define the extended Dirichlet form
$$\tilde{\mathcal{E}}(v,\varphi):=\mathcal{E}^{(L)}(w,\varphi)+\int_{M\times M}(\varphi(x)-\varphi(y))(v(x)-v(y))dj(x,y),$$
which is well-defined (that is, independent of the selection of $w$ and $\Omega'$) due to the strong locality of $\mathcal{E}^{(L)}$. The following facts are obvious:

\begin{itemize}
\item[i)]$\mathcal{F}(\Omega)\subset\mathcal{F}\subset\mathcal{F}'\subset\mathcal{F}'(\Omega)$, and $\tilde{\mathcal{E}}(v,\varphi)=\mathcal{E}(v,\varphi)$ whenever $v\in\mathcal{F}'$;

\item[ii)]$fg,f\wedge a,f+a\in\mathcal{F}'(\Omega)$ for any $f\in\mathcal{F}'(\Omega)$, $g\in\mathcal{F}'\cap L^\infty$ and $a\in\mathbb{R}$;

\item[iii)]$\tilde{\mathcal{E}}$ is $\mathbb{R}$-bilinear on ${F}'(\Omega)\times\mathcal{F}(\Omega)$.
\end{itemize}

Let $Q=(t_1,t_2]\times\Omega$ be an arbitrary cylinder in $\mathbb{R}\times M$, where $t_1<t_2$, and $\Omega$ is an open subset of $M$. For any function $u:(t_1,t_2]\to\mathcal{F}'(\Omega)$, let
$$\mathop{\mathrm{esup}}_Qu:=\sup\limits_{t_1<t\le t_2}\mathop{\mathrm{esup}}_\Omega u(t,\cdot),\quad\mathop{\mathrm{einf}}_Qu:=\inf\limits_{t_1<t\le t_2}\mathop{\mathrm{einf}}_\Omega u(t,\cdot)\quad\mbox{and}\quad\|u\|_Q=\sup\limits_{t_1<t\le t_2}\|u(t,\cdot)\|_{L^\infty(\Omega)}.$$

On $\mathbb{R}\times M$, let $\hat{\mu}$ be the completion of the product measure $m_1\otimes\mu$, where $m_1$ is the Lebesgue measure on $\mathbb{R}$. By \cite[Theorem 7.6.5]{bo}, $\hat{\mu}$ is a Radon measure with respect to the metric $\hat{d}\big((t,x),(t',x')\big):=|t-t'|\vee d(x,x').$

Given two open sets $U,\Omega$ in $M$ such that $U\Subset\Omega$, define the tail of a function $f$ with respect to $(U,\Omega)$ as
$$T_\Omega^U(f)=\mathop{\mathrm{esup}}\limits_{z\in U}\int_{\Omega^c}|f(y)|J(z,dy).$$
Set $Q=(t_1,t_2]\times\Omega$ with some $t_1<t_2$. For any $\hat{\mu}$-measurable function $u$ on $(t_1,t_2]\times M$, we write $T_\Omega^U(u(s,\cdot))$ simply as $T_\Omega^U(u;s)$, and define $T_Q^{U;p}(u)$, the $L^p$-tail of $u$ with respect to $(U,Q)$, as the $L^p$-norm of $T_\Omega^U(u;\cdot)$ on $(t_1,t_2]$, that is,
$$T_Q^{U;p}(u)=\left(\int_{t_1}^{t_2}\left(T_\Omega^U(u;s)\right)^pds\right)^{1/p}\quad(1\le p<\infty)\quad\quad\mbox{and}\quad\quad T_Q^{U;\infty}(u)=\mathop{\mathrm{esup}}\limits_{t_1<s\le t_2}T_\Omega^U(u;s).$$
For simplicity, we denote $T_Q^{U;1}(u)$ as $\overline{T}_Q^U(u)$. Further, if $\Omega=B$ and $U=\lambda B$ with some metric ball $B$ and $0<\lambda<1$, then we also write these tails as $T_B^{(\lambda)}(f)$, $T_Q^{(\lambda;p)}(u)$ and $\overline{T}_Q^{(\lambda)}(u)$ so that ``$B$'' appears only once.

We say $u:(t_1,t_2]\to\mathcal{F}'(\Omega)$ is \emph{caloric} (or \emph{subcaloric}, or \emph{supercaloric}) on $(t_1,t_2]\times\Omega$, if
\begin{itemize}
\item $u$ is $\hat{\mu}$-measurable on $(t_1,t_2]\times M$;
\item for any $\varphi\in L^2(\Omega)$, $I_\varphi(t):=(u(t,\cdot),\varphi)_{L^2(\Omega)}$ is continuous on $(t_1,t_2]$, and the left derivative $\frac{\mathrm{d}_-}{\mathrm{d}t}I_\varphi(t)$ exists at every $t_1<t\le t_2$. Further, if $\varphi\in\mathcal{F}(\Omega)$, then
\begin{equation}\label{cal}
\frac{\mathrm{d}_-}{\mathrm{d}t}\big(u(t,\cdot),\varphi\big)+\tilde{\mathcal{E}}(u(t,\cdot),\varphi)\quad=\quad\mbox{(or }\le\mbox{, or }\ge\mbox{)}\quad 0;
\end{equation}
\item there exist $t_1<s_0<\cdots<s_N=t_2$ (where $N<\infty$) such that the derivative $\frac{\mathrm{d}}{\mathrm{d}t}I_\varphi(t)$ exists for all $\varphi\in L^2(\Omega)$ at all $t\in(t_1,t_2]\setminus\{s_1,\dotsc,s_N\}$.
\end{itemize}

Note that here $t\mapsto(u(t,\cdot),\varphi)$ is piecewise differentiable, so that
\begin{equation}\label{cal'}
\big(u(t,\cdot),\varphi\big)-\lim_{s\downarrow t_1}\big(u(s,\cdot),\varphi\big)+\int_{t_1}^t\tilde{\mathcal{E}}(u(s,\cdot),\varphi)ds\quad=\quad\mbox{(or }\le\mbox{, or }\ge\mbox{)}\quad 0
\end{equation}
for all $\varphi\in\mathcal{F}(\Omega)$ and $t_1<t\le t_2$.

A function $f\in\mathcal{F}'(\Omega)$ is called \emph{(sub/super)harmonic} on $\Omega$, if $(t,x)\mapsto f(x)$ is (sub/super)caloric.

Note also if $u$ is (sub/super)caloric on $(t_1,t_2]\times\Omega$, then by the uniform boundedness theorem on $L^2(\Omega)$, there exists $v(t,\cdot)\in L^2(\Omega)$ for every $t_1<t\le t_2$ such that for all $\varphi\in L^2(\Omega)$,
$$\frac{\mathrm{d}_-}{\mathrm{d}t}(u(t,\cdot),\varphi)_{L^2(\Omega)}=(v(t,\cdot),\varphi)_{L^2(\Omega)}.$$
This way, we say $\partial_tu=v$ weakly in $\Omega$, and (\ref{cal}) is equivalently written as
$$(\partial_tu,\varphi)+\mathcal{E}(u,\varphi)\quad=\quad\mbox{(or }\le\mbox{, or }\ge\mbox{)}\quad 0.$$
On the other hand, if $t\mapsto(u(t,\cdot),\varphi)_{L^2(M)}$ is (piecewise) differentiable for all $\varphi\in L^2(M)$, then $u:(t_1,t_2]\to\mathcal{F}$ is always $\hat{\mu}$-measurable by Lemma \ref{meas}. Therefore, our definition contains \cite[Definition 2.8]{ghh+2} completely, and is more general than that. Additionally, in \cite{ckw-p} caloricity is defined in a stochastic way (we call $X$-caloric in Appendix \ref{prob}), which is greatly different with our analytic definition. A basic relation between (analytically) caloric and $X$-caloric functions is given in Proposition \ref{AIS}, but this theory is far from completed.

Let $W:M\times\mathbb{R}_+\to\mathbb{R}_+$ be a \emph{scaling function} (for short, a \emph{scale}), that is, given any $x\in M$, $W(x,\cdot)$ is strictly increasing, continuous, and there exist $C_W\ge 1$ and $0<\beta_1\le\beta_2<\infty$ such that for all $0<r\le R<\infty$ and all $x,y\in M$ with $d(x,y)\le R$,
\begin{equation}\label{W}
C_W^{-1}\left(\frac{R}{r}\right)^{\beta_1}\le\frac{W(x,R)}{W(y,r)}\le C_W\left(\frac{R}{r}\right)^{\beta_2}.
\end{equation}
It follows clearly that $W(x,0)=0$ and $W(x,R)\to\infty$ as $R\to\infty$ for any $x\in M$.

For a ball $B=B(x,r)$, we also denote $W(B):=W(x,r)$. For any $t\ge 0$, let $W^{-1}(x,t)$ be the unique $r\in\mathbb{R}_+$ such that $t=W(x,r)$. It follows by (\ref{W}) that for all $0<s\le t<\infty$ and $x\in M$,
\begin{equation}\label{W-1}
\left(\frac{t}{C_Ws}\right)^{\frac{1}{\beta_2}}\le\frac{W^{-1}(x,t)}{W^{-1}(x,s)}\le\left(\frac{C_Wt}{s}\right)^{\frac{1}{\beta_1}}.
\end{equation}

Fixing a scale $W$, we recall the following conditions on $J(x,dy)$:

\begin{itemize}
\item $(\mathrm{TJ})$, the tail estimate of jump measure: for every ball $B=B(x_0,R)$, $W(B)J(x_0,B^c)$ is uniformly bounded. Equivalently (since $B(x,(1-\lambda)R)\subset B$ whenever $x\in\lambda B$), for any $0<\lambda<1$, there exists a constant $C_\lambda>0$ such that for every ball $B=B(x_0,R)$,
$$\mathop{\mathrm{esup}}\limits_{x\in\lambda B}\int_{B^c}J(x,dy)\le\frac{C_\lambda}{W(B)}.$$

\item $(\mathrm{UJS})$, the upper jumping smoothness: the jump kernel $J:M\times M\to\mathbb{R}_+$ exists (that is, the measure $J(x,dy)=J(x,y)d\mu(y)$ for $\mu$-a.e.\ $x\in M$), and there exists $C>0$ such that for $\mu$-a.e.\ $x,y\in M$,
$$J(x,y)\le C\fint_{B(x,r)}J(z,y)d\mu(z)\quad\mbox{with all}\quad 0<r\le\frac{1}{2}d(x,y).$$

\item $(\mathrm{J}_\le)$, the upper bound of jump kernel: the jump kernel $J(x,y)$ exists, and there exists $C>0$ such that
$$J(x,y)\le\frac{C}{V(x,d(x,y))W(x,d(x,y))}\quad\mu\mbox{-a.e.\ }\ x,y\in M.$$
\end{itemize}

This paper involves $(\mathrm{TJ})$ only, which is a relatively weak one among these three conditions since $(\mathrm{J}_\le)\Rightarrow(\mathrm{TJ})$ under $(\mathrm{VD})$ by \cite[Proposition 3.1]{ghh+2}. On the contrary, $(\mathrm{TJ})+(\mathrm{UJS})\Rightarrow(\mathrm{J}_\le)$ by \cite[Proof of Corollary 3.4]{ckw-p}.

Recall also the following conditions on $(\mathcal{E},\mathcal{F})$ (cf.\ \cite{ghh+3,ghh+4}):

\begin{itemize}
\item $(\mathrm{cap}_\le)$, the upper bound of capacity: for any $0<\lambda<1$, there exists $C_\lambda>0$ such that for every ball $B=B(x_0,R)$ with $R\in(0,\overline{R})$, there exists $\phi\in\mathrm{cutoff}(\lambda B,B)$ such that
$$\mathcal{E}(\phi,\phi)\le C_\lambda\frac{\mu(B)}{W(B)}.$$

\item $(\mathrm{Gcap})$, the generalized capacity condition: there exist $C>0$ and $k\ge 1$ such that for all $B_0=B(x,R)$ and $B=B(x,R+r)$ with $0<R<R+r<\overline{R}$, for all $u\in\mathcal{F}'\cap L^\infty$, there exists $\phi\in\mathcal{F}\cap C_0(B)$ such that $0\le\phi\le k$ on $M$, $\phi\ge 1$ on $B_0$ and
$$\mathcal{E}\left(\phi,u^2\phi\right)\le\frac{C}{\inf\limits_{y\in B}W(y,r)}\int_Bu^2d\mu.$$

\item $(\mathrm{ABB})$, the Andres-Barlow-Bass condition (also denoted as $(\mathrm{CSJ})$ in other literature): there exist $C>0$ and $\zeta\ge 0$ such that for all balls $B_0=B(x_0,R),B=B(x_0,R+r)$ and $\Omega=B(x_0,R')$ with $0<R<R+r<R'<\overline{R}$, every $f\in\mathcal{F}'\cap L^\infty$, there exists $\phi\in\mathrm{cutoff}(B_0,B)$ such that
$$\int_\Omega f^2d\Gamma_\Omega(\phi)\le\zeta\int_B\phi^2d\Gamma_B(f)+\frac{C}{\inf\limits_{y\in\Omega}W(y,r)}\int_\Omega f^2d\mu.$$

\item $(\mathrm{PI})$, the (weak) Poincar\'e inequality: there exist $C>0$ and $\lambda_0\in(0,1]$ such that for every ball $B=B(x_0,R)$ with $R\in(0,\overline{R})$ and all $f\in\mathcal{F}'\cap L^\infty$,
$$\fint_{\lambda_0B}\fint_{\lambda_0B}(f(x)-f(y))^2d\mu(x)\mu(y)\le CW(B)\int_Bd\Gamma_B(f).$$

\item $(\mathrm{FK})$, the Faber-Krahn inequality: there exist $C,\nu>0$ and $\sigma\in(0,1)$ such that for every ball $B=B(x_0,R)$ with $R\in(0,\sigma\overline{R})$ and every open subset $U\subset B$,
$$\lambda_1(U):=\inf\limits_{f\in\mathcal{F}(U)\setminus\{0\}}\frac{\mathcal{E}(f,f)}{(f,f)}\ge\frac{C^{-1}}{W(B)}\left(\frac{\mu(B)}{\mu(U)}\right)^\nu.$$

\item $(\mathrm{E})$, the exit time estimate: there exist $C>0$ and $\sigma\in(0,1)$ such that for every ball $B=B(x_0,R)$ with $R\in(0,\overline{R})$, for $\mu$-a.e.\ $y\in B$,
\begin{align}
G^B1_B(y)\le CW(B),&\quad\quad\mbox{if}\quad R<\sigma\overline{R};\label{E-l}\\
G^B1_B(y)\ge C^{-1}W(B),&\quad\quad\mbox{if}\quad y\in\frac{1}{4}B.\label{E-g}
\end{align}
We also say $(\mathrm{E}_\le)$ (or $(\mathrm{E}_\ge)$) holds, if (\ref{E-l}) (or (\ref{E-g})) holds.

\item $(\mathrm{S}^+)$, the strong survival estimate: there exists $C>0$ such that for every ball $B$ and all $t>0$,
$$1-P_t^B1_B\le\frac{Ct}{W(B)}\quad\quad\mbox{on}\quad\frac{1}{4}B.$$
Note that $(\mathrm{S}^+)$ is stronger than the condition $(\mathrm{S}_+)$ in \cite{ghh+3} (where the first term is not ``1'' but a general $\varepsilon\le 1$).

\item $(\mathrm{S}^-)$, the weak survival estimate: there exist $\varepsilon,\delta'>0$ such that for every ball $B=B(x_0,R)$ with $0<R<\overline{R}$ and all $0<t\le\delta'W(B)$,
$$1-P_t^B1_B\le\varepsilon\quad\quad\mbox{on}\quad\frac{1}{4}B.$$

\item $(\mathrm{C})$, conservativeness (or stochastic completeness): $P_t1=1$ for all $t>0$.

\item $(\mathrm{Nash}^+)$, the global Nash inequality: there exist $C,\nu>0$ such that the heat kernel $p_t^B(x,y)$ exists for every ball $B=B(x_0,R)$ with $R\in(0,\overline{R})$, and for all $t>0$,
$$p_t^B(x,y)\le\frac{C}{\mu(B)}\left(\frac{W(B)}{t}\right)^{1/\nu}\exp\left(\frac{Ct}{W(x_0,\overline{R})}\right)\quad\mu\mbox{-a.e.\ }\ x,y\in B.$$

\item $(\mathrm{Nash}^-)$, the local Nash inequality: there exist $C,\nu>0$ and $\sigma\in(0,1)$ such that $p_t^B(x,y)$ exists for every ball $B=B(x_0,R)$ with $R\in(0,\sigma\overline{R})$, and for all $t>0$,
$$p_t^B(x,y)\le\frac{C}{\mu(B)}\left(\frac{W(B)}{t}\right)^{1/\nu}\quad\mu\mbox{-a.e.\ }\ x,y\in B.$$

Obviously $(\mathrm{Nash}^+)$ and $(\mathrm{Nash}^-)$ are identical on an unbounded space (i.e., when $\overline{R}=\infty$).

\item $(\mathrm{LLE})$, the local lower estimate (of the heat kernel) (also denoted as $(\mathrm{NDL})$ in other literature): there exist $\delta_L,c_L\in(0,1)$ such that for every ball $B=B(x_0,R)$ with $R\in(0,\overline{R})$, the heat kernel $p_t^B(x,y)$ exists, and for all $0<t\le W(\delta_LB)$,
$$p_t^B(x,y)\ge\frac{c_L}{V\left(x_0,W^{-1}(x_0,t)\right)}\quad\mu\mbox{-a.e.\ }\ x,y\in B\left(x_0,\delta_LW^{-1}(x_0,t)\right).$$

\item $(\mathrm{PHR})$, the parabolic H\"older regularity: there exist $\delta,\gamma\in(0,1)$ and $C>0$ such that for every ball $B=B(x_0,R)$ with $R\in(0,\overline{R})$, every caloric function $u$ on $Q=(t_0-W(B),t_0]\times B$, all $s,t\in(t_0-W(\delta B),t_0]$ and $\mu$-a.e.\ $x,y\in\delta B$,
\begin{equation}\label{phr}
|u(t,x)-u(s,y)|\le C\left(\frac{d(x,y)\vee W^{-1}(y,|s-t|)}{R}\right)^\gamma\left(\|u\|_Q+W(B)^{1-\frac{1}{p}}T_Q^{(\frac{2}{3};p)}(u)\right).
\end{equation}
We say $(\mathrm{PHR}^0)$ holds, if (\ref{phr}) holds with $\|u\|_Q+W(B)^{1-\frac{1}{p}}T_Q^{(\frac{2}{3};p)}(u)$ replaced by $\|u\|_{(t_0-W(B),t_0]\times M}$.

\item $(\mathrm{EHR})$, the elliptic H\"older regularity: there exist $\delta,\gamma\in(0,1)$ and $C>0$ such that for every ball $B=B(x_0,R)$ with $R\in(0,\overline{R})$, every harmonic function $f$ on $B$ and $\mu$-a.e.\ $x,y\in\delta B$,
\begin{equation}\label{ehr}
|f(x)-f(y)|\le C\left(\frac{d(x,y)}{R}\right)^\gamma\left(\|f\|_{L^\infty(B)}+W(B)T_B^{(2/3)}(f)\right).
\end{equation}
We say $(\mathrm{EHR}^0)$ holds, if (\ref{ehr}) holds with $\|f\|_{L^\infty(B)}+W(B)T_B^{(2/3)}(f)$ replaced by $\|f\|_{L^\infty(M)}$.
\end{itemize}

Clearly $(\mathrm{S}^+)\Rightarrow(\mathrm{S}^-)$, $(\mathrm{Gcap})\Rightarrow(\mathrm{cap}_\le)$, $(\mathrm{PHR}^+)\Rightarrow(\mathrm{EHR}^+)$ and $(\mathrm{PHR})\Rightarrow(\mathrm{EHR})$. Further, since $(\mathrm{VD})$ and $(\mathrm{RVD})$ hold, recall the following implications:
\begin{align}
(\mathrm{PI})\Rightarrow(\mathrm{Nash}^+)\Rightarrow(\mathrm{FK})&\ \ \mbox{by \cite[Lemma 4.11, Propositions 4.7 and 4.13]{ghh+3}}\label{4-11}\\
(\mathrm{LLE})\Rightarrow(\mathrm{PI})+(\mathrm{S}^-)&\ \ \mbox{by \cite[Proposition 4.7, Lemmas 7.12 and 7.14]{ghh+3}}\label{vLE}\\
(\mathrm{E})\Rightarrow(\mathrm{S}^-)&\ \ \mbox{by \cite[Proposition 13.4]{ghh+1}}\label{13-4}
\end{align}

Note that $(\mathrm{S}^-)$ is weaker than the condition $(\mathrm{S})$ in \cite{ghh+3} (where $\varepsilon$ could be arbitrarily taken), but the properties like \cite[Corollary 8.9]{ghh+3} and \cite[Lemma 13.5]{ghh+1} still hold (according to the proofs themselves). Combining \cite[Notes on Page 118]{ghh+1}, there is
\begin{equation}\label{s-c}
(\mathrm{S}^-)\Rightarrow(\mathrm{C})+(\mathrm{Gcap}).
\end{equation}

With $0<q\le 1$, define the following Harnack inequalities and growth lemmas (meanwhile, we say $(\bullet)$ holds, if $(\bullet_q)$ holds with some $0<q\le 1$):

\begin{itemize}
\item $(\mathrm{PHI})$, the parabolic Harnack inequality: there exist $\delta>0$, $C>1$ and $\lambda_0\in(0,1)$ such that for every ball $B=B(x_0,R)$ with $R\in(0,\overline{R})$ and all $u:(t_0-4\delta W(B),t_0]\to\mathcal{F}'(B)$ that is non-negative, bounded and caloric on $Q=(t_0-4\delta W(B),t_0]\times B$,
\begin{equation}\label{phi}
\mathop{\mathrm{esup}}_{(t_0-3\delta W(B),t_0-2\delta W(B)]\times\lambda_0B}u\le C\left(\mathop{\mathrm{einf}}_{(t_0-\delta W(B),t_0]\times\lambda_0B}u+\overline{T}_Q^{(3/4)}(u_-)\right).
\end{equation}
We say $(\mathrm{PHI}^0)$ holds, if (\ref{phi}) holds only for $u$ that is non-negative on $(t_0-4\delta W(B),t_0]\times M$.

\item $(\mathrm{EHI})$, the elliptic Harnack inequality: there exist $C>1$ and $\lambda_0\in(0,1)$ such that for every ball $B=B(x_0,R)$ with $R\in(0,\overline{R})$ and all $f\in\mathcal{F}'(B)$ that is non-negative, bounded and harmonic on $B$,
\begin{equation}\label{ehi}
\mathop{\mathrm{esup}}_{\lambda_0B}f\le C\left(\mathop{\mathrm{einf}}_{\lambda_0B}f+W(B)T_B^{(3/4)}(f_-)\right).
\end{equation}
We say $(\mathrm{EHI}^0)$ holds, if (\ref{ehi}) holds only for $f$ that is non-negative on $M$.

\item $(\mathrm{wPH}_q)$, the $q$-weak parabolic Harnack inequality: there exist $\delta>0$, $C>1$ and $\lambda_0\in(0,1)$ such that for every ball $B=B(x_0,R)$ with $R\in(0,\overline{R})$, all $0<\lambda\le\lambda_0$ and all $u$ that is non-negative, bounded and supercaloric on $Q=(t_0-4\delta W(\lambda B),t_0]\times B$,
\begin{equation}\label{wph'q}
\left(\fint_{(t_0-3\delta W(\lambda B),t_0-2\delta W(\lambda B)]\times\lambda B}u^qd\hat{\mu}\right)^{1/q}\le C\left(\mathop{\mathrm{einf}}_{(t_0-\delta W(\lambda B),t_0]\times\lambda B}u+\overline{T}_Q^{(2/3)}(u_-)\right).
\end{equation}
We say $(\mathrm{wPH}_q^0)$ holds, if (\ref{wph'q}) holds only for $u$ that is non-negative on $(t_0-4\delta W(B),t_0]\times M$; say $(\mathrm{wPH}_q^-)$ holds, if it only holds for caloric and globally non-negative $u$.

\item $(\mathrm{wEH}_q)$, the $q$-weak elliptic Harnack inequality: there exist $C>1$ and $\lambda_0\in(0,1)$ such that for every ball $B=B(x_0,R)$ with $R\in(0,\overline{R})$, every $0<\lambda\le\lambda_0$ and all non-negative, bounded and superharmonic $f$ on $B$,
\begin{equation}\label{weh'q}
\left(\fint_{\lambda B}f^qd\mu\right)^{1/q}\le C\left(\mathop{\mathrm{einf}}_{\lambda B}f+W(\lambda B)T_B^{(2/3)}(f_-)\right).
\end{equation}
Define properties $(\mathrm{wEH}_q^0)$ and $(\mathrm{wEH}_q^-)$ similarly as in the parabolic case.

\item $(\mathrm{PGL}_q)$, the $q$-parabolic growth lemma: there exist $\delta>0$, $C>1$ and $\lambda_0\in(0,1)$ such that for every ball $B=B(x_0,R)$ with $R\in(0,\overline{R})$, all $0<\lambda\le\lambda_0$ and all $u$ that is non-negative, bounded and supercaloric on $Q=(t_0-4\delta W(\lambda B),t_0]\times B$, if
$$\frac{\hat{\mu}\big(Q_-\cap\{u\ge a\}\big)}{\hat{\mu}(Q_-)}\ge\eta$$
with some $a,\eta>0$, where $Q_-=(t_0-3\delta W(\lambda B),t_0-2\delta W(\lambda B)]\times\lambda B$, then
$$\mathop{\mathrm{einf}}_{(t_0-\delta W(\lambda B),t_0]\times\lambda B}u\ge C^{-1}\eta^{1/q}a-2\overline{T}_Q^{(2/3)}(u_-).$$

\item $(\mathrm{EGL}_q)$, the $q$-elliptic growth lemma: there exist $C>1$ and $\lambda_0\in(0,1)$ such that for every ball $B=B(x_0,R)$ with $R\in(0,\overline{R})$, all $0<\lambda\le\lambda_0$ and $f$ that is non-negative, bounded and superharmonic on $B$, if
$$\frac{\mu\big(\lambda B\cap\{f\ge a\}\big)}{\mu(\lambda B)}\ge\eta$$
with some $a,\eta>0$, then
$$\mathop{\mathrm{einf}}_{\lambda B}f\ge C^{-1}\eta^{1/q}a-W(\lambda B)T_B^{(2/3)}(f_-).$$
\end{itemize}

Clearly $(\mathrm{PHI})\Rightarrow(\mathrm{EHI})$, $(\mathrm{wPH}_q)\Rightarrow(\mathrm{wEH}_q)$, $(\mathrm{PGL}_q)\Rightarrow(\mathrm{EGL}_q)$, and $(\bullet_q)\Rightarrow(\bullet_{q'})$ for all $0<q'<q\le 1$. Note that the ``lemma of growth'' $(\mathrm{LG})$ in \cite{ghh+1} is weaker than $(\mathrm{EGL}_q)$ with any $q>0$.

Trivially $(\bullet)\Rightarrow(\bullet^0)$ and $(\bullet^0)\Rightarrow(\bullet^-)$. On the converse, we show in Propositions \ref{wph+t}, \ref{phi+t} and \ref{ehi+t} that $(\bullet^0)\Rightarrow(\bullet)$ under proper assumptions. Note that in $(\bullet^0)$ and $(\bullet^-)$, the tail terms vanish since $u_-$ or $f_-$ is constantly $0$.

Now we introduce our main theorems:

\begin{theorem}\label{M1}
Assume that $(M,d,\mu)$ satisfies $(\mathrm{VD})$ and $(\mathrm{RVD})$, and $(\mathcal{E},\mathcal{F})$ is a regular Dirichlet form such that $dj(x,y)=J(x,dy)d\mu(x)$ and $\kappa\equiv 0$. Then the following are equivalent, each implying $(\mathrm{S}^-)$:
$$\mathrm{(i)}\ \ (\mathrm{wPH}_1)\quad\quad\mathrm{(ii)}\ \ (\mathrm{wPH})+(\mathrm{Nash}^-)\quad\quad\mathrm{(iii)}\ \ (\mathrm{PGL})+(\mathrm{FK})\quad\quad\mathrm{(iv)}\ \ (\mathrm{LLE}).$$
\end{theorem}

\begin{proof}
Since $(\mathrm{wPH})\Rightarrow(\mathrm{S}^-)$ by Lemma \ref{s-}, it suffices to prove the equivalences.

On one hand, $\mathrm{(iv)}\Rightarrow\mathrm{(i)}+\mathrm{(ii)}$ by Proposition \ref{w_D} and (\ref{vLE}); on the other hand, $\mathrm{(i)}\Rightarrow\mathrm{(iv)}$ and $\mathrm{(ii)}\Rightarrow\mathrm{(iv)}$ by Proposition \ref{lle}. Thus $\mathrm{(i)}\Leftrightarrow\mathrm{(ii)}\Leftrightarrow\mathrm{(iv)}$.

Finally, $\mathrm{(ii)}\Leftrightarrow\mathrm{(iii)}$ by Proposition \ref{wph} and Lemma \ref{nash}, completing the proof.
\end{proof}

Indeed, by Lemma \ref{nash}, $(\mathrm{PGL})+(\mathrm{Nash}^-)$ and $(\mathrm{wPH})+(\mathrm{FK})$ are also equivalent to these conditions.

\begin{theorem}\label{M2}
Under the same assumptions as in Theorem \ref{M1}, if $(\mathrm{TJ})$ holds, then the conditions $(\mathrm{i})$--$(\mathrm{iv})$ are also equivalent to each of the following conditions:
\begin{longtable}{clcclccl}
$\mathrm{(i)'}$&$(\mathrm{wEH})+(\mathrm{cap}_\le)+(\mathrm{FK})$&\quad&$\mathrm{(ii)'}$&$(\mathrm{PHR})+(\mathrm{E})$&\quad&$\mathrm{(iii)'}$&$(\mathrm{PI})+(\mathrm{ABB})$\\
$\mathrm{(i)''}$&$(\mathrm{wEH})+(\mathrm{E})$&&$\mathrm{(ii)''}$&$(\mathrm{EHR}^0)+(\mathrm{E})$&&$\mathrm{(iii)''}$&$(\mathrm{PI})+(\mathrm{Gcap})$
\end{longtable}
\end{theorem}

Trivially $(\mathrm{wPH})+(\mathrm{E})$, $(\mathrm{EHR})+(\mathrm{E})$ and $(\mathrm{PHR}^0)+(\mathrm{E})$ are also equivalent to these conditions.

\begin{proof}
$(\mathrm{wPH})+(\mathrm{S}^-)+(\mathrm{FK})\Rightarrow\mathrm{(i)'}$ by (\ref{s-c}); under $(\mathrm{TJ})$, $\mathrm{(i)'}\Rightarrow\mathrm{(i)''}$ by \cite[Lemma 6.3]{hy}; $\mathrm{(i)''}\Rightarrow\mathrm{(ii)''}$ by \cite[Lemma 6.2]{hy}; $\mathrm{(ii)''}\Rightarrow\mathrm{(iv)}$ by Proposition \ref{e-lle}. Combining Theorem \ref{M1}, we obtain $\mathrm{(ii)}\Leftrightarrow\mathrm{(i)'}\Leftrightarrow\mathrm{(i)''}\Leftrightarrow\mathrm{(ii)''}$.

$\mathrm{(iii)}+\mathrm{(iv)}\Rightarrow\mathrm{(ii)'}$ by Lemma \ref{L1-E} and Proposition \ref{v_D}; $\mathrm{(ii)'}\Rightarrow\mathrm{(ii)''}$ by Lemma \ref{in-hr} and the trivial implication $(\mathrm{PHR}^0)\Rightarrow(\mathrm{EHR}^0)$. Thus $\mathrm{(ii)''}$ is also equivalent to the conditions above.

Finally, $\mathrm{(iv)}\Leftrightarrow\mathrm{(iii)'}\Leftrightarrow\mathrm{(iii)''}$ is given by \cite[Theorem 2.10]{ghh+3}.
\end{proof}

Let us compare these results with the known ones (see \cite[Theorem 1.20]{ckw-e} and \cite[Theorem 1.18]{ckw-p} for the case $\mathcal{E}^{(L)}\equiv 0$; see \cite[Theorem 2.15]{bbk}, \cite[Theorem 3.1]{bgk} and \cite[Theorem 1.2]{ghl} for the case $J\equiv 0$):

\begin{proposition}\label{1-18}
Assume that $\overline{R}=\infty$ and $W(x,r)=\phi(r)$ with some function $\phi$. For $X$-caloric functions,
\begin{itemize}
\item[$(\mathrm{i})$] if $\mathcal{E}^{(L)}\equiv 0$, then
\begin{align*}
(\mathrm{PHI}^0)\Leftrightarrow\ &(\mathrm{EHI}^0)+(\mathrm{E})+(\mathrm{UJS})\Leftrightarrow(\mathrm{PHR}^0)+(\mathrm{E})+(\mathrm{UJS})\Leftrightarrow(\mathrm{EHR}^0)+(\mathrm{E})+(\mathrm{UJS})\\
\Leftrightarrow\ &(\mathrm{LLE})+(\mathrm{UJS})\Leftrightarrow(\mathrm{PI})+(\mathrm{ABB})+(\mathrm{J}_\le)+(\mathrm{UJS});
\end{align*}
\item[$(\mathrm{ii})$] if $J\equiv 0$, then $(\mathrm{PHI}^0)\Leftrightarrow(\mathrm{EHI}^0)+(\mathrm{E})\Leftrightarrow(\mathrm{LLE})\Leftrightarrow(\mathrm{PI})+(\mathrm{ABB})$.
\end{itemize}
\end{proposition}

Clearly Theorems \ref{M1} and \ref{M2} serve as a generalization of this proposition, with the weak assumption $(\mathrm{TJ})$ replacing the strong $(\mathrm{UJS})$, and weak Harnack inequalities replacing the strong ones.

On the other hand, let us recall a counterexample collected in \cite[Example 5.4]{ckw-p}, showing that our equivalent conditions can not contain any kind of strong Harnack inequality:

\begin{example}\label{BC}
Let $(M,d,\mu)=(\mathbb{R}^3,|\cdot|,m_3)$ be the 3-dimensional Euclidean space with Lebesgue measure. For any $0<\beta<2$, the jump form defined by
$$J(x,dy)=\frac{\delta_{x_1}(dy_1)\delta_{x_2}(dy_2)dy_3+\delta_{x_2}(dy_2)\delta_{x_3}(dy_3)dy_1+\delta_{x_3}(dy_3)\delta_{x_1}(dy_1)dy_2}{|x-y|^{1+\beta}}$$
for all $x=(x_1,x_2,x_3)$ and $y=(y_1,y_2,y_3)$ in $\mathbb{R}^3$. is regular. Clearly $(\mathrm{TJ})$ holds. Further, $(\mathrm{E})$ holds by \cite[Proposition 2.1]{bc}, and $(\mathrm{EHR}^0)$ holds by \cite[Theorem 2.9]{bc}. Therefore, all the equivalent conditions $\mathrm{(i)}$ -- $\mathrm{(iv)}$, $\mathrm{(i)'}$ -- $\mathrm{(iii)'}$ and $\mathrm{(i)''}$ -- $\mathrm{(iii)''}$ hold. Meanwhile, $(\mathrm{EHI}^0)$ fails according to \cite[Pages 501-502]{bc}.
\end{example}

A similar construction gives an example of a mixed form with scale $t=r^2$. Recall that on the Sierpi\'nski gasket $\mathcal{SG}\subset\mathbb{R}^2$, the $\log_35$-dimensional Hausdorff measure $m_\ast$ is Ahlfors regular, and for each $0<\beta<\log_25$, a jump kernel $J(x,y)\asymp d(x,y)^{-\log_35-\beta}$ defines a (stable-like) regular Dirichlet form.

\begin{example}
Consider $M=[0,1]\times\mathcal{SG}$ with metric measure structure
$$d(x,y)=\max\big\{|x_1-y_1|,|x_2-y_2|\big\},\quad d\mu(x)=dx_1dm_\ast(x_2)$$
for all $x=(x_1,x_2),y=(y_1,y_2)\in M$. Let
$$\mathcal{E}(f,f)=\int_M\left|\frac{\partial f}{\partial x_1}\right|^2d\mu+\int_M\int_{\mathcal{SG}}\frac{(f(x_1,x_2)-f(x_1,y_2))^2}{|x_2-y_2|^{\log_35+2}}dm_\ast(y_2)d\mu(x_1,x_2),$$
so that it contains non-trivial strongly local and jump parts. Using the same method as for Example \ref{BC} we could see $(\mathrm{TJ})$, $(\mathrm{E})$ and $(\mathrm{EHR}^0)$ hold with the same scale. Hence all the equivalent conditions in Theorems \ref{M1} and \ref{M2} hold.
\end{example}

Note that the assumptions $\mathcal{E}^{(L)}\equiv 0$ and $\overline{R}=\infty$ are not essential in \cite{ckw-e,ckw-p} (by the proofs themselves); while if $W$ depends on $x$, then we could obtain the same results by taking a new metric $d_\ast$ (see Section \ref{ctn}) so that all the involved conditions are equivalently expressed under $d_\ast$ with scale $\phi(r)=r^\beta$. Further, the analytic $(\mathrm{PHI}^0)$ follows from the stochastic one with the help of Propositions \ref{sp-c} and \ref{AIS}, so that $(\mathrm{LLE})+(\mathrm{UJS})\Rightarrow(\mathrm{PHI}^0)$ holds in the analytic sense accordingly to Proposition \ref{1-18}. This implication can also be proved directly by analysis, for example, by reproducing the proof of \cite[Theorem 4.3]{ckw-p} in an analytic manner (based on the constructions $u_s^{U;\Omega}$ and $u_{t_1,U}$ below). Alternatively, we could follow Moser's original idea, that is, to combine $(\mathrm{wPH})$ with a parabolic mean-value inequality that sharpens \cite[Theorem 2.10]{ghh+2} under condition $(\mathrm{UJS})$. On the converse, the proof of $(\mathrm{PHI}^0)\Rightarrow(\mathrm{LLE})+(\mathrm{UJS})$ in \cite[Section 3]{ckw-p} involves only the heat kernel and a caloric extension (see Section \ref{Tail}). Hence it is not hard to show Proposition \ref{1-18} in pure analysis in our general setting. However, these steps are technically very complicated, for which reason we would split this part into a subsequent paper.

The following consequence is especially interesting:

\begin{corollary}\label{M4}
Under the same assumptions,
$$(\mathrm{PHI})\Leftrightarrow(\mathrm{wPH}_1)+(\mathrm{UJS})\Leftrightarrow(\mathrm{wPH})+(\mathrm{FK})+(\mathrm{UJS}).$$
\end{corollary}

Since $(\mathrm{UJS})$ is trivial in the following cases:
\begin{itemize}
\item[$\mathrm{i)}$] any strongly local form. In particular, an arbitrary second-order uniformly elliptic operator of divergence form on a Riemannian manifold;

\item[$\mathrm{ii)}$] any subordinated (sub-)Gaussian diffusion with any L\'evy measure (see \cite[Theorem 2.3]{lm2}). In particular, any $\alpha$-stable process with $0<\alpha<2$ (for example, $(-\Delta)^{-\alpha/2}$) on $\mathbb{R}^d$,
\end{itemize}
then $(\mathrm{wPH}_1)$ is actually the same as $(\mathrm{PHI})$, while $(\mathrm{wPH})$ differs only by $(\mathrm{FK})$. Note that $(\mathrm{FK})$ is equivalent to the Sobolev embedding theorem in Case $\mathrm{i)}$ by \cite[Exercises in Section 14.3]{gri2}, while Case $\mathrm{ii)}$ is similar (with respect to Besov spaces) with the help of Lemma \ref{nash} below.

In the sequel, $C$ always stands for a positive constant (independent of the variables in the context but possibly depending on parameters), whose value may change from line to line, but is typically greater than 1. When necessary, we refer to the constant $C$ coming from condition $(\bullet)$ by $C_\bullet$ so that its value is fixed once appearing.

\section{Proof of Theorem \ref{M1}}\label{PM1}

To begin with, we give an equivalent expression of $(\mathrm{LLE})$, which is much easier to verify:

\begin{lemma}\label{LE1}
$(\mathrm{LLE})$ is equivalent to the following condition $(\mathrm{LLE}^-)$: there exist $\delta>0$ and $c,\sigma\in(0,1)$ such that for every ball $B=B(x_0,R)$ with $R\in(0,\sigma\overline{R})$ and all $0<t\le W(\delta B)$,
\begin{equation}\label{wLL}
p_t^B(x,dy)\ge\frac{cd\mu(y)}{V\left(x_0,W^{-1}(x_0,t)\right)}\quad\mu\mbox{-a.e.\ }\ x\in B'
\end{equation}
as a measure on $B'$, where $B':=B(x_0,\delta W^{-1}(x_0,t))$.
\end{lemma}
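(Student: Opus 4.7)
The direction $(\mathrm{LLE})\Rightarrow(\mathrm{LLE}_-)$ is immediate: I integrate the pointwise density lower bound against $d\mu(y)$ over $B'$, keeping $\delta,c$ from $(\mathrm{LLE})$'s $\delta_L,c_L$ and choosing any $\sigma\in(0,1)$.

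For the converse $(\mathrm{LLE}_-)\Rightarrow(\mathrm{LLE})$, the strategy is to compose the semigroup at the half-time via $P_t^B=P_{t/2}^B\circ P_{t/2}^B$ and exploit the $L^2$-symmetry of the Dirichlet form. I first choose $\delta_L<\delta$ small enough (via the scale bound (\ref{W-1})) so that $B'_L(t):=B(x_0,\delta_LW^{-1}(x_0,t))$ is contained in $B_\delta(t/2):=B(x_0,\delta W^{-1}(x_0,t/2))$ for all $t\le W(\delta_LB)$. For a fixed $x\in B'_L(t)$ and a Borel set $A\subset B'_L(t)$, Chapman-Kolmogorov gives
$$p_t^B(x,A)=\int_Bp_{t/2}^B(x,dz)\,p_{t/2}^B(z,A);$$
restricting $z$ to $B_\delta(t/2)$ and applying $(\mathrm{LLE}_-)$ to $p_{t/2}^B(x,\cdot)$ yields a lower bound of the form $(c/V_{t/2})\int_{B_\delta(t/2)}d\mu(z)\,p_{t/2}^B(z,A)$, where $V_{t/2}:=V(x_0,W^{-1}(x_0,t/2))$. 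The $L^2$-symmetry of $P_{t/2}^B$ converts this to $(c/V_{t/2})\int_AP_{t/2}^B1_{B_\delta(t/2)}(y)\,d\mu(y)$, an absolutely continuous measure in $A$. A second application of $(\mathrm{LLE}_-)$ to bound $P_{t/2}^B1_{B_\delta(t/2)}(y)$ from below for $y\in A\subset B_\delta(t/2)$, combined with volume doubling $\mu(B_\delta(t/2))\simeq V_{t/2}\simeq V(x_0,W^{-1}(x_0,t))$, produces the desired density bound and simultaneously establishes that $p_t^B(x,y)$ exists on $B'_L(t)\times B'_L(t)$.

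To remove the restriction $R<\sigma\bar R$ (present in $(\mathrm{LLE}_-)$ but not in $(\mathrm{LLE})$), I would invoke the monotonicity $p_t^B\ge p_t^{B_0}$ for any concentric sub-ball $B_0\subset B$ and apply the preceding step to $B_0$ of radius $r_0\asymp\min(R,\sigma\bar R)/2$; a further (dimensional) reduction of $\delta_L$ guarantees $B'_L(t)\subset B_0$ for all admissible $t$.

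The main obstacle is the density-existence point: $(\mathrm{LLE}_-)$ gives only a measure lower bound, leaving open the possibility of a singular component in $p_t^B(x,\cdot)$. The $L^2$-symmetry step is exactly what forces the composed transition to have an absolutely continuous part dominating $c_L\,d\mu/V$ on the relevant region, thereby bypassing any additional regularity assumption on the heat kernel beyond what the Dirichlet form setting automatically provides.
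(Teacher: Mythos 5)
Your argument for the density lower bound via Chapman--Kolmogorov and $L^2$-symmetry is genuinely different from the paper's route and, for the lower-bound component, it does work: choosing $\delta_L$ small via \eqref{W-1} so that $B(x_0,\delta_L W^{-1}(x_0,t))\subset B(x_0,\delta W^{-1}(x_0,t/2))$, the composition at half-time together with symmetry and $(\mathrm{VD})$ does force the absolutely continuous part of $p_t^B(x,\cdot)$ to dominate $c\,d\mu/V(x_0,W^{-1}(x_0,t))$ on the small ball.

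However, there is a genuine gap: $(\mathrm{LLE})$ asserts that \emph{the heat kernel $p_t^B(x,y)$ exists}, i.e.\ that $p_t^B(x,\cdot)\ll\mu$ as a measure on $B$, and your argument never rules out a singular component of $p_t^B(x,\cdot)$. The Chapman--Kolmogorov--plus--symmetry step produces only a lower bound on the absolutely continuous part restricted to $B'_L(t)$; a lower bound can never certify absolute continuity, which is inherently an upper-bound/ultracontractivity-type assertion, and moreover $(\mathrm{LLE})$ demands the kernel on all of $B$, not merely on the sub-ball. The phrase in your last paragraph --- that the symmetry step forces an absolutely continuous part dominating the required density, ``thereby bypassing any additional regularity assumption'' --- is precisely where the argument overreaches. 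The paper handles this missing step by a completely different mechanism: it first extends the measure lower bound to all balls via $p_t^B\ge p_t^{\sigma B}$ and a suitable choice of $\delta_L$, then feeds it into (a measure-adapted version of) the proof of Poincar\'e inequality $(\mathrm{PI})$ from \cite[Lemma 7.12]{ghh+3}, and then invokes $(\mathrm{PI})\Rightarrow(\mathrm{Nash}^+)$ via \eqref{4-11}. It is the Nash inequality (i.e.\ $L^1\to L^\infty$ ultracontractivity of $P_t^B$) that provides both the existence of $p_t^B(x,y)$ on the whole ball and a pointwise upper bound; only after that can the measure lower bound be read off as a density lower bound. To make your route work you would still need some analogous mechanism --- e.g.\ first proving a Faber--Krahn or Nash inequality from $(\mathrm{LLE}_-)$ --- to justify kernel existence, at which point your composition argument becomes a (pleasant) alternative for the lower bound but not a shortcut around the regularity issue.
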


\begin{proof}
Trivially $(\mathrm{LLE})\Rightarrow(\mathrm{LLE}^-)$. On the converse, assume that $(\mathrm{LLE}^-)$ holds, and set $\delta_L=\delta\sigma$. Then, for every ball $B=B(x_0,R)$ with $0<R<\overline{R}$ and all $0<t\le W(\delta_LB)$, (\ref{wLL}) holds for $B$ by applying $(\mathrm{LLE}^-)$ to $\sigma B$ and using the fact that $p_t^B(x,dy)\ge p_t^{\sigma B}(x,dy)$.

It is easy to obtain $(\mathrm{PI})$ from (\ref{wLL}) by repeating the proof of \cite[Lemma 7.12]{ghh+3}, but replacing $p_t^B(x,y)(u(x)-u(y))^2d\mu(y)$ on the ninth line of ``Case 1'' there by $(u(x)-u(y))^2p_t^B(x,dy)$. It follows by (\ref{4-11}) that $(\mathrm{Nash}^+)$ holds. In particular, $p_t^B(x,y)$ exists. Again by (\ref{wLL}), we have
$$p_t^B(x,y)\ge\frac{c}{V\left(x_0,W^{-1}(x_0,t)\right)}$$
for $\mu$-a.e.\ $x,y\in B'=B(x_0,\delta_LW^{-1}(x_0,t))$, which is exactly the condition $(\mathrm{LLE})$.
\end{proof}

\begin{lemma}\label{L4-3}
Assume that $\Omega$ is a bounded open set in $M$, and $\chi:\mathbb{R}\to\mathbb{R}$ is a non-decreasing function such that $\chi(s)=0$ for all $s\le 0$ and $Lip(\chi)\le 1$. Then, for every $f\in\mathcal{F}'(\Omega)$ such that $f\le 0$ $\mu$-a.e.\ on $\Omega^c$, there is
\begin{equation}\label{4-3}
\chi(f)\in\mathcal{F}(\Omega)\quad\mbox{and}\quad\tilde{\mathcal{E}}(f,\chi(f))\ge\mathcal{E}(\chi(f),\chi(f)).
\end{equation}
\end{lemma}

\begin{proof}
By the definition of $\mathcal{F}'(\Omega)$, there exists $w\in\mathcal{F}$ such that $f=w$ on some $\Omega'\Supset\Omega$. Without loss of generality, we assume $f\le w\le 0$ on $\Omega^c$ (otherwise we replace $\Omega'$ by some $\Omega''$ with $\Omega\Subset\Omega''\Subset\Omega'$, and then replace $w$ by $w'=w\varphi_0\in\mathcal{F}$ with some $\varphi_0\in\mathrm{cutoff}(\Omega'',\Omega')$). This way, $\chi(f)=\chi(w)$ directly.

Since (\ref{4-3}) holds for $w$ by \cite[Lemma 4.3]{gh08}, then $\chi(f)\in\mathcal{F}(\Omega)$. Further, since $w-f$ is supported outside $\Omega'$, and $\chi(w)$ is supported in $\Omega$, both non-negative, then
\begin{align*}
\tilde{\mathcal{E}}(f,\chi(f))=&\ \tilde{\mathcal{E}}(f,\chi(w))=\mathcal{E}(w,\chi(w))+\tilde{\mathcal{E}}(f-w,\chi(w))\\
=&\ \mathcal{E}(w,\chi(w))+\mathcal{E}^{(L)}(f-w,\chi(w))+\int_{M\times M}\big(\chi(w)(x)-\chi(w)(y)\big)\big((f-w)(x)-(f-w)(y)\big)dj(x,y)\\
=&\ \mathcal{E}(w,\chi(w))+0+2\int_{\Omega\times(\Omega')^c}\chi(w)(x)(w(y)-f(y))dj(x,y)\ge\mathcal{E}(\chi(w),\chi(w))+0=\mathcal{E}(\chi(f),\chi(f)),
\end{align*}
showing (\ref{4-3}) exactly for $f$.
\end{proof}

Below is a generalized version of the parabolic maximum principle:

\begin{proposition}\label{gpmp}
Let $\Omega$ be a bounded open set in $M$, and $Q=(t_1,t_2]\times\Omega$. If
\begin{itemize}
\item $u:(t_1,t_2]\to\mathcal{F}'(\Omega)$ is subcaloric on $Q$;
\item $u\le 0$ weakly on $(t_1,t_2]\times\Omega^c$, that is, $u_+(t,\cdot)\in\mathcal{F}(\Omega)$ for all $t_1<t\le t_2$;
\item $u|_{t=t_1}\le 0$ weakly in $\Omega$, that is, $\|u_+(t,\cdot)\|_{L^2(\Omega)}\to 0$ as $t\downarrow t_1$,
\end{itemize}
then $u\le 0$ on $(t_1,t_2]\times M$.
\end{proposition}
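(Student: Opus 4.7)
The approach is the standard parabolic maximum principle argument: test the subcaloric inequality against the admissible test function $\varphi=u_+(t,\cdot)\in\mathcal{F}(\Omega)$ to show that $I(t):=\|u_+(t,\cdot)\|_{L^2(\Omega)}^2$ is non-increasing on $(t_1,t_2]$. Combined with $I(t_1^+)=0$ from the initial assumption, this forces $I\equiv 0$; since $u_+(t,\cdot)\in\mathcal{F}(\Omega)$ also vanishes $\mu$-a.e.\ on $\Omega^c$, we then conclude $u\le 0$ on $(t_1,t_2]\times M$.

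The first ingredient is the energy bound $\tilde{\mathcal{E}}(u(t,\cdot),u_+(t,\cdot))\ge 0$. By the Beurling-Deny decomposition (\ref{bd}), the strongly local part reduces to $\mathcal{E}^{(L)}(u_+,u_+)\ge 0$ (using strong locality together with $\mathcal{E}^{(L)}(u_-,u_+)=0$ from disjoint supports); the jump part is non-negative via the pointwise bound $(u(x)-u(y))(u_+(x)-u_+(y))\ge (u_+(x)-u_+(y))^2$, verified by a short case analysis on the signs of $u(x)$ and $u(y)$. The second ingredient is a chain-rule estimate: applying the elementary convexity inequality $a_+^2-b_+^2\le 2a_+(a-b)$ (valid for all $a,b\in\mathbb{R}$) pointwise with $a=u(t,x)$, $b=u(s,x)$ and integrating over $\Omega$ gives
$$I(t)-I(s)\le 2\bigl(u(t,\cdot)-u(s,\cdot),\,u_+(t,\cdot)\bigr)_{L^2(\Omega)}\qquad\text{for }t_1<s<t\le t_2.$$
Dividing by $t-s$ and letting $s\uparrow t$ with the test function $u_+(t,\cdot)$ held fixed, the right-hand side converges by definition of subcaloric to
$$2\frac{d_-}{d\tau}\bigl(u(\tau,\cdot),u_+(t,\cdot)\bigr)_{L^2(\Omega)}\bigg|_{\tau=t}\le -2\tilde{\mathcal{E}}(u(t,\cdot),u_+(t,\cdot))\le 0.$$
Hence the upper-left Dini derivative of $I$ is non-positive at every $t\in(t_1,t_2]$.

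The main obstacle is upgrading this pointwise Dini-derivative bound into actual monotonicity of $I$: since one has only weak $L^2$-continuity of $u(t,\cdot)$ rather than strong, the chain rule for differentiating $\|u_+(t,\cdot)\|^2$ when the truncation itself depends on $t$ is delicate. I plan to handle this using the continuity and piecewise differentiability of $t\mapsto(u(t,\cdot),\varphi)_{L^2(\Omega)}$ for each $\varphi\in L^2(\Omega)$, both built into the definition of subcaloric, which provide enough regularity of $I$ to invoke a standard monotonicity lemma for functions with non-positive Dini derivative. Once $I$ is shown to be non-increasing, the initial assumption $I(t_1^+)=0$ forces $I\equiv 0$ on $(t_1,t_2]$, completing the proof.
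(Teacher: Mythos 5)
Your overall strategy is the same as the paper's: test against the truncation and derive a one‑sided energy estimate. The paper actually proves the stronger inequality $\tilde{\mathcal{E}}(f,\chi(f))\ge\mathcal{E}(\chi(f),\chi(f))$ for every non-decreasing, Lipschitz $\chi$ vanishing on $(-\infty,0]$, of which your $\tilde{\mathcal{E}}(u,u_+)\ge 0$ is the $\chi(s)=s_+$ case with the right-hand side discarded; your pointwise case analyses for both the jump and local parts are correct. However, you gloss over one technicality that the paper treats with care: $u(t,\cdot)$ lives in $\mathcal{F}'(\Omega)$, so $\tilde{\mathcal{E}}$ is defined through a representative $w\in\mathcal{F}$ agreeing with $u$ only on a strict neighbourhood $\Omega'\Supset\Omega$. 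The identity $\mathcal{E}^{(L)}(w,u_+)=\mathcal{E}^{(L)}(u_+,u_+)$ and the sign of the off-diagonal jump contribution $\int_{\Omega\times(\Omega')^c}$ require choosing $w$ so that $w\le 0$ on $\Omega^c$ (the paper does this by multiplying by a cutoff $\varphi_0\in\mathrm{cutoff}(\Omega'',\Omega')$); without this your ``disjoint supports'' and ``case analysis'' steps are only heuristic.

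The more serious issue is the monotonicity upgrade, which you flag but do not resolve. Your convexity inequality $I(t)-I(s)\le 2\bigl(u(t,\cdot)-u(s,\cdot),u_+(t,\cdot)\bigr)$ combined with weak $L^2$-continuity of $u(t,\cdot)$ only yields that $I$ is lower semicontinuous, i.e.\ $I(t)\le\liminf_{s\to t}I(s)$ from each side; it does \emph{not} give continuity of $I(t)=\|u_+(t,\cdot)\|_{L^2(\Omega)}^2$, because the map $v\mapsto\|v_+\|^2$ is not weakly continuous. Lower semicontinuity plus $D^-I\le 0$ at every point is not sufficient for $I$ to be non-increasing: the function $f(t)=0$ for $t\le c$ and $f(t)=1$ for $t>c$ is lower semicontinuous with upper-left Dini derivative $\le 0$ everywhere, yet it is increasing. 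Nor does the piecewise differentiability of $t\mapsto(u(t,\cdot),\varphi)$ for each fixed $\varphi$ force $\sup_\tau\|\partial_\tau u(\tau,\cdot)\|_{L^2}<\infty$, which is what one would need to conclude strong $L^2$-continuity of $u(t,\cdot)$ and hence continuity of $I$. The paper avoids this pitfall by establishing (\ref{4-3}) and then invoking the argument of \cite[Proposition 4.11]{gh08} interval by interval, where the chain rule is justified by a Steklov-averaging (mollification in time) device; you would need to insert such an averaging step, or some other mechanism that genuinely rules out upward jumps of $I$ from the right, before the ``standard monotonicity lemma'' can be applied.
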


\begin{proof}
Repeat the proof of \cite[Proposition 4.11]{gh08} on each interval $(t_1,s_1]$ and $(s_i,s_{i+1}]$, but using Lemma \ref{L4-3} whenever \cite[Lemma 4.3]{gh08} was used there.
\end{proof}

The following lemma is extremely important in this paper:

\begin{lemma}\label{comp}
Given any open sets $U\Subset\Omega$, for every $u:(t_1,t_2]\to\mathcal{F}'(\Omega)$ that is supercaloric, non-negative on $Q=(t_1,t_2]\times\Omega$ with $\overline{T}_Q^U(u_-)<\infty$, for all $t_1<s<t\le t_2$ and $\mu$-a.e.\ $x\in U$,
\begin{equation}\label{u-l}
u(t,x)\ge P_{t-s}^Uu(s,\cdot)(x)-2\overline{T}_{(s,t]\times\Omega}^U(u_-).
\end{equation}
\end{lemma}

\begin{proof}
For all $t_1<s<t\le t_2$ and $x\in M$, define
$$u_s^{U;\Omega}(t,x)=u_+(t,x)+2\overline{T}_{(s,t]\times\Omega}^U(u_-).$$
Clearly $u_s^{U;\Omega}\ge 0$ on $(s,t_2]\times M$. Since $u_+=u$ on $\mathrm{supp}(\varphi)$, for every non-negative $\varphi\in\mathcal{F}(U)$ we have
\begin{align*}
\frac{\mathrm{d}}{\mathrm{d}t}\left(u_s^{U;\Omega},\varphi\right)+\mathcal{E}\left(u_s^{U;\Omega},\varphi\right)=&\ 2\left(T_\Omega^U(u_-;t),\varphi\right)+\frac{\mathrm{d}}{\mathrm{d}t}(u_+,\varphi)+\mathcal{E}(u_+,\varphi)\\
=&\ 2\left(T_\Omega^U(u_-;t),\varphi\right)_{L^2(U)}+\frac{\mathrm{d}}{\mathrm{d}t}(u,\varphi)+\big(\mathcal{E}(u,\varphi)+\mathcal{E}(u_-,\varphi)\big)\\
\ge&\ 2\left(T_\Omega^U(u_-;t),\varphi\right)_{L^2(U)}-2\int_U\int_{\Omega^c}\varphi(x)u_-(t,y)J(x,dy)d\mu(x)\\
\ge&\ 2\left(T_\Omega^U(u_-;t),\varphi\right)_{L^2(U)}-2\int_U\varphi(x)\left(\int_{\Omega^c}\sup\limits_{x'\in U}u_-(t,y)J(x',dy)\right)d\mu(x)=0.
\end{align*}
Hence $u_s^{U;\Omega}$ is supercaloric on $Q_s:=(s,t_2]\times U$, that is, $v(t,\cdot):=P_{t-s}^Uu(s,\cdot)-u_s^{U;\Omega}(t,\cdot)$ is subcaloric on $Q_s$.

Clearly $v\le 0$ on $U^c$, while
$$\lim\limits_{s'\downarrow s}v(s',\cdot)=u(s,\cdot)-u_+(s,\cdot)=0$$
in $L^2(U)$. Thus $v\le 0$ on $Q_s$ by Proposition \ref{gpmp}. In particular,
$$u(t,x)=u_+(t,x)=u_s^{U;\Omega}(t,x)-2\overline{T}_{(s,t]\times\Omega}^U(u_-)\ge P_{t-s}^Uu(s,\cdot)(x)-2\overline{T}_{(s,t]\times\Omega}^U(u_-)$$
for all $t\in(s,t_2]$ and $\mu$-a.e.\ $x\in\lambda B$, which completes the proof.
\end{proof}

\begin{proposition}\label{w_D}
$(\mathrm{LLE})\Rightarrow(\mathrm{wPH}_1)$.
\end{proposition}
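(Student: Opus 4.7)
The plan is to combine Lemma \ref{comp} with the pointwise Dirichlet heat kernel lower bound provided by $(\mathrm{LLE})$, so that averaging in time converts this into the claimed comparison between the mean of $u$ over the ``top'' cylinder and the essential infimum over the ``bottom'' cylinder. Fix $B=B(x_0,R)$ with $0<R<\bar R$, $t_0\in\mathbb R$, $0<\lambda\le C_0^{-1}$, and a non-negative bounded supercaloric $u$ on $Q=(t_0-4\delta W(\lambda B),t_0]\times B$. Write $Q_+=(t_0-3\delta W(\lambda B),t_0-2\delta W(\lambda B)]\times\lambda B$, $Q_-=(t_0-\delta W(\lambda B),t_0]\times\lambda B$, and $\overline T:=\overline T^{(2/3)}_{(t_0-3\delta W(\lambda B),t_0]\times B}(u_-)$. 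I would introduce an intermediate ratio $\lambda':=K\lambda$ with $K>1$ a constant (depending only on $C_W,\beta_1,\beta_2,\delta_L$) to be fixed below, and require $C_0$ large enough that $\lambda'\le 2/3$. Lemma \ref{comp} applied with ratio $\lambda'$, together with $u\ge 0$ on $B$ to drop the integrand outside $\lambda B$ and the monotonicity $\overline T^{(\lambda')}\le\overline T^{(2/3)}$ coming from $\lambda'\le 2/3$, then gives, for any $s<t$ inside $(t_0-3\delta W(\lambda B),t_0]$ and $\mu$-a.e.\ $x\in\lambda B$,
\begin{equation*}
u(t,x)\ \ge\ \int_{\lambda B}p_{t-s}^{\lambda' B}(x,y)u(s,y)d\mu(y)-2\overline T.
\end{equation*}

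The next step is to apply $(\mathrm{LLE})$ on the ball $\lambda' B$. For $(t,x)\in Q_-$ and $s$ in the time-base of $Q_+$, $\tau:=t-s\in[\delta W(\lambda B),3\delta W(\lambda B)]$. Two conditions must be met: (i) $\lambda B\subset B(x_0,\delta_L W^{-1}(x_0,\tau))$, i.e.\ $\lambda R\le\delta_L W^{-1}(x_0,\tau)$, which by (\ref{W}) reduces to $\delta\ge C_W\delta_L^{-\beta_2}$; and (ii) $\tau\le W(x_0,\delta_L\lambda' R)$, which by (\ref{W}) reduces to $K\ge\delta_L^{-1}(3C_W\delta)^{1/\beta_1}$. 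Fixing first $\delta$, then $K$, and then $C_0$ large enough for $\lambda'\le 2/3\wedge 1$, $(\mathrm{LLE})$ delivers $p_\tau^{\lambda' B}(x,y)\ge c_L/V(x_0,W^{-1}(x_0,\tau))$ on $\lambda B\times\lambda B$. Since also $W^{-1}(x_0,\tau)\le K\lambda R$ by (\ref{W-1}), volume doubling gives $V(x_0,W^{-1}(x_0,\tau))\le CK^\alpha V(\lambda B)$, whence with $c=c_L/(CK^\alpha)$,
\begin{equation*}
u(t,x)\ \ge\ \frac{c}{V(\lambda B)}\int_{\lambda B}u(s,y)d\mu(y)-2\overline T.
\end{equation*}

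To conclude I would average this inequality in $s$ over the time-base of $Q_+$ and pass to essential infimum in $(t,x)\in Q_-$, obtaining
\begin{equation*}
\mathop{\mathrm{einf}}_{Q_-}u\ \ge\ c\fint_{Q_+}u\,d\hat\mu-2\overline T,
\end{equation*}
which is $(\mathrm{wPH}_1)$ after rearrangement. The main delicacy is the simultaneous tuning of $\delta$, $K$ and $C_0$: $\delta$ must be large enough that the near-diagonal LLE region already contains $\lambda B$ throughout the relevant time window, $K$ must then be large enough that $\tau$ stays inside the time range allowed by LLE on the enlarged ball $\lambda'B=K\lambda B$, and $C_0$ must then be large enough that $\lambda'\le 2/3$ so that the tail cutoff parameter matches the one in $(\mathrm{wPH}_1)$. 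Compatibility is guaranteed by (\ref{W}) and $(\mathrm{VD})$, at the price of fixing $\delta$ possibly much larger than $1$, which is permitted since $(\mathrm{wPH}_1)$ only asks for ``there exists $\delta>0$''.
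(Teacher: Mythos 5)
Your argument is correct and matches the paper's proof in all essential respects: both first apply Lemma \ref{comp} to reduce the claim to a lower bound on a Dirichlet heat kernel, then invoke $(\mathrm{LLE})$ after tuning $\delta$ and $C_0$ against the scaling bounds (\ref{W}) and (\ref{W-1}), and finally average over $s$ in the time base of the earlier cylinder. The only cosmetic difference is the choice of intermediate ball (the paper fixes the ratio $\frac{2}{3}$ in Lemma \ref{comp} and verifies the $(\mathrm{LLE})$ conditions on $\frac{2}{3}B$, whereas you use the scaled ratio $\lambda'=K\lambda$); both are interchangeable once $\delta$, $K$ and $C_0$ are calibrated so that $\lambda'\le 2/3$ and the $(\mathrm{LLE})$ window applies.
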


\begin{proof}
Let $\delta>0$ be an arbitrary number. By (\ref{u-l}) with $\lambda=\frac{2}{3}$, we obtain for every $u$ that is non-negative and supercaloric on $(t_0-4\delta W(B),t_0]\times B$, all $t_0-4\delta W(B)<s<t\le t_0$ and $\mu$-a.e.\ $x\in\frac{2}{3}B$ that
$$u(t,x)\ge P_{t-s}^{\frac{2}{3}B}u(s,\cdot)(x)-2\overline{T}_{(s,t]\times B}^{(2/3)}(u_-)=\int_{\frac{2}{3}B}p_{t-s}^{\frac{2}{3}B}(x,y)u(s,y)d\mu(y)-2\overline{T}_{(s,t]\times B}^{(2/3)}(u_-).$$
Integrating this inequality over $s\in(t_0-3\delta W(\lambda'B),t_0-2\delta W(\lambda'B)]$, we see
\begin{equation}\label{epd-l}
	u(t,x)\ge\frac{1}{\delta W(\lambda'B)}\int_{t_0-3\delta W(\lambda'B)}^{t_0-2\delta W(\lambda'B)}\int_{\frac{2}{3}B}p_{t-s}^{\frac{2}{3}B}(x,y)u(s,y)d\mu(y)-2\overline{T}_Q^{(2/3)}(u_-)
\end{equation}
for all $t_0-\delta W(\lambda'B)<t\le t_0$ and $x\in\lambda'B$, with an arbitrary $\lambda'\in(0,1)$.

Now we select
$$\delta=C_W\delta_L^{-\beta_2},\quad \lambda_0=\frac{2}{3}\cdot\frac{\delta_L}{(3C_w\delta)^{1/\beta_1}}.$$
For all $(s,y)\in Q_-:=(t_0-3\delta W(\lambda'B),t_0-2\delta W(\lambda'B)]\times\lambda'B$ and $(t,x)\in(t_0-\delta W(\lambda'B),t_0]\times\lambda'B$, where $0<\lambda'\le\lambda_0$, it follows that $W(\delta_L^{-1}\lambda'B)\le\delta W(\lambda'B)<t-s$ and
$$t-s<3\delta W(\lambda'B)\le 3\delta C_W\left(\frac{\lambda'}{\frac{2}{3}\delta_L}\right)^{\beta_1}W\left(\delta_L\cdot\frac{2}{3}B\right)\le W\left(\delta_L\cdot\frac{2}{3}B\right).$$
Hence by $(\mathrm{LLE})$,
$$p_{t-s}^{\frac{2}{3}B}(x,y)\ge\frac{c_L}{V\left(x_0,W^{-1}(x_0,t-s)\right)}\ge\frac{c_L}{V\left(x_0,W^{-1}(x_0,3\delta W(\lambda'B))\right)}\ge\frac{c(\delta)}{\mu(\lambda'B)}.$$
Inserting this inequality into (\ref{epd-l}), we obtain
$$u(t,x)+2\overline{T}_Q^{(2/3)}(u_-)\ge\frac{1}{\delta W(\lambda'B)}\int_{Q_-}\frac{c(\delta)u(s,y)}{\mu(\lambda'B)}d\hat{\mu}(s,y)=\frac{c(\delta)}{\hat{\mu}(Q_-)}\int_{Q_-}ud\hat{\mu},$$
which is exactly $(\mathrm{wPH}_1)$.
\end{proof}

\begin{proposition}\label{wph}
$(\mathrm{wPH}_q)\Rightarrow(\mathrm{PGL}_q)$, and $(\mathrm{PGL}_q)\Rightarrow(\mathrm{wPH}_{q'})$ for all $0<q'<q\le 1$.
\end{proposition}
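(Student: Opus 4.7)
My plan is to prove each direction by a standard distributional argument: Chebyshev's inequality for $(\mathrm{wPH}_q)\Rightarrow(\mathrm{PGL}_q)$, and the layer-cake formula for $(\mathrm{PGL}_q)\Rightarrow(\mathrm{wPH}_{q'})$. The loss $q'<q$ appearing in the second direction arises exactly because the layer-cake integral of $a^{q'-1-q}$ over $[M,\infty)$ converges only when $q'<q$.

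For $(\mathrm{wPH}_q)\Rightarrow(\mathrm{PGL}_q)$, I fix $u$ satisfying the hypotheses of $(\mathrm{PGL}_q)$ on $Q=(t_0-4\delta W(\lambda B),t_0]\times B$. If $\hat{\mu}(Q'\cap\{u\ge a\})\ge\eta\,\hat{\mu}(Q')$ where $Q'=(t_0-3\delta W(\lambda B),t_0-2\delta W(\lambda B)]\times\lambda B$, then Chebyshev gives
$$\left(\fint_{Q'}u^q\,d\hat{\mu}\right)^{1/q}\ge\eta^{1/q}a,$$
and applying $(\mathrm{wPH}_q)$ bounds the left-hand side by $C\bigl(\mathop{\mathrm{einf}}_{Q''}u+\overline{T}_{(t_0-3\delta W(\lambda B),t_0]\times B}^{(2/3)}(u_-)\bigr)$ with $Q''=(t_0-\delta W(\lambda B),t_0]\times\lambda B$. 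Rearranging produces the desired lower bound on $\mathop{\mathrm{einf}}_{Q''}u$.

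For $(\mathrm{PGL}_q)\Rightarrow(\mathrm{wPH}_{q'})$, I set
$$M:=C_{\mathrm{PGL}}\bigl(\mathop{\mathrm{einf}}_{Q''}u+2\overline{T}_{(t_0-3\delta W(\lambda B),t_0]\times\lambda B}^{(2/3)}(u_-)\bigr).$$
The contrapositive of $(\mathrm{PGL}_q)$ applied with the \emph{exact} density $\eta:=\hat{\mu}(Q'\cap\{u\ge a\})/\hat{\mu}(Q')$ forces $M\ge\eta^{1/q}a$, i.e.\ $\hat{\mu}(Q'\cap\{u\ge a\})\le(M/a)^q\hat{\mu}(Q')$ for every $a>0$. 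Combining this with the trivial estimate $\hat{\mu}(Q'\cap\{u\ge a\})\le\hat{\mu}(Q')$ and splitting the layer-cake integral at $a=M$,
$$\int_{Q'}u^{q'}\,d\hat{\mu}=q'\int_0^\infty a^{q'-1}\hat{\mu}(Q'\cap\{u>a\})\,da\le\hat{\mu}(Q')\left(M^{q'}+q'M^q\int_M^\infty a^{q'-1-q}\,da\right)=\frac{q}{q-q'}\,\hat{\mu}(Q')M^{q'},$$
so $\bigl(\fint_{Q'}u^{q'}\,d\hat{\mu}\bigr)^{1/q'}\le(q/(q-q'))^{1/q'}M$, which is $(\mathrm{wPH}_{q'})$ with constant $(q/(q-q'))^{1/q'}C_{\mathrm{PGL}}$.

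The main obstacle is purely bookkeeping on the tails: the tail in $(\mathrm{wPH}_q)$ is taken over the cylinder $(\cdots)\times B$, whereas the tail in $(\mathrm{PGL}_q)$ is over $(\cdots)\times\lambda B$. In either direction the two tails differ only by absorbable constants, since for $z\in(2/3)\lambda B\subset(2/3)B$ the integral of $u_-$ over $(\lambda B)^c$ dominates that over $B^c$, and conversely the supremum in $T_B^{(2/3)}(u_-)$ over $(2/3)B$ can be reduced to a supremum over $(2/3)\lambda B$ at the cost of a slight shrinkage of $\lambda$ (equivalently, a change in $C_0$). These adjustments only alter the constants and do not affect the structural Chebyshev/layer-cake argument above.
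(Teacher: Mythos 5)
Your core argument is the same as the paper's: the direction $(\mathrm{wPH}_q)\Rightarrow(\mathrm{PGL}_q)$ is Markov/Chebyshev, and the direction $(\mathrm{PGL}_q)\Rightarrow(\mathrm{wPH}_{q'})$ is the layer-cake formula with the integral split at the threshold $M$ (the paper's $A_u$). Your arithmetic $M^{q'}\cdot q/(q-q')$ is a cleaner version of the paper's $1+q'C^q/(q-q')$, differing only in whether the $(\mathrm{PGL}_q)$-constant is absorbed into the threshold.

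The one place you wave your hands — the final paragraph on matching the tails — is also the one place your stated reasoning does not actually work. You claim the supremum over $(2/3)B$ in $T_B^{(2/3)}(u_-)$ can be reduced to a supremum over $(2/3)\lambda B$ "by slightly shrinking $\lambda$"; that is false: a supremum over a larger set cannot be dominated by a supremum over a smaller one by adjusting the smaller radius. Since $u_-\equiv 0$ on $B$ one has $T_{\lambda B}^{(2/3)}(u_-;s)\le T_B^{(2/3)}(u_-;s)$, which makes direction $(2)$ harmless (your $M$ is bounded above by $2C_{\mathrm{PGL}}(\mathop{\mathrm{einf}}u+\overline{T}^{(2/3)}_{(\cdot)\times B}(u_-))$, giving $(\mathrm{wPH}_{q'})$ with the right tail). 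But for direction $(1)$, Markov plus $(\mathrm{wPH}_q)$ yields the lower bound $C^{-1}\eta^{1/q}a-C\overline{T}^{(2/3)}_{(\cdot)\times B}(u_-)$, whereas the literal statement of $(\mathrm{PGL}_q)$ asks for $-2\overline{T}^{(2/3)}_{(\cdot)\times\lambda B}(u_-)$, which is a \emph{stronger} requirement since $\overline{T}_{\lambda B}\le\overline{T}_B$; the inequality $\overline{T}_B\le 2\overline{T}_{\lambda B}$ you would need does not hold in general. To be fair, the paper's own proof of part $(1)$ stops at exactly the same place (the displayed bound carries $\overline{T}^{(2/3)}_{(\cdot)\times B}$, and when $(\mathrm{PGL}_q)$ is later invoked in Proposition~\ref{v_D} the tail used is the $B$-tail $T_{C_0B_{k+1}}^{(2/3)}$, i.e.\ the tail over the large ball). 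So your difficulty is inherited from what appears to be a typo in the statement of $(\mathrm{PGL}_q)$ (the subscript $\times\lambda B$ should be $\times B$); once that is repaired, your proof and the paper's are the same, and you should drop the "shrink $\lambda$" remark, which is not salvageable as written.
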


\begin{proof}
(1) Assume that $(\mathrm{wPH}_q)$ holds. If $\hat{\mu}(Q_-\cap\{u\ge a\})\ge\eta\hat{\mu}(Q_-)$, then
$$a\eta^{1/q}\le\left(\fint_{Q_-}u^qd\hat{\mu}\right)^{1/q}\le C_{\mathrm{wPH}}\left(\mathop{\mathrm{einf}}_{(t_0-\delta W(\lambda B),t_0]\times\lambda B}u+\overline{T}_Q^{(2/3)}(u_-)\right)$$
by the Markov inequality, showing that $(\mathrm{PGL}_q)$ holds.

(2) Assume that $(\mathrm{PGL}_q)$ holds. Set $Q_-=(t_0-3\delta W(\lambda B),t_0-2\delta W(\lambda B)]\times\lambda B$. For each $a>0$ and $0<\lambda<\lambda_0$, by taking $\eta=\hat{\mu}\big(Q_-\cap\{u\ge a\}\big)/\hat{\mu}(Q_-)$ we obtain
$$A_u:=\mathop{\mathrm{einf}}_{(t_0-\delta W(\lambda B),t_0]\times\lambda B}u+W(\lambda B)T_Q^{(2/3)}(u_-)\ge C_{\mathrm{PGL}}^{-1}\left(\frac{\hat{\mu}\big(Q_-\cap\{u\ge a\}\big)}{\hat{\mu}(Q_-)}\right)^{1/q}a.$$
Therefore, for any $0<q'<q$,
\begin{align*}
\fint_{Q_-}u^{q'}d\hat{\mu}=&\ \int_0^\infty q'a^{q'-1}\frac{\hat{\mu}\big(Q_-\cap\{u\ge a\}\big)}{\hat{\mu}(Q_-)}da=\left(\int_0^{A_u}+\int_{A_u}^\infty\right)q'a^{q'-1}\frac{\hat{\mu}\big(Q_-\cap\{u\ge a\}\big)}{\hat{\mu}(Q_-)}da\\
\le&\ \int_0^{A_u}q'a^{q'-1}da+\int_{A_u}^\infty q'a^{q'-1}\left(C_{\mathrm{PGL}}A_ua^{-1}\right)^qda=\left(1+\frac{q'C_{\mathrm{PGL}}^q}{q-q'}\right)A_u^{q'},
\end{align*}
which is exactly $(\mathrm{wPH}_{q'})$.
\end{proof}

Note that the elliptic analog, that is, $(\mathrm{wEH}_q)\Rightarrow(\mathrm{ELG}_q)\Rightarrow(\mathrm{wEH}_{q'})$ for all $0<q'<q\le 1$, is essentially contained in the proof of \cite[Proposition 5.2]{hy}.

\begin{lemma}\label{nash}
$(\mathrm{FK})\Leftrightarrow(\mathrm{Nash}^-)$.
\end{lemma}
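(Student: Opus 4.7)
The plan is to establish the equivalence by two implications, each following the classical Nash--Moser circle of ideas localized to a single ball $B=B(x_0,R)$ with $R<\sigma\bar R$.

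For $(\mathrm{FK})\Rightarrow(\mathrm{Nash}^-)$, I will first convert Faber--Krahn into a functional Nash inequality on $B$ by the standard truncation argument: for any $f\in\mathcal{F}(B)$ and any $s>0$, the Markov property yields $(|f|-s)_+\in\mathcal{F}(U_s)$, where $U_s=\{|f|>s\}\subset B$, with $\mathcal{E}((|f|-s)_+,(|f|-s)_+)\le\mathcal{E}(f,f)$; combining $(\mathrm{FK})$ applied to $U_s$ with the Markov-inequality bound $\mu(U_s)\le\|f\|_{L^1}/s$, and decomposing $\|f\|_{L^2}^2\le 2\|(|f|-s)_+\|_{L^2}^2+2s\|f\|_{L^1}$, then optimizing over $s$, produces
$$\|f\|_{L^2}^{2+2/\nu}\le\frac{CW(B)^{1/\nu}}{\mu(B)}\,\mathcal{E}(f,f)^{1/\nu}\,\|f\|_{L^1}^{2}.$$
Plugging $f=P_t^Bg$ with $\|g\|_{L^1}\le 1$, $g\ge 0$, and using $\tfrac{d}{dt}\|P_t^Bg\|_{L^2}^2=-2\mathcal{E}(P_t^Bg,P_t^Bg)$ together with the sub-Markovian bound $\|P_t^Bg\|_{L^1}\le 1$, Nash's ODE $u'\le -c\mu(B)W(B)^{-1}u^{1+\nu}$ for $u(t)=\|P_t^Bg\|_{L^2}^2$ integrates to $u(t)\le C\mu(B)^{-1}(W(B)/t)^{1/\nu}$. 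Hence $\|P_{t/2}^B\|_{L^1\to L^2}^2$ obeys the same bound; by self-adjointness $\|P_{t/2}^B\|_{L^2\to L^\infty}$ coincides with it, and composing $P_t^B=P_{t/2}^BP_{t/2}^B$ gives $\|P_t^B\|_{L^1\to L^\infty}\le C'\mu(B)^{-1}(W(B)/t)^{1/\nu}$, which both produces the heat kernel $p_t^B(x,y)$ and supplies the pointwise bound of $(\mathrm{Nash}^-)$.

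For $(\mathrm{Nash}^-)\Rightarrow(\mathrm{FK})$, let $U\subset B$ be open. Domain monotonicity of the Dirichlet heat kernel $p_t^U\le p_t^B$ transfers $(\mathrm{Nash}^-)$ to $\|P_t^U\|_{L^1\to L^\infty}\le C\mu(B)^{-1}(W(B)/t)^{1/\nu}$. For any $f\in\mathcal{F}(U)$ normalized by $\|f\|_{L^2}=1$, Cauchy--Schwarz gives $\|f\|_{L^1}\le\mu(U)^{1/2}$, so
$$\|P_t^Uf\|_{L^2}^2\le\|P_t^Uf\|_{L^\infty}\|P_t^Uf\|_{L^1}\le\frac{C\mu(U)}{\mu(B)}\Bigl(\frac{W(B)}{t}\Bigr)^{1/\nu}.$$
Meanwhile, applying Jensen's inequality to the spectral measure of the generator of $P_t^U$ (which is a probability measure on $[0,\infty)$ by $\|f\|_{L^2}=1$) gives $\|P_t^Uf\|_{L^2}^2\ge e^{-2t\mathcal{E}(f,f)}$. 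Choosing $t_\ast:=W(B)(2C\mu(U)/\mu(B))^\nu$ makes the upper bound $\le 1/2$, forcing $\mathcal{E}(f,f)\ge(\log 2)/(2t_\ast)\ge C'^{-1}W(B)^{-1}(\mu(B)/\mu(U))^\nu$; taking the infimum over $f$ yields $(\mathrm{FK})$.

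The main obstacle is the truncation step in the first direction: one must verify that $(|f|-s)_+$ genuinely sits in $\mathcal{F}(U_s)$ with the expected energy bound, and also that $P_t^B$ is sub-Markovian on $L^1(B)$ so Nash's ODE closes. Both facts follow from the Markov property of the regular Dirichlet form and the symmetry/positivity of $P_t^B$, but care is required because $U_s$ is merely measurable and $\mathcal{F}(U_s)$ is defined via $C_0$-closure; one handles this by approximating $(|f|-s)_+$ by truncations of $\mathcal{F}\cap C_0(B)$ functions. Once those standard technicalities are dispatched, the ODE integration and the spectral-Jensen step are routine.
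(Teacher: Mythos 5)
Your proof is correct, and the forward direction $(\mathrm{FK})\Rightarrow(\mathrm{Nash}^-)$ coincides with what the paper does implicitly: the paper cites \cite[Lemmas 5.4--5.5]{gh14} for precisely the classical truncation-to-Nash-inequality step followed by Nash's ODE and the $L^1\to L^2\to L^\infty$ duality, which you have written out explicitly (your ODE should carry $\mu(B)^\nu$ rather than $\mu(B)$, but the resolved bound $u(t)\le C\mu(B)^{-1}(W(B)/t)^{1/\nu}$ is right). Your attention to the technical point that $U_s=\{|f|>s\}$ need not be open, so that $(\mathrm{FK})$ must be invoked via approximation of $f$ by $\mathcal{F}\cap C_0(B)$ functions, is exactly the subtlety that must be addressed.

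The converse direction is a genuine but minor variant of the paper's argument. The paper starts from the monotone approximation of the energy, $\mathcal{E}(f)\ge t^{-1}(f-P_t^\Omega f,f)$, then lower-bounds this by $\|f\|_2^2/t$ after the Nash kernel bound absorbs half of $\|f\|_2^2$ at the optimal $t$; spectrally this is the inequality $1-e^{-t\lambda}\le t\lambda$. You instead compare the upper bound $\|P_t^Uf\|_2^2\le C\mu(U)\mu(B)^{-1}(W(B)/t)^{1/\nu}$ with the Jensen lower bound $\|P_t^Uf\|_2^2\ge e^{-2t\mathcal{E}(f)}$ coming from convexity of $\lambda\mapsto e^{-2t\lambda}$ on the spectral measure, then pick the same optimal $t$. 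Both are one-line spectral facts, and both produce $\mathcal{E}(f)\ge c/t_\ast$ with the same $t_\ast\sim W(B)(\mu(U)/\mu(B))^\nu$; the paper's version gives the constant $1/(2t)$ while yours gives $\log 2/(2t_\ast)$, an immaterial difference. So the two proofs of $\Leftarrow$ differ only in which elementary estimate for $e^{-t\lambda}$ is used, and either is perfectly adequate.
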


\begin{proof}
The implication ``$\Rightarrow$'' follows from \cite[Lemmas 5.4 and 5.5]{gh14} by taking $a=\left(C_{\mathrm{FK}}W(B)\right)^{-1}\mu(B)^\nu$.

Now we prove ``$\Leftarrow$''. Same as in \cite[Page 553]{gh14}, for any $\Omega\subset B$ and $f\in\mathcal{F}(\Omega)$,
$$\mathcal{E}(f)\ge\frac{1}{t}\left(f-P_t^Bf,f\right)\ge\frac{\|f\|_2^2}{t}\left(1-\sup\limits_{x,y\in\Omega}p_t^B(x,y)\right)\ge\frac{\|f\|_2^2}{t}\left(1-C_{\mathrm{Nash}}\frac{\mu(\Omega)}{\mu(B)}\left(\frac{W(B)}{t}\right)^{1/\nu}\right).$$
Thus by taking $t=W(B)\big(2C_{\mathrm{Nash}}\mu(\Omega)/\mu(B)\big)^\nu$ we obtain
$$\lambda_1(\Omega)=\inf\limits_{f\in\mathcal{F}(\Omega)\setminus\{0\}}\frac{\mathcal{E}(f)}{\|f\|_2^2}\ge\frac{1}{2t}=\frac{C^{-1}}{W(B)}\left(\frac{\mu(B)}{\mu(\Omega)}\right)^\nu,$$
where $C=2^{1+\nu}C_{\mathrm{Nash}}^\nu$. Hence $(\mathrm{FK})$ holds with the same $\sigma$ as in $(\mathrm{Nash}^-)$.
\end{proof}

Note that if $(\mathrm{C})$ holds and $\overline{R}<\infty$, then $(\mathrm{Nash}^-)$ can never be extended to a ball $B=B(x_0,R)$ with $R>\frac{1}{2}\overline{R}=\mathrm{diam}(M)$. Actually, here $B=M$, and thus $P_t^B=P_t$ (in particular, $P_t^B1=P_t1=1$ for all $t>0$). Replacing $\nu$ by $\nu\wedge\frac{1}{2}$ if necessary, we assume $0<\nu<1$. Hence
$$\infty=\int_{W(B)}^\infty dt=\int_{W(B)}^\infty P_t^B1dt\le\int_{W(B)}^\infty\int_B C\left(\frac{W(B)}{t}\right)^{1/\nu}dt=\frac{C\nu}{1-\nu}\mu(B)W(B)<\infty,$$
which is a contradiction. Similarly, for $(\mathrm{FK})$ and $(\mathrm{E}_\le)$, there must be $\sigma\le\frac{1}{2}$.

\begin{lemma}\label{s-}
For any $0<q\le 1$, $(\mathrm{wPH}^-_q)\Rightarrow(\mathrm{S}^-)$, where $\delta'$ could be any positive number.
\end{lemma}

\begin{proof}
Since $1\in\mathcal{F}'\subset\mathcal{F}'(B)$ and $\mathcal{E}(1,\varphi)=0$ for any non-negative $\varphi\in\mathcal{F}(B)$, then
$$u(t,x)=\begin{cases}1,&\mbox{for all }\ 0<t\le 2w'\\
P_{t-2w'}^B1(x),&\mbox{for all }\ t>2w'\end{cases}$$
is clearly caloric on $(0,\infty)\times B$ with any $w'>0$. Trivially $0\le u\le 1$ on $(0,\infty)\times M$.

Now we take $w'=\delta W(\lambda_0B)$. By $(\mathrm{wPH}^-_q)$ with $t_0=4w'$, we see
$$\mathop{\mathrm{einf}}_{(w',2w']\times\lambda_0B}P_t^B1_B=\mathop{\mathrm{einf}}_{(3w',4w']\times\lambda_0B}u\ge C_{\mathrm{wPH}}^{-1}\left(\fint_{(w',2w']\times\lambda_0B}u^qd\hat{\mu}\right)^{1/q}=C_{\mathrm{wPH}}^{-1}.$$
It follows by induction that for any integer $n\ge 1$,
$$\mathop{\mathrm{einf}}_{((2n-1)w',2nw']\times\lambda_0B}P_t^B1_B\ge C_{\mathrm{wPH}}^{-1}\left(\fint_{((2n-1)w',2nw']\times\lambda_0B}u^qd\hat{\mu}\right)^{1/q}\ge C_{\mathrm{wPH}}^{-1}\mathop{\mathrm{einf}}_{((2n-1)w',2nw']\times\lambda_0B}u\ge C_{\mathrm{wPH}}^{-n}.$$

Given any $\delta'>0$, set $n_{\delta'}=\lceil C_W\lambda_0^{-\beta_2}\frac{\delta'}{2\delta}\rceil$. For all $0<t\le\delta'W(B)$, we obtain for $\mu$-a.e.\ $x\in\lambda_0B$ that
$$P_t^B1_B(x)\ge P_t^B\left(P_{2n_{\delta'}w'-t}^B1_B\right)(x)=P_{2n_{\delta'}w'}^B1_B(x)\ge C_{\mathrm{wPH}}^{-n_{\delta'}}.$$
By standard covering technique, we obtain $(\mathrm{S}^-)$ with $\varepsilon=1-c'C_{\mathrm{wPH}}^{-n_{\delta'}}$ (where $c'$ is independent of $t,B$).
\end{proof}

\begin{proposition}\label{lle}
$(\mathrm{wPH}^-_1)\Rightarrow(\mathrm{LLE})$; while $(\mathrm{wPH}^-_q)+(\mathrm{Nash}^-)\Rightarrow(\mathrm{LLE})$  for any $0<q<1$.
\end{proposition}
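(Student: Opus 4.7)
By Lemma \ref{LE1} it suffices to prove $(\mathrm{LLE}_-)$, i.e., for appropriate constants $\sigma,\delta$, to show
$$p_t^B(x,A)\ \ge\ \frac{c\mu(A)}{V(x_0,W^{-1}(x_0,t))}\qquad\text{for every measurable }A\subset B':=B(x_0,\delta W^{-1}(x_0,t))\text{ and }\mu\text{-a.e. }x\in B'.$$
Fix such $B=B(x_0,R)$, $t$ and $A$. Introduce an intermediate ball $B_1=B(x_0,R_1)$ with $\lambda_0 R_1\approx W^{-1}(x_0,t/(4\delta_{\mathrm{wPH}}))$, where $\lambda_0=C_0^{-1}$ is the largest admissible $\lambda$ in $(\mathrm{wPH}_q)$, and $\delta_{\mathrm{wPH}}$ is the time constant of $(\mathrm{wPH}_q)$, which we shrink (using the remark following the definition of $(\mathrm{wPH}_q)$) so that $2\delta_{\mathrm{wPH}}\le\delta'$, the constant from $(\mathrm{S}^-)$. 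Via $(\ref{W})$--$(\ref{W-1})$ and $(\mathrm{VD})$, the parameters $R_1$, $\delta$ and $\sigma$ can be calibrated simultaneously so that $B_1\subset B$, $B'\subset\tfrac14(\lambda_0 B_1)$, and $W(\lambda_0 B_1)\asymp t$.

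Set $u(s,y):=P_s^{B_1}1_A(y)$. It is non-negative, bounded by $1$, caloric on $(0,\infty)\times B_1$, and $u_-\equiv 0$, so the tails in $(\mathrm{wPH}_q)$ vanish. Apply $(\mathrm{wPH}_q)$ with outer ball $B_1$, $\lambda=\lambda_0$ and $t_0=t$; the cylinder $Q=(t-4\delta_{\mathrm{wPH}}W(\lambda_0 B_1),t]\times B_1$ lies in the domain of $u$ by the calibration. On $Q':=(t-3\delta_{\mathrm{wPH}}W(\lambda_0 B_1),t-2\delta_{\mathrm{wPH}}W(\lambda_0 B_1)]\times\lambda_0 B_1$, use Fubini and symmetry of $p_s^{B_1}$, together with domain monotonicity $P_s^{B_1}\ge P_s^{\lambda_0 B_1}$ and $(\mathrm{S}^-)$ on $\lambda_0 B_1$ (applicable since $s\le\delta'W(\lambda_0 B_1)$ throughout $Q'$ and $A\subset\tfrac14(\lambda_0 B_1)$) to obtain
$$\fint_{Q'}u\,d\hat\mu\ =\ \frac{1}{\hat\mu(Q')}\int_{t-3\delta_{\mathrm{wPH}}W(\lambda_0 B_1)}^{t-2\delta_{\mathrm{wPH}}W(\lambda_0 B_1)}\!\!\!\int_A P_s^{B_1}1_{\lambda_0 B_1}(z)\,d\mu(z)\,ds\ \ge\ \frac{(1-\varepsilon)\mu(A)}{\mu(\lambda_0 B_1)}=:L,$$
with $L\gtrsim\mu(A)/V(x_0,W^{-1}(x_0,t))$ by $(\mathrm{VD})$ and the calibration.

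If $q=1$, $(\mathrm{wPH}_1)$ immediately yields $\mathop{\mathrm{einf}}_{(t-\delta_{\mathrm{wPH}}W(\lambda_0 B_1),t]\times\lambda_0 B_1}u\ge C^{-1}L$, whence $P_t^{B_1}1_A(x)\ge cL$ for $\mu$-a.e. $x\in\lambda_0 B_1\supset B'$. If $q<1$, the $L^1$-bound on $u$ must be upgraded to an $L^q$-bound; this is where $(\mathrm{Nash}^-)$ enters: it gives $p_s^{B_1}(y,z)\le C\mu(B_1)^{-1}(W(B_1)/s)^{1/\nu}$, which for $s\asymp t$ produces $u(s,y)\le U\lesssim\mu(A)/V(x_0,W^{-1}(x_0,t))$, of the same order as $L$. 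A Chebyshev-type estimate ($0\le u\le U,\ \fint u\ge L\Rightarrow\hat\mu(Q'\cap\{u\ge L/2\})/\hat\mu(Q')\ge L/(2U)$) then gives $(\fint_{Q'}u^q\,d\hat\mu)^{1/q}\gtrsim L$, and $(\mathrm{wPH}_q)$ concludes as in the $q=1$ case. Finally, domain monotonicity $p_t^B(x,\cdot)\ge p_t^{B_1}(x,\cdot)$ transfers the bound from $B_1$ to $B$, yielding $(\mathrm{LLE}_-)$ and hence $(\mathrm{LLE})$ via Lemma \ref{LE1}.

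The principal obstacle is the simultaneous calibration of scales: one must pick $R_1$, $\delta_{\mathrm{wPH}}$, $\sigma$ and $\delta$ so that $B_1\subset B$, $B'\subset\tfrac14(\lambda_0 B_1)$, the initial time $t-4\delta_{\mathrm{wPH}}W(\lambda_0 B_1)$ is non-negative, $s\le\delta'W(\lambda_0 B_1)$ throughout $Q'$, and $W(\lambda_0 B_1)\asymp t$. This is routine scaling bookkeeping via $(\ref{W})$, $(\ref{W-1})$ and $(\mathrm{VD})$, but it must be done with care so that all of these compatibilities hold at once.
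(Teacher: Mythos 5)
Your proposal is correct and reaches $(\mathrm{LLE}_-)$ (hence $(\mathrm{LLE})$ via Lemma \ref{LE1}) by essentially the same route as the paper for $q=1$: apply $(\mathrm{wPH}_1)$ to the caloric, globally non-negative function $(s,y)\mapsto P_s^{B_1}1_A(y)$ on a suitably sized intermediate ball, use Fubini and symmetry of the kernel to turn the space--time average into $\fint\frac{1}{\mu(\lambda_0 B_1)}\int_A P_s^{\lambda_0 B_1}1_{\lambda_0 B_1}$, and then apply $(\mathrm{S}^-)$.

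For $q<1$, you and the paper diverge in how the $L^1$ lower bound is converted into an $L^q$ lower bound, and here your route is genuinely different. The paper takes as caloric test function the heat kernel $(s,y)\mapsto p_s^{C_0B'}(x,y)$ itself and invokes a reverse H\"older inequality,
$$\int_{B'}(p_s)^q\,d\mu\ \ge\ \left(\int_{B'}p_s\,d\mu\right)^{2-q}\left(\int_{B'}(p_s)^2\,d\mu\right)^{-(1-q)},$$
bounding the $L^1$ factor from below by $(\mathrm{S}^-)$ and the $L^2$ factor from above by the \emph{on-diagonal} estimate $\int(p_s)^2=p_{2s}(x,x)$ coming from $(\mathrm{Nash}^-)$. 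You instead keep $u=P_s^{B_1}1_A$ as the test function, use $(\mathrm{Nash}^-)$ as an \emph{off-diagonal} pointwise bound integrated over $A$ to get $u\le U$ on $Q'$, and then pass from $\fint u\ge L$ to $\fint u^q\gtrsim(L/2)^q\cdot\tfrac{L}{2U}$ by a Chebyshev/density argument. Since both $L$ and $U$ are comparable to $\mu(A)/V(x_0,W^{-1}(x_0,t))$ after calibration, the ratio $L/U$ is bounded below, and the conclusion follows via $(\mathrm{wPH}_q)$ exactly as in $q=1$. Both variants use $(\mathrm{Nash}^-)$ in an essential way (the restriction $R<\sigma\bar R$ enters through it in both proofs), but your version stays with the semigroup acting on indicators throughout rather than applying $(\mathrm{wPH}_q)$ to the heat kernel directly; either choice is fine, and the Chebyshev step is a legitimate replacement for the reverse H\"older inequality. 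The calibration steps you defer (choice of $R_1$, $\delta_{\mathrm{wPH}}$, $\sigma$, $\delta$ so that all required inclusions and the compatibility $2\delta_{\mathrm{wPH}}\le\delta'$ hold) are indeed the routine scaling bookkeeping that the paper carries out explicitly; nothing about them threatens the argument.
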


Since $(\mathrm{PHI}^0)\Rightarrow(\mathrm{wPH}_1^-)$ trivially, it follows that $(\mathrm{PHI}^0)\Rightarrow(\mathrm{LLE})$, which is much simpler than \cite[Proposition 3.2]{ckw-p}.

\begin{proof}
Let $\delta$ be the constant in $(\mathrm{wPH}^-_q)$. By Lemma \ref{s-}, $(\mathrm{S}^-)$ holds with $\delta'=2\delta$. Choose a constant $\delta_L\in(0,1)$ so that for all $x_0\in M$ and $t,R>0$,
\begin{equation}\label{dltl}
W(x_0,\delta_LR)\le 4\delta W(x_0,\lambda_0R)\quad\mbox{and}\quad 4\delta_LW^{-1}(x_0,4\delta t)\le W^{-1}(x_0,t).
\end{equation}
In particular, for every ball $B=B(x_0,R)$ and all $0<t<W(\delta_LB)$,
$$B_t:=B\left(x_0,W^{-1}(x_0,t/(4\delta))\right)\subset\lambda_0B.$$

(1) For $q=1$, by $(\mathrm{wPH}^-_1)$ with $t_0=t=4\delta W(B_t)$ and $\lambda_0^{-1}B_t$ replacing $B$, we see
\begin{align*}
\int_Dp_t^{\lambda_0^{-1}B_t}(x,dy)\ &=P_t^{\lambda_0^{-1}B_t}1_D(x)\ge\fint_{\left(\frac{t}{4},\frac{t}{2}\right]\times B_t}P_s^{\lambda_0^{-1}B_t}1_D(z)d\hat{\mu}(s,z)=\fint_{t/4}^{t/2}\frac{1}{\mu(B_t)}\int_{B_t}\int_Dp_s^{\lambda_0^{-1}B_t}(z,dy)d\mu(z)ds\\
&=\fint_{t/4}^{t/2}\frac{1}{\mu(B_t)}\int_D\int_{B_t}p_s^{\lambda_0^{-1}B_t}(y,dz)d\mu(y)ds=\fint_{t/4}^{t/2}\frac{1}{\mu(B_t)}\int_DP_s^{\lambda_0^{-1}B_t}1_{B_t}d\mu ds\\
&\ge\fint_{t/4}^{t/2}\frac{1}{\mu(B_t)}\int_DP_s^{B_t}1_{B_t}d\mu ds\ge\frac{1-\varepsilon}{\mu(B_t)}\mu(D)
\end{align*}
for any $\mu$-measurable $D\subset B_t$ and $\mu$-a.e.\ $x\in\frac{1}{4}B_t$. Here we use $(\mathrm{S}^-)$ in the last line. Hence
$$p_t^B(x,dy)\ge p_t^{\lambda_0^{-1}B_t}(x,dy)\ge\frac{1-\varepsilon}{\mu(B_t)}\quad\mbox{on}\quad\frac{1}{4}B_t\supset B(x_0,\delta_LW^{-1}(x_0,t)).$$
Then $(\mathrm{LLE})$ follows by Lemma \ref{LE1}.

(2) For $0<q<1$, if $0<R<\sigma\overline{R}$, where $\sigma$ comes from $(\mathrm{Nash}^-)$, then
\begin{align*}
\int_{B_t}\left(p_s^{\lambda_0^{-1}B_t}(x,y)\right)^qd\mu(y)\ge&\ \left(\int_{B_t}p_s^{\lambda_0^{-1}B_t}(x,y)d\mu(y)\right)^{2-q}\left(\int_{B_t}\left(p_s^{\lambda_0^{-1}B_t}(x,y)\right)^2d\mu(y)\right)^{q-1}\\
\ge&\ \left(P_s^{B_t}1_{B_t}(x)\right)^{2-q}\left(p_{2s}^{\lambda_0^{-1}B_t}(x,x)\right)^{-(1-q)}\ge(1-\varepsilon)^{2-q}\left(\frac{\mu(B_t)}{C}\left(\frac{2s}{W(B_t)}\right)^{1/\nu}\right)^{1-q}\ge\varepsilon'\mu(B_t)^{1-q}
\end{align*}
for all $\frac{t}{4}<s\le\frac{t}{2}$ and $x\in\frac{1}{4}B_t$ by H\"older inequality, $(\mathrm{S}^-)$ and $(\mathrm{Nash}^-)$ successively. Same as in (1), it follows by $(\mathrm{wPH}^-_q)$ that for $\mu$-a.e.\ $x,y\in\frac{1}{4}B_t$,
$$p_t^B(x,y)\ge p_t^{\lambda_0^{-1}B_t}(x,y)\ge C^{-1}\left(\fint_{\left(\frac{t}{4},\frac{t}{2}\right]\times B_t}\left(p_s^{\lambda_0^{-1}B_t}(x,z)\right)^qd\hat{\mu}(s,z)\right)^{1/q}\ge C^{-1}\left(\frac{\varepsilon'\mu(B_t)^{1-q}}{\mu(B_t)}\right)^{1/q}\ge\frac{C^{-2}(\varepsilon')^{1/q}}{V\left(x_0,W^{-1}(x_0,t)\right)},$$
showing $(\mathrm{LLE}^-)$ again. Hence $(\mathrm{LLE})$ follows by Lemma \ref{LE1}.
\end{proof}

Now we show the necessity of the basic assumptions $(\mathrm{VD})$ and $(\mathrm{RVD})$.

\begin{lemma}
$(\mathrm{S}^-)+(\mathrm{Nash^-})\Rightarrow(\mathrm{VD})$.
\end{lemma}

\begin{proof}
For any $B=B(x,r)$ with $r<r_0:=\frac{1}{2}\sigma\overline{R}$ (where $\sigma$ comes from $(\mathrm{Nash^-})$), we obtain by taking $t=\delta'W(B)$ (where $\delta'$ comes from $(\mathrm{S}^-)$) that
$$1-\varepsilon\le P_t^B1_B(x)\le P_t^{2B}1_B(x)\le\mu(B)\mathop{\mathrm{esup}}_{y\in B}p_t^{2B}(x,y)\le\frac{C_N\mu(B)}{\mu(2B)}\left(\frac{W(2B)}{t}\right)^{1/\nu}\le C_{N,W}\frac{\mu(B)}{\mu(2B)}.$$
This is exactly the condition $(\mathrm{VD})$ if $\overline{R}=\infty$; while if $\overline{R}<\infty$, then $M$ is compact since it is the closure of every $B(x,\overline{R})$. Therefore, $M$ could be covered by a finite number of balls $B(x_i,\frac{1}{2}r_0)$, $i=1,\cdots,N$. In particular, for any $B=B(x,r)$ with $r>r_0$, there exists $1\le i\le N$ such that $x\in B(x_i,\frac{1}{2}r_0)$. Therefore, $B(x_i,\frac{1}{2}r_0)\subset B(x,r)$, implying that
$$\frac{\mu(2B)}{\mu(B)}\le\frac{\mu(M)}{V(x_i,\frac{1}{2}r_0)}\le\frac{\mu(M)}{\inf\limits_{1\le i'\le N}V(x_{i'},\frac{1}{2}r_0)}.$$
The right side is a constant decided by the space $(M,d,\mu)$, which is finite since $\mu$ is a Radon measure of full support. Thus $(\mathrm{VD})$ is proved.
\end{proof}

\begin{lemma}
$(\mathrm{VD})+(\mathrm{FK})+(\mathrm{TJ})\Rightarrow(\mathrm{RVD})$.
\end{lemma}

\begin{proof}
Same as in \cite[Lemma 6.9]{hl}.
\end{proof}

\section{Proof of Theorem \ref{M2}}\label{PM2}

\begin{lemma}\label{L1-E}
$(\mathrm{S}^-)+(\mathrm{FK})\Rightarrow(\mathrm{E})$. In particular, $(\mathrm{LLE})\Rightarrow(\mathrm{E})$.
\end{lemma}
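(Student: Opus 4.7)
The plan is to handle the two halves of $(\mathrm{E})$ separately, and then deduce the ``in particular'' statement from Lemma \ref{vLE}.

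For the lower bound $(\mathrm{E}_\ge)$, I would use $(\mathrm{S}^-)$ directly. Since $P_t^B 1_B \ge 1-\varepsilon$ on $\tfrac{1}{4}B$ whenever $0<t\le \delta' W(B)$, for $\mu$-a.e.\ $y\in\tfrac{1}{4}B$ simply
\[
G^B 1_B(y) \;=\; \int_0^\infty P_t^B 1_B(y)\,dt \;\ge\; \int_0^{\delta' W(B)}\!(1-\varepsilon)\,dt \;=\; (1-\varepsilon)\delta'\, W(B),
\]
which is (\ref{E-g}) with $C^{-1}:=(1-\varepsilon)\delta'$. This step uses only $(\mathrm{S}^-)$.

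For the upper bound $(\mathrm{E}_\le)$, I would use $(\mathrm{FK})$ in two ways. First, taking $U=B$ yields a spectral gap $\lambda_1(B)\ge c/W(B)$, hence $\|P_t^B 1_B\|_{L^2(B)}\le e^{-ct/W(B)}\mu(B)^{1/2}$. Second, by Lemma \ref{nash}, $(\mathrm{FK})$ implies $(\mathrm{Nash}^-)$, so the heat kernel exists and satisfies $p_{t}^B(x,x)\le (C/\mu(B))(W(B)/t)^{1/\nu}$; in particular $\|p_t^B(x,\cdot)\|_{L^2(B)}=p_{2t}^B(x,x)^{1/2}\le (C/\mu(B))^{1/2}(W(B)/t)^{1/(2\nu)}$. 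The semigroup identity $P_{2t}^B 1_B(x)=(p_t^B(x,\cdot),P_t^B 1_B)_{L^2(B)}$ together with Cauchy--Schwarz then gives
\[
P_{2t}^B 1_B(x)\;\le\; \|p_t^B(x,\cdot)\|_2\,\|P_t^B 1_B\|_2 \;\le\; C\left(\frac{W(B)}{t}\right)^{1/(2\nu)} e^{-ct/W(B)}.
\]
Splitting $G^B 1_B(x)=\int_0^{W(B)}+\int_{W(B)}^{\infty}$, bounding the first integral by $W(B)$ using $P_t^B 1_B\le 1$, and substituting $s=t/W(B)$ in the second, yields $G^B 1_B(x)\le W(B)+C'W(B)\int_1^\infty s^{-1/(2\nu)}e^{-cs/2}ds\le C''W(B)$. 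The restriction $R<\sigma\bar R$ is inherited from $(\mathrm{Nash}^-)$ (possibly after shrinking $\sigma$).

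The ``in particular'' clause is immediate: Lemma \ref{vLE} gives $(\mathrm{LLE})\Rightarrow(\mathrm{S}^-)+(\mathrm{PI})$, and by the chain (\ref{4-11}) we have $(\mathrm{PI})\Rightarrow(\mathrm{FK})$, so $(\mathrm{LLE})\Rightarrow(\mathrm{S}^-)+(\mathrm{FK})$, and the first part of the lemma applies. The main (if mild) technical obstacle is the combined Nash/spectral-gap estimate in the upper bound, which requires being careful that Lemma \ref{nash} supplies $(\mathrm{Nash}^-)$ from $(\mathrm{FK})$ and that the tail integral in $t$ converges; these are standard once assembled.
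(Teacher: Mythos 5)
Your handling of $(\mathrm{E}_\ge)$ and of the ``in particular'' clause (via Lemma \ref{vLE} and the chain (\ref{4-11})) coincides exactly with the paper's. The difference is in $(\mathrm{E}_\le)$: the paper disposes of $(\mathrm{FK})\Rightarrow(\mathrm{E}_\le)$ by citing \cite[Lemma 12.2]{ghh+1}, whereas you supply a self-contained derivation by splitting the task into a spectral-gap estimate (from $(\mathrm{FK})$ applied with $U=B$, giving $\lambda_1(B)\gtrsim 1/W(B)$ and hence $\|P_t^B1_B\|_{L^2}\le e^{-ct/W(B)}\mu(B)^{1/2}$) and an on-diagonal heat-kernel bound (from $(\mathrm{Nash}^-)$, obtained via Lemma \ref{nash}), glued by the semigroup identity and Cauchy--Schwarz to give $P_{2t}^B1_B(x)\lesssim(W(B)/t)^{1/(2\nu)}e^{-ct/W(B)}$, whose time-integral over $[W(B),\infty)$ converges to $\lesssim W(B)$. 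This is correct, and it is essentially the standard mechanism behind the lemma the paper cites; what your version buys is that the argument stays entirely inside the paper's own toolbox (in particular it reuses the internally-proved Lemma \ref{nash}), at the minor cost of reproducing a calculation the paper outsources. One small point worth noting: both your proof and the paper's implicitly assume the $\varepsilon$ in $(\mathrm{S}^-)$ satisfies $\varepsilon<1$ (otherwise $(\mathrm{S}^-)$ is vacuous); this is consistent with how $(\mathrm{S}^-)$ is produced elsewhere in the paper (e.g.\ $\varepsilon=1-C^{-1}$ in Proposition \ref{s-}), so it is not a genuine gap, but stating it would not hurt.
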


\begin{proof}
Since $(\mathrm{LLE})\Rightarrow(\mathrm{S}^-)+(\mathrm{FK})$ by (\ref{vLE}), it suffices to prove the first assertion.

Recall that $(\mathrm{FK})\Rightarrow(\mathrm{E}_\le)$ by \cite[Lemma 12.2]{ghh+1}. On the other hand, by $(\mathrm{S}^-)$, we have $P_t^B1_B\ge\varepsilon$ on $\frac{1}{4}B$ for every ball $B=B(x_0,R)$ and all $0<t<\delta'W(B)$. It follows that
$$G^B1_B=\int_0^\infty P_t^B1_Bdt\ge\int_0^{\delta'W(B)}P_t^B1_Bdt\ge\varepsilon\delta'W(B)$$
on $\frac{1}{4}B$, that is, $(\mathrm{S}^-)\Rightarrow(\mathrm{E}_\ge)$. The proof is then completed.
\end{proof}

\begin{proposition}\label{v_D}
For any $0<q\le 1$, $(\mathrm{PGL}_q)+(\mathrm{TJ})\Rightarrow(\mathrm{PHR})$.
\end{proposition}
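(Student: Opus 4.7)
The plan is the classical Moser--De Giorgi oscillation iteration, now driven by $(\mathrm{PGL}_q)$ and with the tails controlled by $(\mathrm{TJ})$. Fix a bounded caloric $u$ on $Q_0=(t_0-W(B),t_0]\times B$, write $K:=\sup_{Q_0}|u|+W(B)T_{Q_0}^{(2/3)}(|u|)$, and aim to show geometric decay of the oscillation on a nested family of parabolic cylinders around any target $(t_*,x_*)$ with $x_*\in\delta B$ and $t_*\in(t_0-W(\delta B),t_0]$. Pick $\lambda\in(0,C_0^{-1}]$ (small, to be tuned), let $B_k:=B(x_*,\lambda^{k-1}R/2)$, and set $\tilde Q_k:=(t_*-4\delta_0 W(B_{k+1}),t_*]\times B_k$, where $\delta_0$ is the constant in $(\mathrm{PGL}_q)$. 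By (\ref{W}), shrinking $\lambda$ gives $4W(B_{k+2})\le W(B_{k+1})$, so $\tilde Q_{k+1}$ sits inside the inner cylinder $(t_*-\delta_0 W(B_{k+1}),t_*]\times B_{k+1}$ coming out of step $k$, and $\tilde Q_1\subset Q_0$.

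At level $k$, set $m_k:=\mathop{\mathrm{einf}}_{\tilde Q_k}u$, $M_k:=\mathop{\mathrm{esup}}_{\tilde Q_k}u$. One of the sets $\{u\ge\frac12(m_k+M_k)\}$ or $\{u\le\frac12(m_k+M_k)\}$ covers at least half of the middle cylinder $Q'_k$; after replacing $u$ by $m_k+M_k-u$ if necessary (still supercaloric), apply $(\mathrm{PGL}_q)$ to $w_k:=u-m_k\ge 0$ with $a=(M_k-m_k)/2$ and $\eta=\tfrac12$. This gives
\[M_{k+1}-m_{k+1}\le(1-c_0)(M_k-m_k)+2\overline{T}_k,\qquad c_0:=C_{\mathrm{PGL}}^{-1}2^{-1-1/q},\]
where $\overline{T}_k:=\overline{T}^{(2/3)}_{(t_*-3\delta_0 W(B_{k+1}),t_*]\times B_{k+1}}((w_k)_-)$. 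Iterated, this would yield geometric decay $M_k-m_k\le C^*\theta^k K$ provided the tails themselves decay like $\theta^k K$.

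The pivotal estimate is controlling $\overline{T}_k$ via $(\mathrm{TJ})$. Since $(w_k)_-\equiv 0$ on $B_k$, decompose $B_{k+1}^c=B^c\cup\bigcup_{j=0}^{k-1}(B_j\setminus B_{j+1})$. On the annulus $B_j\setminus B_{j+1}$ ($j<k$) the inductive bound $M_j-m_j\le A_j$ forces $(m_k-u)_+\le A_j$; on $B^c$ one uses $|u|\le K$ together with the tail control by $T_{Q_0}^{(2/3)}(|u|)$. For any $z\in(2/3)B_{k+1}$, a brief geometric check shows $z\in(2\lambda/3)B_{j+1}$ for every $j<k$, so $(\mathrm{TJ})$ gives $\int_{B_{j+1}^c}J(z,dy)\le C_\lambda/W(B_{j+1})$. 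Integrating in time (so the prefactor is $T_k\asymp W(B_{k+1})$) and using the comparison $W(B_{k+1})/W(B_{j+1})\le C_W\lambda^{(k-j)\beta_1}$ provided by (\ref{W}), the dangerous series collapses, under the ansatz $A_j=C^*\theta^jK$, to
\[\overline{T}_k\le CK\lambda^{k\beta_1}\sum_{j=0}^{k-1}(\theta/\lambda^{\beta_1})^j+CK\lambda^{k\beta_1},\]
which is $O(\theta^kK)$ as soon as $\theta>\lambda^{\beta_1}$.

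To close the induction, pick $\theta\in(\max\{1-c_0,\lambda^{\beta_1}\},1)$ (non-empty once $\lambda$ is small) and take $C^*$ large enough so that $(1-c_0)C^*+C''\le C^*\theta$ absorbs the error. Then $M_k-m_k\le C^*\theta^kK$ for all $k$. Since the spatial radius of $\tilde Q_k$ is $\asymp\lambda^{k-1}R$, this geometric oscillation decay upgrades in the standard way to the pointwise H\"older bound of $(\mathrm{PHR}_+)$ with exponent $\gamma=\log\theta/\log\lambda\wedge 1$: for any two points $(s,y),(t,x)\in(t_*-W(\delta B),t_*]\times\delta B$, pick the smallest $k$ such that both lie in $\tilde Q_{k+1}$ after centering at one of them and read off the estimate from $d(x,y)\vee W^{-1}(y,|s-t|)\asymp\lambda^k R$. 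The main obstacle—and the only delicate bookkeeping—is the tail bound: the iteration survives precisely because (\ref{W}) controls $W(B_{k+1})/W(B_{j+1})$ through the lower exponent $\beta_1$ rather than $\beta_2$; with the opposite exponent the admissible range of $\theta$ would shrink to empty and no geometric decay could be extracted.
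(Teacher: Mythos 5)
Your sketch runs on exactly the same engine as the paper's proof: a De~Giorgi oscillation iteration on shrinking parabolic cylinders, $(\mathrm{PGL}_q)$ applied to whichever of $u-m_k$ or $M_k-u$ covers half the intermediate cylinder, an annular decomposition of the exterior to estimate the nonlocal tail, $(\mathrm{TJ})$ to bound each annular jump contribution, and the observation that (\ref{W}) provides decay through the \emph{lower} exponent $\beta_1$ so the geometric series converges. The paper's version normalizes $\sup\|u(t,\cdot)\|_{L^\infty(B)}+W(B)T_{Q_0}^{(2/3)}(|u|)=1$ and runs the induction as $b_k-a_k\le\eta^{k-1}$, but this is cosmetic; the structure is identical.

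There is one slip in your closing step that should be repaired. Under the ansatz $A_j=C^*\theta^j K$, the annular contribution to $\overline{T}_k$ is proportional to $C^*$, i.e.\ the correct form is
\[
\overline{T}_k\le C C^* K\,\lambda^{k\beta_1}\sum_{j=0}^{k-1}(\theta/\lambda^{\beta_1})^j+CK\lambda^{k\beta_1}
\le \frac{CC^*}{\theta\lambda^{-\beta_1}-1}\,\theta^k K+CK\lambda^{k\beta_1},
\]
whereas your displayed bound drops the $C^*$. Consequently the recursion to close is
\[
(1-c_0)+\frac{2C}{\theta\lambda^{-\beta_1}-1}+\frac{2C\lambda^{k\beta_1}}{C^*\theta^k}\le\theta,
\]
and enlarging $C^*$ only helps the last, harmless term; it does nothing for the middle one. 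The induction actually closes by fixing $\theta\in(1-c_0,1)$ first and then shrinking $\lambda$ (equivalently, taking $\Lambda$ large in the paper's notation) so that $\frac{2C}{\theta\lambda^{-\beta_1}-1}<\theta-(1-c_0)$; the requirement $\theta>\lambda^{\beta_1}$ you state is necessary but not sufficient. You already flag that $\lambda$ must be small, so this is a bookkeeping misstatement rather than a missing idea, but as written the ``take $C^*$ large'' step would not complete the argument.
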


\begin{proof}
It suffices to consider the case where $u$ is non-negative on $Q$ and $\|u\|_Q+W(B)^{1-\frac{1}{p}}T_Q^{(\frac{2}{3};p)}(u)=1$.

Let $\Lambda\ge 2\vee\lambda_0^{-1}\vee(4C_W)^{1/\beta_1}$ be a constant to be determined later. Set $B_0=B$ and $Q_0=Q$. For all $x,y\in\Lambda^{-1}B$ and $t,s\in(t_0-\delta W(\Lambda^{-1}B),t_0]$, say $t\ge s$, let
$$B_k=B(x,\Lambda^{-k}R)\quad\mbox{and}\quad Q_k:=(t-\delta W(B_k),t]\times B_k.$$
Then
$$(s,y)\in Q_k\quad\mbox{with}\quad k=\left[\log_\Lambda\left(\frac{R}{d(x,y)}\wedge\frac{R}{W^{-1}(x,(t-s)/\delta)}\right)\right].$$

It suffices to prove there exits $\frac{1}{2}<\eta<1$ such that for all $k\ge 0$,
\begin{equation}\label{indk}
b_k-a_k\le\eta^k,\quad\mbox{where}\quad a_k=\mathop{\mathrm{einf}}\limits_{(s,y)\in Q_k}u(s,y)\quad\mbox{and}\quad b_k=\mathop{\mathrm{esup}}\limits_{(s,y)\in Q_k}u(s,y),
\end{equation}
and then $(\mathrm{PHR}^+)$ follows by observing
$$|u(t,x)-u(s,y)|\le\eta^k\le\eta^{\log_\Lambda\left(\frac{R}{d(x,y)}\wedge\frac{R}{W^{-1}(x,(t-s)/\delta)}\right)-1}\le C(\eta,\delta,\Lambda)\left(\frac{d(x,y)}{R}\vee\frac{W^{-1}(x,t-s)}{R}\right)^{\log_\Lambda\eta^{-1}}.$$

Actually, we prove (\ref{indk}) by induction. It is trivial that $b_0-a_0\le 1$. Now assume $b_i-a_i\le\eta^{i-1}$ for all $i=1,\dotsc,k$. Recall by the range of $\Lambda$ that
$$Q'_{k+1}:=\big(t-4\delta W(B_{k+1}),t\big]\times\lambda_0^{-1}B_{k+1}\subset Q_k.$$
In particular, $a_k\le u\le b_k$ on $Q'_{k+1}$. Denote also
$$Q^-_{k+1}:=\big(t-3\delta W(B_{k+1}),t-2\delta W(B_{k+1})\big]\times B_{k+1}.$$

By considering $(\|u\|_Q-u)$ instead of $u$ if necessary, we assume $$\hat{\mu}(D_{k+1})\ge\frac{1}{2}\hat{\mu}\left(Q^-_{k+1}\right),\quad\mbox{where}\quad D_{k+1}:=\left\{(s,y)\in Q^-_{k+1}:u(s,y)\le\frac{a_k+b_k}{2}\right\}.$$
Applying $(\mathrm{PGL}_q)$ to $(b_k-u)$ on $Q'_{k+1}$ with $\lambda=\lambda_0$, we see on $Q_{k+1}$ that
\begin{align*}
b_k-u\ge&\ C_q^{-1}\left(\frac{1}{2}\right)^{1/q}\left(b_k-\frac{b_k+a_k}{2}\right)-2\int_{t-4\delta W(B_{k+1})}^tT_{\lambda_0^{-1}B_{k+1}}^{(2/3)}((b_k-u)_-;s')ds'\\
=&\ \frac{b_k-a_k}{2^{1+1/q}C_q}-2\int_{t-4\delta W(B_{k+1})}^t\mathop{\mathrm{esup}}\limits_{\frac{2}{3}\lambda_0^{-1}B_{k+1}}\left\{\int_{B^c}+\int_{B\setminus B_k}\right\}(b_k-u)_-(s',w)J(\cdot,dw)\\
\ge&\ \frac{b_k-a_k}{2^{1+1/q}C_q}-2\left(4\delta W(B_{k+1})\right)^{1-\frac{1}{p}}T_Q^{(\frac{2}{3};p)}(u)-8\delta W(B_{k+1})\mathop{\mathrm{esup}}\limits_{\frac{2}{3}\lambda_0^{-1}B_{k+1}}\sum\limits_{i=0}^{k-1}\int_{B_i\setminus B_{i+1}}(b_k-u)_-(s',w)J(\cdot,dw).
\end{align*}
Provided that $\Lambda^{\beta_1(1-\frac{1}{p})}>\eta^{-1}$, it follows by $(\mathrm{TJ})$ and the inductive assumption that
\begin{align*}
b_{k+1}-a_{k+1}=&\ \mathop{\mathrm{esup}}\limits_{Q_{k+1}}u-a_{k+1}\le b_k-\mathop{\mathrm{einf}}\limits_{Q_{k+1}}(b_k-u)-a_k\\
\le&\ b_k-a_k-\frac{b_k-a_k}{2^{1+1/q}C_q}+2\left(4\delta W(B_{k+1})\right)^{1-\frac{1}{p}}T_Q^{(\frac{2}{3};p)}(u)+8\delta W(B_{k+1})\mathop{\mathrm{esup}}\limits_{\frac{2}{3}\lambda_0^{-1}B_{k+1}}\sum\limits_{i=0}^{k-1}\int_{B_i\setminus B_{i+1}}(b_i-a_i)J(\cdot,dw)\\
\le&\ (1-\varepsilon_q)(b_k-a_k)+2\left(\frac{4\delta W(B_{k+1})}{W(B)}\right)^{1-\frac{1}{p}}+8\delta W(B_{k+1})\sum\limits_{i=0}^{k-1}(b_i-a_i)\frac{C_{3/8}}{W(B_{i+1})}\\
\le&\ (1-\varepsilon_q)\eta^k+C_{\delta,p}\Lambda^{-(k+1)(1-\frac{1}{p})\beta_1}+C_\delta\sum\limits_{i=0}^{k-1}\eta^i\Lambda^{-(k-i)\beta_1}\le(1-\varepsilon_q)\eta^k+C_{\delta,p}\Lambda^{-(1-\frac{1}{p})\beta_1}\eta^k+\frac{C_\delta}{\eta\Lambda^{\beta_1}-1}\eta^k,
\end{align*}
where $\varepsilon_q=2^{-1-1/q}C_q^{-1}$. Therefore, we obtain $b_{k+1}-a_{k+1}\le\eta^{k+1}$ by letting
$$\eta=1-\frac{\varepsilon_q}{3}\quad\mbox{and}\quad\Lambda\ge\left(3C_{\delta,p}\varepsilon_q^{-1}\right)^{\beta_1^{-1}/(1-\frac{1}{p})}\vee\left(\eta^{-1}\left(1+3C_\delta\varepsilon_q^{-1}\right)\right)^{\beta_1^{-1}}.$$
The proof is then completed.
\end{proof}

\begin{lemma}\label{in-hr}
$(\mathrm{PHR})+(\mathrm{TJ})\Rightarrow(\mathrm{PHR}^0)$ and $(\mathrm{EHR})+(\mathrm{TJ})\Rightarrow(\mathrm{EHR}^0)$.
\end{lemma}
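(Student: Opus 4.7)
The plan is to observe that both implications follow immediately from $(\mathrm{TJ})$, which allows the tail term appearing in $(\mathrm{PHR}_+)$ (respectively $(\mathrm{EHR}_+)$) to be absorbed into the global $L^\infty$ norm that appears in $(\mathrm{PHR})$ (respectively $(\mathrm{EHR})$). In particular, there is nothing analytically hard to do here: the hypotheses on $u$ in the conclusion side are strictly stronger, so the whole content of the lemma is a quantitative comparison between $W(B)T^{(2/3)}_{Q_0}(|u|)$ and $\sup_{t'}\|u(t',\cdot)\|_{L^\infty(M)}$.

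For the parabolic case, I would let $u$ be caloric on $Q_0=(t_0-W(B),t_0]\times B$ and bounded on $(t_0-W(B),t_0]\times M$, and set
$$K:=\sup_{t_0-W(B)<t'\le t_0}\|u(t',\cdot)\|_{L^\infty(M)}.$$
Since $|u(s,y)|\le K$ for $\mu$-a.e.\ $y\in B^c$ and all $s$ in the relevant range, $(\mathrm{TJ})$ applied with $\lambda=2/3$ yields
$$T^{(2/3)}_B(|u|;s)=\mathop{\mathrm{esup}}_{z\in(2/3)B}\int_{B^c}|u(s,y)|J(z,dy)\le K\mathop{\mathrm{esup}}_{z\in(2/3)B}\int_{B^c}J(z,dy)\le\frac{C_{\mathrm{TJ}}K}{W(B)},$$
so $W(B)T^{(2/3)}_{Q_0}(|u|)\le C_{\mathrm{TJ}}K$. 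Trivially also $\sup_{t_0-W(x_0,R)<t'\le t_0}\|u(t',\cdot)\|_{L^\infty(B)}\le K$. Plugging these two bounds into the right-hand side of $(\mathrm{PHR}_+)$ gives
$$|u(t,x)-u(s,y)|\le C(1+C_{\mathrm{TJ}})\left(\frac{d(x,y)\vee W^{-1}(y,|s-t|)}{R}\right)^\gamma K,$$
which is exactly $(\mathrm{PHR})$ with the same $\delta,\gamma$ and constant $C(1+C_{\mathrm{TJ}})$.

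The elliptic case is analogous and in fact slightly simpler. For $f$ harmonic on $B$ and bounded on $M$, the estimate $W(B)T^{(2/3)}_B(|f|)\le C_{\mathrm{TJ}}\|f\|_{L^\infty(M)}$ follows directly from $(\mathrm{TJ})$, after which $(\mathrm{EHR}_+)$ yields $(\mathrm{EHR})$ with constant $C(1+C_{\mathrm{TJ}})$. There is no real obstacle to overcome; the only point worth remembering is that we must use $(\mathrm{TJ})$ with $\lambda=2/3$ (matching the tail index appearing in the definitions of $(\mathrm{PHR}_+)$ and $(\mathrm{EHR}_+)$), and that the constant produced depends only on $C_{2/3}$ from $(\mathrm{TJ})$ and on the constant from the $(\cdot)_+$ version.
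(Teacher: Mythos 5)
Your proof is correct and takes essentially the same approach as the paper: bound $W(B)T^{(2/3)}_B(|u|)$ by $C_{2/3}\|u\|_{L^\infty(M)}$ using $(\mathrm{TJ})$, then absorb it into the $L^\infty$ norm on the right-hand side of the $(\cdot)_+$ condition.
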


\begin{proof}
With $Q=(t_0-W(B),t_0]\times B$, clearly $\|u\|_Q\le\|u\|_{(t_0-W(B),t_0]\times M}$. Further, for any $p\in(1,\infty]$, 
\begin{align*}
W(B)^{1-\frac{1}{p}}T_Q^{(\lambda;p)}(u)=&\ W(B)^{1-\frac{1}{p}}\left(\int_{t_0-W(B)}^{t_0}\left(\mathop{\mathrm{esup}}\limits_{x\in\lambda B}\int_{B^c}|u(y)|J(x,dy)\right)^pds\right)^{1/p}\\
\le&\ W(B)^{1-\frac{1}{p}}\|u\|_{(t_0-W(B),t_0]\times M}\left(\int_{t_0-W(B)}^{t_0}\left(\mathop{\mathrm{esup}}\limits_{x\in\lambda B}\int_{B^c}J(x,dy)\right)^pds\right)^{1/p}\\
\le&\ W(B)^{1-\frac{1}{p}}\|u\|_{(t_0-W(B),t_0]\times M}\left(W(B)\left(\frac{C_\lambda}{W(B)}\right)^p\right)^{1/p}=C_\lambda\|u\|_{(t_0-W(B),t_0]\times M},
\end{align*}
where $C_\lambda$ comes from $(\mathrm{TJ})$, showing $(\mathrm{PHR})+(\mathrm{TJ})\Rightarrow(\mathrm{PHR}^0)$ clearly.

The elliptic analog follows with $p=\infty$ and $u(t,x)=f(x)$.
\end{proof}

\begin{proposition}\label{e-lle}
$(\mathrm{EHR})+(\mathrm{E})\Rightarrow(\mathrm{LLE})$.
\end{proposition}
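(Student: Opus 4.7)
The strategy is two-stage: first establish an integrated lower bound on the Dirichlet heat kernel $p_t^B$, and then upgrade it to the pointwise bound $(\mathrm{LLE})$ via elliptic H\"older regularity of a suitable harmonic object.

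First I would extract the structural consequences of $(\mathrm{E})$. By (\ref{13-4}), $(\mathrm{E})\Rightarrow(\mathrm{S}^-)$, while by \cite[Lemma 12.2]{ghh+1} the lower half $(\mathrm{E}_\le)$ yields $(\mathrm{FK})$; Lemma~\ref{nash} then gives $(\mathrm{Nash}^-)$. In particular the Dirichlet heat kernel $p_t^B(x,y)$ exists on the appropriate scale with the Nash-type upper bound $p_t^B(x,y)\le C/V(x_0,W^{-1}(x_0,t))$.

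Next, for $t\le W(\delta_L B)$ I would pass to the auxiliary ball $B_1:=B(x_0,\kappa W^{-1}(x_0,t))$, with $\kappa$ large enough to force $B_1\Subset B$ and $t\le\delta' W(B_1)$. Applying $(\mathrm{S}^-)$ on $B_1$ gives $P_t^{B_1}1_{B_1}(x)\ge 1-\varepsilon$ for $\mu$-a.e.\ $x\in\tfrac{1}{4}B_1$. Pairing this integrated lower bound with the upper bound $p_t^{B_1}(x,y)\le C/\mu(B_1)$ via a level-set argument, I obtain, for each such $x$, a measurable set $A_x\subset B_1$ with $\mu(A_x)\ge c_1\mu(B_1)$ on which $p_t^{B_1}(x,y)\ge c_2/\mu(B_1)$.

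To promote this in-measure estimate to a pointwise one, I would consider the Green potential $h(y):=G^{B_1}1_{A_x}(y)=\int_0^\infty P_s^{B_1}1_{A_x}(y)\,ds$, which is non-negative on $M$, vanishes outside $B_1$, is harmonic on $B_1\setminus\overline{A_x}$ in the sense of $\tilde{\mathcal{E}}$, and by $(\mathrm{E}_\le)$ is bounded above by $CW(B_1)$. The measure bound on $A_x$ together with $(\mathrm{E}_\ge)$ ensures that $h$ is comparable to $W(B_1)$ on a definite portion of $\tfrac{1}{4}B_1$. Applying $(\mathrm{EHR})$ to $h$ on small balls avoiding $\overline{A_x}$, together with a chaining argument controlled by $(\mathrm{VD})$, propagates this to a pointwise lower bound $h(y)\ge c_3 W(B_1)$ for $\mu$-a.e.\ $y\in\tfrac{1}{4}B_1$. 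The Chapman--Kolmogorov identity $p_{2t}^{B_1}(x,y)\ge\int_{A_x}p_t^{B_1}(x,z)p_t^{B_1}(z,y)\,d\mu(z)$, combined with the earlier in-measure bound on $A_x$ and a short-time decomposition of the time integral defining $h$, would then deliver $(\mathrm{LLE}_-)$, and Lemma~\ref{LE1} concludes.

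The main obstacle is expected to be the H\"older-upgrade step: $(\mathrm{EHR})$ applies only to globally bounded harmonic functions on a ball, whereas $h$ is harmonic only off $\overline{A_x}$. Executing the chaining therefore requires cleanly isolating $\overline{A_x}$ inside $B_1$ and uniformly controlling $\|h\|_\infty$, and may require replacing $A_x$ by a thinner subset or by an appropriate harmonic-measure-type substitute in order to apply $(\mathrm{EHR})$ cleanly.
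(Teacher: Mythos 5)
There are two genuine gaps, one outright error and one acknowledged obstacle that is precisely where the paper's argument differs essentially from yours.

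First, the claim that ``$(\mathrm{E}_\le)$ yields $(\mathrm{FK})$ by \cite[Lemma 12.2]{ghh+1}'' reverses the direction of that lemma: the cited result gives $(\mathrm{FK})\Rightarrow(\mathrm{E}_\le)$, not the converse. The converse is false in general --- $(\mathrm{E}_\le)$ controls $G^B 1_B$ but says nothing about $\lambda_1(U)$ for small open $U\subsetneq B$, which is exactly what $(\mathrm{FK})$ asks for. This is not a cosmetic slip: the paper's proof expends its entire first step showing that the \emph{combination} $(\mathrm{EHR})+(\mathrm{E}_\le)$ implies $(\mathrm{FK})$, and does so via a decomposition $G^{B_y}1_U = \bigl(G^{B_y}-G^{\varepsilon B_y}\bigr)1_U + G^{\varepsilon B_y}1_U$, where the first summand is harmonic on $\varepsilon B_y$ (so that $(\mathrm{EHR})$ applies to it), and both summands are controlled in $L^\infty$ by $(\mathrm{E}_\le)$. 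Without this step you do not have $(\mathrm{Nash}^-)$, and the rest of your plan cannot start.

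Second, the ``harmonicity upgrade'' you flag as the main obstacle is indeed where your plan stalls, and the paper resolves it with a trick you do not use. Rather than trying to make a Green potential $G^{B_1}1_{A_x}$ locally harmonic and chaining, the paper writes $p_t^{B'}(x,\cdot)=-G^{B'}f_t$ with $f_t=\partial_t p_t^{B'}(x,\cdot)$ (from \cite[Proof of Lemma 4.8]{ckw-p}), and again splits as a Green-operator difference, $G^{B'}f_t=(G^{B'}-G^{B(x,r)})f_t+G^{B(x,r)}f_t$: the first term is genuinely harmonic on $B(x,r)$ and bounded, so $(\mathrm{EHR})$ applies to it cleanly, while the second is a small-ball Green potential controlled by $\|G^{B(x,r)}1\|_\infty\|f_t\|_{L^\infty}$ via $(\mathrm{E}_\le)$ and $(\mathrm{Nash}^-)$. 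Optimizing over $r$ yields the oscillation bound (\ref{dle}) for $p_t^{B'}(x,\cdot)$, which, combined with the on-diagonal lower bound $p_t^B(x,x)\ge(1-\varepsilon)^2/V(x,W^{-1}(x,t))$ from $(\mathrm{S}^-)$, gives $(\mathrm{LLE})$ directly --- no Chapman--Kolmogorov, no level-set chaining, no truncated Green potential. Your level-set-plus-chaining route could in principle be made to work, but only by importing some version of this Green-difference device; as written, the harmonicity defect of $G^{B_1}1_{A_x}$ on $B_1$ is not circumvented and the chaining step does not go through.
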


\begin{proof}
The proof inherits \cite[Subsection 4.2]{ckw-p} with slight modification.

(1) We show $(\mathrm{EHR})+(\mathrm{E}_\le)\Rightarrow(\mathrm{FK})$, so that $(\mathrm{Nash}^-)$ follows by Lemma \ref{nash}.

Fix $B=B(x_0,R)$ with $0<R<\frac{\sigma}{2}\overline{R}$, where $\sigma$ comes from $(\mathrm{E}_\le)$. For every open $U\subset B$, with any $y\in U$, set
$$B_y:=B(y,2R)\supset B\quad\mbox{and}\quad h_U:=G^{B_y}1_U-G^{\varepsilon B_y}1_U.$$
Clearly $h_U$ is harmonic in $\varepsilon B$ for every $\varepsilon\in(0,\delta)$, where $\delta$ comes from $(\mathrm{EHR})$. Thus
\begin{align*}
\mathop{\mathrm{esup}}\limits_{\varepsilon^2B_y}G^{B_y}1_U&\ \le\mathop{\mathrm{esup}}\limits_{\varepsilon^2B_y}h_U+\mathop{\mathrm{esup}}\limits_{\varepsilon^2B_y}G^{\varepsilon B_y}1_U\le\fint_{\varepsilon^2B_y}h_Ud\mu+\mathop{\mathrm{eosc}}\limits_{2B_y}h_U+\mathop{\mathrm{esup}}\limits_{\varepsilon^2B_y}G^{\varepsilon B_y}1_U\\
&\ \le\frac{1}{\mu(\varepsilon^2B_y)}\int_UG^{B_y}1d\mu+C\varepsilon^\gamma\mathop{\mathrm{esup}}\limits_{B_y}G^{B_y}1+\mathop{\mathrm{esup}}\limits_{\varepsilon^2B_y}G^{\varepsilon B_y}1\\
&\ \le\left(\frac{\mu(U)}{\mu(\varepsilon^2B_y)}+C\varepsilon^\gamma\right)C_{\mathrm{E}}W(B_y)+C_{\mathrm{E}}W(\varepsilon B_y)\le C'W(B)\left(\frac{\mu(U)}{\mu(B)}\varepsilon^{-2\alpha}+\varepsilon^{\gamma\wedge\beta_1}\right),
\end{align*}
where we use $(\mathrm{EHR})$ and $0\le h_U\le G^{B_y}1_U$ on the second line, and use $(\mathrm{E}_\le)$ on the third.

If $\frac{\mu(U)}{\mu(B)}\le\delta^{2\alpha+\gamma\wedge\beta_1}$, then by optimizing the right side over $\varepsilon$, we obtain
$$\lambda_1(U)^{-1}\le\mathop{\mathrm{esup}}\limits_{U}G^U1_U\le\sup\limits_{y\in U}\mathop{\mathrm{esup}}\limits_{\varepsilon^2B_y}G^{B_y}1_U\le C'W(B)\left(\frac{\mu(U)}{\mu(B)}\right)^{1/(1+\frac{2\alpha}{\gamma\wedge\beta_1})}.$$
The same is trivially true if $\frac{\mu(U)}{\mu(B)}>\delta^{2\alpha+\gamma\wedge\beta_1}$. Thus $(\mathrm{FK})$ is proved with $\nu=\big(1+\frac{2\alpha}{\gamma\wedge\beta_1}\big)^{-1}$.

(2) For $t<W(B)$, set $B'=B\big(x_0,W^{-1}(x_0,t)\big)$. For all $x\in\frac{1}{2}B'$ and $0<r'<\frac{1}{2}W^{-1}(x_0,t)$, we prove
\begin{equation}\label{dle}
\mathop{\mathrm{eosc}}\limits_{B(x,r')}p_t^{B'}(x,\cdot)\le\frac{C''}{V\left(x_0,W^{-1}(x_0,t)\right)}\left(\frac{r'}{W^{-1}(x_0,t)}\right)^{\frac{\beta_1\gamma}{\beta_1+\gamma}}.
\end{equation}

Indeed, we clearly have $B(x,r)\subset B'$ for any $r'<r<\frac{1}{2}W^{-1}(x_0,t)$. Further,
$$p_t^{B'}(x,\cdot)=-G^{B'}f_t\quad\mbox{and}\quad\|f_{2t}\|_{L^\infty}\le\frac{1}{t}\mathop{\mathrm{esup}}\limits_{y\in B'}p_t^{B'}(y,y),\quad\mbox{where}\quad f_t=\partial_tp_t^{B'}(x,\cdot)$$
by \cite[Proof of Lemma 4.8]{ckw-p}. Therefore, by $(\mathrm{EHR})$, $(\mathrm{E}_\le)$ and $(\mathrm{Nash}^-)$,
\begin{align*}
\mathop{\mathrm{eosc}}\limits_{B(x,r')}p_t^{B'}(x,\cdot)\le&\ \mathop{\mathrm{eosc}}\limits_{B(x,r')}\left(G^{B'}f_t-G^{B(x,r)}f_t\right)+\mathop{\mathrm{eosc}}\limits_{B(x,r')}G^{B(x,r)}f_t\\
\le&\ C\left(\frac{r'}{r}\right)^\gamma\left\|G^{B'}f_t-G^{B(x,r)}f_t\right\|_\infty+2\left\|G^{B(x,r)}1\right\|_\infty\|f_t\|_{L^\infty(B(x,r))}\\
\le&\ 2C\left(\frac{r'}{r}\right)^\gamma\mathop{\mathrm{esup}}\limits_{x',y'\in B'}p_t^{B'}(x',y')+2C\frac{1}{t}W(x,r)\mathop{\mathrm{esup}}\limits_{y\in B'}p_{\frac{t}{2}}^{B'}(y,y)\\
\le&\ \frac{C'}{\mu(B')}\left\{\left(\frac{r'}{r}\right)^\gamma+\left(\frac{r}{W^{-1}(x,t)}\right)^{\beta_1}\right\}\le\frac{C''}{V\left(x_0,W^{-1}(x_0,t)\right)}\left\{\left(\frac{r'}{r}\right)^\gamma+\left(\frac{r}{W^{-1}(x_0,t)}\right)^{\beta_1}\right\}.
\end{align*}
Then (\ref{dle}) follows by optimizing the right side over $r\in\left(r',W^{-1}(x_0,t)\right)$.

(3) Note that $(\mathrm{S}^-)$ holds by (\ref{13-4}). Thus with $B'=B\left(x,W^{-1}(x,t/\delta')\right)\subset B$,
$$p_t^B(x,x)\ge p_t^{B'}(x,x)=\int_{B'}\left(p_{\frac{t}{2}}^{B'}(x,y)\right)^2d\mu(y)\ge\frac{1}{\mu(B')}\left(\int_{B'}p_{\frac{t}{2}}^{B'}(x,y)d\mu(y)\right)^2\ge\frac{(1-\varepsilon)^2}{V\left(x,W^{-1}(x,t)\right)}.$$
$(\mathrm{LLE})$ follows easily by combining this inequality with (\ref{dle}).
\end{proof}

\section{Caloric extension and tails in Harnack inequalities}\label{Tail}

\begin{proposition}\label{wph+t}
$(\mathrm{wPH}_q^0)\Leftrightarrow(\mathrm{wPH}_q)$ for every $0<q<1$.
\end{proposition}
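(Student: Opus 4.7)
The direction $(\mathrm{wPH}_q)\Rightarrow(\mathrm{wPH}_q^0)$ is immediate, since if $u\ge 0$ on $M$ then $u_-\equiv 0$ and the tail term in $(\mathrm{wPH}_q)$ vanishes. For the nontrivial converse, the plan is to replace $u$ by a globally non-negative supercaloric auxiliary function $v$ built as in the proof of Lemma~\ref{comp}, then invoke $(\mathrm{wPH}_q^0)$ on $v$ and translate back.

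Given $u$ non-negative, bounded and supercaloric on $Q=(t_0-4\delta W(\lambda B),t_0]\times B$, we may assume the tail term on the right of $(\mathrm{wPH}_q)$ is finite (otherwise the inequality is trivial). For a starting time $t_*$ to be chosen, define
$$v(t,x):=u_+(t,x)+2\int_{t_*}^t T_B^{(2/3)}(u_-;s)\,ds,\qquad t\in(t_*,t_0].$$
This is exactly the function $u_{t_*}^{(2/3)}$ appearing inside the proof of Lemma~\ref{comp}, and that calculation shows $v$ is supercaloric on $(t_*,t_0]\times\frac{2}{3}B$; by construction $v\ge u_+\ge 0$ globally, so $v$ is admissible as test function in $(\mathrm{wPH}_q^0)$.

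Apply $(\mathrm{wPH}_q^0)$ to $v$ on $(t_*,t_0]\times\frac{2}{3}B$, viewing $\frac{2}{3}B$ as the outer ball and $\lambda B=\frac{3\lambda}{2}\cdot\frac{2}{3}B$ as the inner ball; the extra factor $\frac{3}{2}$ is absorbed by tightening the constant $C_0$ appearing in $(\mathrm{wPH}_q)$. With the time parameter of $(\mathrm{wPH}_q^0)$ aligned so that its sub-cylinders match $Q'$ and $Q''$ of $(\mathrm{wPH}_q)$, the output is
$$\left(\fint_{Q'}v^q\,d\hat{\mu}\right)^{1/q}\le C\mathop{\mathrm{einf}}_{Q''}v.$$
Because $u\ge 0$ on $Q$ forces $u_+=u$ there, the definition of $v$ gives $v\ge u$ on $Q'$ and $v\le u+2\overline{T}^{(2/3)}_{(t_*,t_0]\times B}(u_-)$ on $Q''$. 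Substituting these two bounds into the displayed inequality yields $(\mathrm{wPH}_q)$ for $u$.

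The principal technical obstacle is the simultaneous alignment of three parameters: the sub-cylinders $Q',Q''$ of $(\mathrm{wPH}_q)$ and of $(\mathrm{wPH}_q^0)$ for $v$ must coincide (which forces a relation between $\delta$ in $(\mathrm{wPH}_q)$ and the analogue $\delta_0$ in $(\mathrm{wPH}_q^0)$); the length $t_0-t_*$ of the time domain of $v$ must equal $4\delta_0W(\lambda B)$ for $(\mathrm{wPH}_q^0)$ to be applicable; and the induced tail window $(t_*,t_0]$ should match $(t_0-3\delta W(\lambda B),t_0]$. The natural choice $\delta_0=\delta$ and $t_*=t_0-4\delta W(\lambda B)$ synchronises the first two but produces the slightly larger tail window $(t_0-4\delta W(\lambda B),t_0]$; one then invokes the standard covering remark recorded after the definitions (that $(\mathrm{wPH}_q)$ is stable under shrinking $\delta$) to rescale $\delta$ and re-absorb the factor $4/3$ into the constant $C$, matching the declared form of $(\mathrm{wPH}_q)$.
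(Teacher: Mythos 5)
Your proposal is correct and follows essentially the same route as the paper: both build the auxiliary function from the proof of Lemma \ref{comp} (the paper uses $\bar u = u_{t_0-4\delta W(\lambda B)}^{(C_0\lambda)}$ while you use the $(2/3)$-cutoff version, a purely cosmetic difference since $C_0\lambda\le 2/3$), apply $(\mathrm{wPH}_q^0)$ to the globally non-negative supercaloric correction, and translate back via the same two pointwise bounds. The $4\delta$-versus-$3\delta$ tail-window mismatch you flag is in fact present in the paper's own proof as well and is harmless; your remark about absorbing it is fine.
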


\begin{proof}
It suffices to prove ``$\Rightarrow$'', since ``$\Leftarrow$'' is trivial. Fix $0<\lambda\le\frac{2}{3}\lambda_0$, and set $w':=W(\lambda B)$.

For every bounded, non-negative and supercaloric $u$ on $Q=(t_0-4\delta w',t_0]\times B$, if $\overline{T}_Q^{(2/3)}(u_-)=\infty$, then $(\mathrm{wPH}_q)$ holds trivially for $u$; while if $\overline{T}_Q^{(2/3)}(u_-)<\infty$, then by the proof of Lemma \ref{comp}, $\bar{u}:=u_{t_0-4\delta w'}^{\lambda_0^{-1}\lambda B;B}$ is bounded and supercaloric on $(t_0-4\delta w',t_0]\times\frac{\lambda}{\lambda_0}B$, globally non-negative. Applying $(\mathrm{wPH}_q^0)$ to $\bar{u}$, we obtain
$$\left(\fint_{Q_-}u^qd\hat{\mu}\right)^{1/q}\le\left(\fint_{Q_-}\left(\bar{u}\right)^qd\hat{\mu}\right)^{1/q}\le C\mathop{\mathrm{einf}}\limits_{Q_+}\bar{u}=C\mathop{\mathrm{einf}}\limits_{Q_+}\left(u+2\int_{t_0-4\delta w'}^tT_B^{(\lambda/\lambda_0)}(u_-,s)ds\right)\le C\mathop{\mathrm{einf}}\limits_{Q_+}u+2C\overline{T}_Q^{(2/3)}(u_-),$$
where $Q_-=\left(t_0-3\delta w',t_0-2\delta w'\right]\times\lambda B$ and $Q_+=\left(t_0-\delta w',t_0\right]\times\lambda B$. That is, $(\mathrm{wPH}_q)$ holds with $\delta$ unchanged, $\lambda_0$ replaced by $\frac{2}{3}\lambda_0$, and $C$ replaced by $2C$.
\end{proof}

Given every two open sets $U,\Omega$ in $M$ with $U\Subset\Omega$, for every $\hat{\mu}$-measurable $f$ such that $\overline{T}_{(t_1,t_2]\times\Omega}^U(f)<\infty$ and $\mathrm{supp}(f(t,\cdot))\subset\Omega^c$ for all $t_1<t\le t_2$, define the \emph{caloric extension} of $f$ into $(t_1,t_2]\times U$ as
\begin{equation}\label{ext-d}
f_{t_1,U}(t,x)=\begin{cases}f(t,x),&\mbox{if }x\in U^c;\\
2\int_{t_1}^t\int_U\int_{\Omega^c}f(s,y)J(z,dy)p_{t-s}^U(x,dz)ds,&\mbox{if }x\in U\end{cases}
\end{equation}
on $(t_1,t_2]\times M$. Note that $f_{t_1,U}\equiv 0$ in $U$ if $(\mathcal{E},\mathcal{F})$ is strongly local.

\begin{lemma}\label{ext-F}
Under the assumptions above,

(i) $f_{t_1,U}$ is bounded on $(t_1,t_2]\times U$. To be exact, for $\mu$-a.e.\ $x\in\Omega$,
\begin{equation}\label{ext-infty}
\left|f_{t_1,U}(t,x)\right|\le 2\overline{T}_{(t_1,t]\times\Omega}^U(f);
\end{equation}

(ii) if $T_{(t_1,t_2]\times\Omega}^{U;2}(f)<\infty$, then $1_\Omega f_{t_1,U}(t,\cdot)\in\mathcal{F}(U)$ for all $t_1<t\le t_2$;

(iii) if further $f(t,\cdot)\in\mathcal{F}'(U)$ for all $t_1<t\le t_2$, then $f_{t_1,U}\in\mathcal{F}'(U)$, and is caloric on $(t_1,t_2]\times U$.
\end{lemma}

Note in particular that if $f(t,\cdot)\in\mathcal{F}'$ for all $t$, then $f_{t_1,U}\in\mathcal{F}'$; while if $f(t,\cdot)\in\mathcal{F}$ for all $t$, then $f_{t_1,U}\in\mathcal{F}$.

\begin{proof}
For simplicity we set
$$T^f_s(z)=\int_{\Omega^c}f(s,y)J(z,dy).$$

(i) By direct computation:
$$\left|f_{t_1,U}(t,x)\right|=2\left|\int_{t_1}^t\int_UT^f_s(z)p_{t-s}^U(x,dz)ds\right|\le 2\int_{t_1}^t\left(\mathop{\mathrm{esup}}\limits_UT^{|f|}_s\right)\int_Up_{t-s}^U(x,dz)ds\le 2\overline{T}_{(t_1,t]\times\Omega}^U(f).$$

(ii) Since $U\Subset\Omega$, we see by (i) that $f_{t_1,U}\in L^\infty(U)\subset L^2(U)$. Recall that for all $\sigma>0$, the bilinear form
$$\mathcal{E}_\sigma^U:(\varphi,\phi)\mapsto\sigma^{-1}(\varphi-P_\sigma^U\varphi,\phi)$$
is symmetric and non-negative definite on $L^2(U)$. Therefore,
\begin{align*}
\mathcal{E}_\sigma^U(1_Uf_{t_1,U})=&\ 4\int_{t_1}^t\int_{t_1}^t\mathcal{E}_\sigma^U\bigg(P_{t-s}^UT^f_s,P_{t-s'}^UT^f_{s'}\bigg)dsds'=4\int_{t_1}^t\int_{t_1}^t\mathcal{E}_\sigma^U\bigg(P_{t-s'}^UT^f_s,P_{t-s}^UT^f_{s'}\bigg)dsds'\\
\le&\ 2\int_{t_1}^t\int_{t_1}^t\left\{\mathcal{E}_\sigma^U\bigg(P_{t-s'}^UT^f_s,P_{t-s'}^UT^f_s\bigg)+\mathcal{E}_\sigma^U\bigg(P_{t-s}^UT^f_{s'},P_{t-s}^UT^f_{s'}\bigg)\right\}dsds'\\
\le&\ 4\int_{t_1}^t\int_{t_1}^t\mathcal{E}\bigg(P_{t-s'}^UT^f_s,P_{t-s'}^UT^f_s\bigg)dsds'=4\int_{t_1}^t\int_{t_1}^t\left(-\frac{1}{2}\frac{\mathrm{d}}{\mathrm{d}\tau}\bigg|_{\tau=t-s'}\bigg(P_\tau^UT^f_s,P_\tau^UT^f_s\bigg)_{L^2(U)}\right)ds'ds\\
\le&\ 2\int_{t_1}^t\bigg(T^f_s,T^f_s\bigg)_{L^2(U)}ds\le 2\int_{t_1}^t\left(\mathop{\mathrm{esup}}\limits_{z\in U}T^{|f|}_s\right)^2\mu(U)ds\le 2\mu(U)T_{(t_1,t]\times\Omega}^{U;2}(f)<\infty
\end{align*}
uniformly on $\sigma>0$. Thus by \cite[Lemma 1.3.4]{fot},
$$\mathcal{E}_\sigma^U(1_Uf_{t_1,U})\uparrow\mathcal{E}(1_Uf_{t_1,U})<\infty\quad\mbox{as}\quad\sigma\downarrow 0,$$
showing that $1_Uf_{t_1,U}\in\mathcal{F}(U)$.

(iii) Observe that $f_{t_1,U}=f+1_Uf_{t_1,U}$. Hence $f_{t_1,U}\in\mathcal{F}'(U)$ by (ii). Further,
\begin{align*}
\frac{\mathrm{d}}{\mathrm{d}t}\left(f_{t_1,U}(t,\cdot),\varphi\right)=&\ 2\frac{\mathrm{d}}{\mathrm{d}t}\int_{t_1}^t\left(P_{t-s}^UT^f_s,\varphi\right)ds=2(T^f_s,\varphi)+2\int_{t_1}^t\left(\varphi,\partial_tP_{t-s}^UT^f_s\right)ds\\
=&\ 2\int_U\int_{\Omega^c}\varphi(x)f(t,y)J(x,dy)d\mu(x)-2\int_{t_1}^t\mathcal{E}\left(P_{t-s}^UT^f_s,\varphi\right)ds\\
=&\ -\mathcal{E}(f(t,\cdot),\varphi)-\mathcal{E}(1_Uf_{t_1,U},\varphi)=-\tilde{\mathcal{E}}(f_{t_1,U},\varphi)
\end{align*}
for all $\varphi\in\mathcal{F}(U)$, that is, $f_{t_1,U}$ is caloric.
\end{proof}

\begin{proposition}\label{phi+t}
$(\mathrm{PHI}^0)\Leftrightarrow(\mathrm{PHI})$.
\end{proposition}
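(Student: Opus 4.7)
The direction $(\mathrm{PHI})\Rightarrow(\mathrm{PHI}^0)$ is trivial since $u_-\equiv 0$ kills the tail. For the converse, the plan is, given a caloric $u$ that is only non-negative on the working cylinder, to add a caloric correction built from the caloric extension of $u_-$ in order to produce a function $v$ that is caloric and globally non-negative, then apply $(\mathrm{PHI}^0)$ to $v$ and translate back.

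Let $u$ be bounded, caloric and non-negative on $Q=(t_0-4\delta W(\lambda B),t_0]\times B$ with $\lambda\le\tfrac{2}{3}C_0^{-1}$ (doubling the constant $C_0$ from $(\mathrm{PHI}^0)$ to $C_0':=\tfrac{3}{2}C_0$), and assume without loss of generality that $T_Q^{(2/3)}(u_-)<\infty$. Because $u_-=0$ on $B$, the function $f(t,\cdot):=u_-(t,\cdot)1_{B^c}$ coincides with $u_-$ and is supported in $B^c$. Apply the caloric extension $(\ref{ext-d})$ with $U=\tfrac{2}{3}B$, $\Omega=B$, and $t_1=t_0-4\delta W(\lambda B)$ to obtain $\tilde f:=f_{t_1,\frac{2}{3}B}$. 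Lemma \ref{ext-F} says that $\tilde f$ is non-negative, caloric on $(t_1,t_0]\times\tfrac{2}{3}B$, coincides with $u_-1_{B^c}$ off $\tfrac{2}{3}B$, and on $\tfrac{2}{3}B$ is controlled by
$$\tilde f(t,x)\le 2\overline{T}_{(t_1,t_0]\times B}^{(2/3)}(u_-)\le 8\delta W(\lambda B)T_Q^{(2/3)}(u_-).$$
Set $v:=u+\tilde f$. By linearity $v$ is caloric on $(t_1,t_0]\times\tfrac{2}{3}B$, and $v\ge 0$ everywhere: on $\tfrac{2}{3}B\subset B$ both summands are non-negative; on $B\setminus\tfrac{2}{3}B$ one has $u\ge 0$ and $\tilde f=0$; and on $B^c$ one has $v=u+u_-=u_+\ge 0$.

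Now apply $(\mathrm{PHI}^0)$ to $v$ with outer ball $\tilde B:=\tfrac{2}{3}B$ and parameter $\tilde\lambda:=\tfrac{3\lambda}{2}\le C_0^{-1}$; since $\tilde\lambda\tilde B=\lambda B$, the cylinder $(t_0-4\delta W(\tilde\lambda\tilde B),t_0]\times\tilde B$ coincides with $(t_1,t_0]\times\tfrac{2}{3}B$, the setting in which $v$ is caloric and globally non-negative. The resulting supremum-infimum inequality for $v$ over the standard sub-cylinders of $\lambda B$, together with $u\le v$ on the $\sup$-side and $v\le u+8\delta W(\lambda B)T_Q^{(2/3)}(u_-)$ on the $\inf$-side, delivers $(\mathrm{PHI})$ with constants $C_0'=\tfrac{3}{2}C_0$ and a new multiplicative constant of the form $(1+8\delta)C$. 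The single genuine subtlety is to justify Lemma \ref{ext-F}(iii), i.e.\ that $f(t,\cdot)\in\mathcal F'(\tfrac{2}{3}B)$ so that $\tilde f$ is truly caloric and not merely bounded and measurable: since $u_-=0$ on $B$, one chooses $w=0$ and strict neighborhood $B$ of $\tfrac{2}{3}B$ in the definition of $\mathcal F'(\tfrac{2}{3}B)$, and the finiteness of $\int_{\frac{2}{3}B}\int_M(u_-(x)-u_-(y))^2J(x,dy)d\mu(x)$ is the only point at risk when $u_-$ is unbounded. The plan for this is to truncate $u_-$ by $u_-\wedge N$, run the whole argument above for each $N$, and pass to the limit $N\to\infty$ using Lemma \ref{comp} to preserve the comparisons $v\ge 0$ and $\tilde f\le 8\delta W(\lambda B)T_Q^{(2/3)}(u_-)$. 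This approximation is the main technical hurdle; the rest is bookkeeping with $(\ref{ext-d})$.
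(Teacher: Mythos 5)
Your construction is essentially the paper's: the paper sets $\hat u := u + (u_-)_{t_0-4\delta W(\lambda B),\,C_0\lambda B}$ (caloric extension of $u_-$ into $C_0\lambda B$ rather than your $\frac{2}{3}B$, an immaterial cosmetic difference since $C_0\lambda\le\frac{2}{3}$), observes $\hat u\ge 0$ globally, bounds $\hat u\le u+2\overline{T}^{(C_0\lambda)}_{Q}(u_-)$ via (\ref{ext-infty}), and applies $(\mathrm{PHI}^0)$ to $\hat u$, yielding the same constant changes you report. The one point to correct is the closing ``technical hurdle'': the truncation argument is unnecessary. By the hypothesis of $(\mathrm{PHI})$ one has $u(t,\cdot)\in\mathcal{F}'$, hence $u_-(t,\cdot)\in\mathcal{F}'$ (normal contraction of an $\mathcal{F}$-function plus a constant is again of that form), and $\mathcal{F}'\subset\mathcal{F}'(\lambda B)$; more concretely, $(u_-(x)-u_-(y))^2\le(u(x)-u(y))^2$ and the latter integrates against $J$ to $\mathcal{E}^{(J)}(u)<\infty$, so $\int_{\frac{2}{3}B}\int_M(u_-(x)-u_-(y))^2J(x,dy)\,d\mu(x)<\infty$ automatically. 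Thus Lemma \ref{ext-F}(iii) applies directly, the extension is caloric and lies in $\mathcal{F}'$, and the passage to $(\mathrm{PHI}^0)$ requires no limiting argument.
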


\begin{proof}
Similar as in Proposition \ref{wph+t}, it suffices to prove ``$\Rightarrow$''.

For any $u$ that is non-negative bounded caloric on $Q$, if $\overline{T}_Q^{(3/4)}(u_-)=\infty$, then $(\mathrm{PHI})$ trivially holds for $u$. Thus we only consider the case $\overline{T}_Q^{(3/4)}(u_-)<\infty$.

Denote $w=\delta W(B)$ and $w'=\delta W(\frac{3}{4}B)$. For any $\varepsilon\in(0,w-w')$, define
\begin{equation}\label{uep}
u_\varepsilon(t,x):=\frac{1}{\varepsilon}\int_0^\varepsilon u(t-s',x)ds'\quad\mbox{and}\quad \hat{u}_\varepsilon:=u_\varepsilon+((u_\varepsilon)_-)_{t_0-4w+\varepsilon,\frac{3}{4}B}.
\end{equation}
Clearly $u_\varepsilon$ is caloric and bounded on $Q_\varepsilon=(t_0-4w+\varepsilon,t_0]\times B$. Further,
$$T_{(t_0-4w+\varepsilon,t_0]\times B}^{(3/4)}((u_\varepsilon)_-)\le\sup\limits_{t_0-4w+\varepsilon<t\le t_0}\frac{1}{\varepsilon}\int_{t-\varepsilon}^t T_B^{(3/4)}(u_-;s)ds\le\frac{1}{\varepsilon}\overline{T}_Q^{(3/4)}(u_-)<\infty.$$
Thus by Lemma \ref{ext-F}, $\hat{u}_\varepsilon$ is caloric and bounded on $(t_0-4w+\varepsilon,t_0]\times\frac{3}{4}B$. It is easy to check that $\hat{u}_\varepsilon\ge 0$ globally with the help of Proposition \ref{gpmp}.

For any $t_0-3w<t\le t_0-2w$ and any non-negative integer $n$, set
$$\tau_n=t+2w'+nt'\quad\mbox{with some}\quad t'\in\big(w',w\wedge 2w'\wedge(t+4w-t_0)\big).$$
Let $n_\ast$ be the smallest integer such that $\tau_{n_\ast}>t_0-w$. Then $n_\ast<2C_W\left(\frac{3}{2}\right)^{\beta_2}=:C'_W$ by (\ref{W}). Further,
$$t-w'>t-w+\varepsilon>t_0-4w+\varepsilon\quad\mbox{and}\quad\tau_0=t+2w'<t+2w\le t_0.$$
It follows that $t_0-t'<\tau_{n_\ast}<t_0$. Applying $(\mathrm{PHI}^0)$ to $\hat{u}_\varepsilon$ in $(t-t',t-t'+4w']\times\frac{3}{4}B$ and $(\tau_n-2w',\tau_n+2w']\times\frac{3}{4}B$ successively with $0\le n<n_\ast$, we obtain
$$\mathop{\mathrm{esup}}_{\frac{3}{4}\lambda_0B}\hat{u}_\varepsilon(t,\cdot)\le C\mathop{\mathrm{einf}}_{\frac{3}{4}\lambda_0B}\hat{u}_\varepsilon(\tau_0,\cdot)\le C^2\mathop{\mathrm{einf}}_{\frac{3}{4}\lambda_0B}\hat{u}_\varepsilon(\tau_1,\cdot)\le\cdots\le C^{1+n_\ast}\mathop{\mathrm{einf}}_{\frac{3}{4}\lambda_0B}\hat{u}_\varepsilon(\tau_{n_\ast},\cdot).$$
Therefore, with $Q_-:=(t_0-3w,t_0-2w]\times\frac{3}{4}\lambda_0B$ and $Q_+:=(t_0-w,t_0]\times\frac{3}{4}\lambda_0B$, there is
\begin{align*}
\mathop{\mathrm{esup}}_{Q_-}u_\varepsilon\le&\ \mathop{\mathrm{esup}}_{(t_0-3w,t_0-2w]\times\frac{3}{4}\lambda_0B}\hat{u}_\varepsilon=\sup\limits_{t_0-3w<t\le t_0-2w}\mathop{\mathrm{esup}}_{\frac{3}{4}\lambda_0B}\hat{u}_\varepsilon(t,\cdot)\le C^{1+n_\ast}\mathop{\mathrm{einf}}_{\frac{3}{4}\lambda_0B}\hat{u}_\varepsilon(\tau_{n_\ast},\cdot)\le C^{C'_W}\mathop{\mathrm{einf}}\limits_{Q_+}\hat{u}_\varepsilon\\
=&\ C^{C'_W}\mathop{\mathrm{einf}}\limits_{Q_+}\left(u_\varepsilon+2\int_{t_0-4w+\varepsilon}^t\int_{\frac{3}{4}B}\int_{B^c}(u_\varepsilon)_-(s,y)J(z,dy)p_{t-s}^{\frac{3}{4}B}(x,dz)ds\right)\\
\le&\ C^{C'_W}\mathop{\mathrm{einf}}\limits_{Q_+}u_\varepsilon+2C^{C'_W}\mathop{\mathrm{esup}}\limits_{(t,x)\in Q_+}\int_{t_0-4w+\varepsilon}^t\left\{\frac{1}{\varepsilon}\int_{s-\varepsilon}^sT_B^{(3/4)}(u_-;s')ds'\int_{\frac{3}{4}B}p_{t-s}^{\frac{3}{4}B}(x,dz)\right\}ds\\
\le&\ C^{C'_W}\mathop{\mathrm{einf}}\limits_{Q_+}u_\varepsilon+8C^{C'_W}\overline{T}_Q^{(3/4)}(u_-).
\end{align*}
Letting $\varepsilon\downarrow 0$, by the weak $L^2(B)$-continuity of $u$, it follows that
$$\mathop{\mathrm{esup}}_{Q_-}u\le C^{C'_W}\mathop{\mathrm{einf}}\limits_{Q_+}u+8C^{C'_W}\overline{T}_Q^{(3/4)}(u_-),$$
that is, $(\mathrm{PHI})$ holds with $\delta$ unchanged, $\lambda_0$ replaced by $\frac{3}{4}\lambda_0$, and $C$ replaced by $8C^{C'_W}$.
\end{proof}

A similar strategy works for the elliptic case with a further assumption $(\mathrm{E}_\le)$:

\begin{proposition}\label{ehi+t}
If $(\mathrm{E}_\le)$ holds, then $(\mathrm{wEH}_q^0)\Leftrightarrow(\mathrm{wEH}_q)$ for all $q\in(0,1]$, and $(\mathrm{EHI}^0)\Leftrightarrow(\mathrm{EHI})$.
\end{proposition}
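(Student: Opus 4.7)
My plan is to mirror the proof of Proposition \ref{phi+t}, replacing the caloric extension (\ref{ext-d}) by its elliptic analog. The first step will be to define, for $f\ge 0$ with $\mathrm{supp}\,f\subset B^c$ and an open set $U\Subset B$, the \emph{harmonic extension}
$$f_U(x)=\begin{cases}f(x), & x\in U^c,\\[2pt] 2\int_U g^U(x,z)\,T(f)(z)\,d\mu(z), & x\in U,\end{cases}$$
where $g^U$ is the Green kernel of $U$ and $T(f)(z):=\int_{B^c}f(y)\,J(z,dy)$. I would then establish the elliptic counterpart of Lemma \ref{ext-F}: (a) $f_U$ is harmonic on $U$ in the sense of $\tilde{\mathcal{E}}$; (b) $1_U f_U\in\mathcal{F}(U)$ whenever $T_B^{(\lambda)}(f)<\infty$ with $U=\lambda B$; and, crucially, (c) under the same hypothesis,
$$\|f_U\|_{L^\infty(U)}\le 2\,T_B^{(\lambda)}(f)\cdot\|G^U 1_U\|_{L^\infty(U)}\le 2C_{\mathrm{E}}\,W(U)\,T_B^{(\lambda)}(f).$$
Claim (c) is the only place where hypothesis $(\mathrm{E}_\le)$ is directly invoked; it is the elliptic replacement for the trivial bound $\int_{\lambda B}p_{t-s}^{\lambda B}(x,dz)\le 1$ used in Lemma \ref{ext-F}(i). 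Claims (a) and (b) would follow by repeating the time-integrated computations in the proof of Lemma \ref{ext-F} with $G^U$ substituted for $\int_{t_1}^t P_{t-s}^U\,ds$.

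With this machinery, both implications would follow the template of Proposition \ref{phi+t}. Given $u$ non-negative, bounded and superharmonic on $B$, let $C_0$ denote the constant from $(\mathrm{wEH}_q^0)$ (resp.\ $(\mathrm{EHI}^0)$). I would take $\lambda$ small enough that $\lambda\le\tfrac{2}{3}C_0^{-1}$ and that the radius of $U:=C_0\lambda B$ satisfies $C_0\lambda R<\sigma\bar R$ (so $(\mathrm{E}_\le)$ applies on $U$), and set $\hat u:=u+(u_-)_U$. Then $\hat u$ is superharmonic on $U$ (sum of a function superharmonic on $B\supset U$ and one harmonic on $U$ by (a)), bounded, and globally non-negative: on $U^c$, $\hat u=u+u_-=u_+\ge 0$, while on $U\subset B$ both $u$ and $(u_-)_U$ are non-negative. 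By (c), (\ref{W}), and monotonicity of $\lambda'\mapsto T_B^{(\lambda')}$, on $\lambda B\subset U$ one obtains
$$\hat u\le u+C\,W(\lambda B)\,T_B^{(2/3)}(u_-).$$
Applying $(\mathrm{wEH}_q^0)$ (resp.\ $(\mathrm{EHI}^0)$) to $\hat u$ on the ball $U$ with parameter $C_0^{-1}$, noting that $C_0^{-1}U=\lambda B$ and that $\hat u\ge u\ge 0$ there, would then yield $(\mathrm{wEH}_q)$ (resp.\ $(\mathrm{EHI})$) with a suitably enlarged $C_0$.

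The main obstacle will be the verification of claim (a). Decomposing $f_U=1_{U^c}f+1_U\cdot 2G^U T(f)$ and testing against $\varphi\in\mathcal{F}(U)$, I would need to track the precise cancellation between $\mathcal{E}(1_U\cdot 2G^U T(f),\varphi)$ and the non-local cross-term $-2(T(f),\varphi)_{L^2(U)}$ that arises from $\tilde{\mathcal{E}}(1_{U^c}f,\varphi)$ via the symmetry of $dj$; this is the elliptic shadow of the calculation in Lemma \ref{ext-F}(iii) and is exactly what dictates the factor of $2$ in the definition of $f_U$. Modulo this verification, the remainder of the argument is routine bookkeeping (membership in $\mathcal{F}'(U)$, $L^\infty$-bounds off $U$, etc.).
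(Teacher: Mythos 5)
Your proposal is correct and takes essentially the same approach as the paper: both define the corrected function by adding to $u$ (equivalently replacing $u$ with $u_+$ on $B^c$) the term $2G^{C_0\lambda B}\bigl[\int_{B^c}u_-(y)J(\cdot,dy)\bigr]$, verify this sum is globally non-negative and (super)harmonic on $C_0\lambda B$ by the Green-operator identity $\mathcal{E}(G^\Omega g,\varphi)=(g,\varphi)$ cancelling the non-local cross term, and then invoke $(\mathrm{E}_\le)$ to bound its supremum by $CW(\lambda B)T_B(u_-)$ before applying the tail-free Harnack inequality. The only cosmetic difference is that you phrase the correction via a Green kernel $g^U(x,z)$, whereas the paper works directly with the Green operator (avoiding any implicit assumption of absolute continuity of the Green measure).
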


\begin{proof}
It suffices to prove ``$\Rightarrow$''. Fix an arbitrary $0<\lambda\le\frac{2}{3}\lambda_0$.

For any $f\in\mathcal{F}(B)$ that is non-negative and harmonic on $B$, it is easy to check that
$$\check{f}=f_++2G^{\lambda_0^{-1}\lambda B}\left[\int_{B^c}f_-(y)J(\cdot,dy)\right]$$
is non-negative on $M$ and harmonic on $\lambda_0^{-1}\lambda B\subset\frac{2}{3}B$.

Applying $(\mathrm{wEH}_q^0)$ to $\check{f}$ (with $B$ replaced by $\lambda_0^{-1}\lambda B$), and then using $(\mathrm{E}_\le)$, we obtain
\begin{align*}
\left(\fint_{\lambda B}f^qd\mu\right)^{1/q}\le&\ \left(\fint_{\lambda B}\left(\check{f}\right)^qd\mu\right)^{1/q}\le C\mathop{\mathrm{einf}}\limits_{\lambda B}\check{f}\le C\left\{\mathop{\mathrm{einf}}\limits_{\lambda B}f_++\mathop{\mathrm{esup}}\limits_{\lambda_0^{-1}\lambda B}2G^{\lambda_0^{-1}\lambda B}\left[\int_{B^c}f_-(y)J(\cdot,dy)\right]\right\}\\
\le&\ C\left\{\mathop{\mathrm{einf}}\limits_{\lambda B}f+2\mathop{\mathrm{esup}}\limits_{z\in\lambda_0^{-1}\lambda B}\int_{B^c}f_-(y)J(z,dy)\cdot\mathop{\mathrm{esup}}\limits_{\frac{1}{2}B}G^{\lambda_0^{-1}\lambda B}1_{\lambda_0^{-1}\lambda B}\right\}\le C\mathop{\mathrm{einf}}\limits_{\lambda B}f+C'W(\lambda B)T_B^{(2/3)}(f_-).
\end{align*}
Thus $(\mathrm{wEH}_q)$ holds with $\lambda_0$ replaced by $\frac{2}{3}\lambda_0$. In the same way, we see $(\mathrm{EHI}^0)+(\mathrm{E}_\le)\Rightarrow(\mathrm{EHI})$.
\end{proof}

\section{Continuity of caloric functions with a weaker tail condition}\label{ctn}

\begin{lemma}\label{surv}
$(\mathrm{LLE})+(\mathrm{TJ})\Rightarrow(\mathrm{S}^+)$.
\end{lemma}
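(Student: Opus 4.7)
The plan is to construct a supercaloric majorant of $w:=1-P_t^B 1_B$ on a small concentric sub-ball of $B$ using the caloric extension of Lemma \ref{ext-F}, and then invoke the parabolic maximum principle (Proposition \ref{gpmp}). First I exploit that $(\mathrm{LLE})\Rightarrow(\mathrm{S}^-)$ via Lemma \ref{vLE}: the baseline $w\le\varepsilon$ on $\tfrac14B$ for $t\le\delta'W(B)$ is in hand, and applying $(\mathrm{S}^-)$ to balls $B(x,R/4)\subset B$ centered at points $x\in\tfrac12 B\setminus\tfrac14 B$ extends this bound to $w\le\varepsilon$ on the annulus $\tfrac12 B\setminus\tfrac14 B$ for $t$ up to a fixed fraction of $W(B)$. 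For $t$ exceeding this fraction the conclusion $w\le Ct/W(B)$ is automatic from $w\le 1$, so one need only produce the linear bound in the small-time regime.

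For the majorant, I take $\tilde f:=1_{(\frac12 B)^c}$ and apply Lemma \ref{ext-F} with outer ball $\tfrac12 B$ and $\lambda=\tfrac12$ to produce the caloric extension $\tilde f_{0,\frac14 B}$ on $(0,t]\times\tfrac14 B$. By part (i) of that lemma together with $(\mathrm{TJ})$,
\[
\tilde f_{0,\frac14 B}(t,x)\le 2\overline T^{(1/2)}_{(0,t]\times\frac12 B}(\tilde f)\le 2t\sup_{z\in\frac14 B}J(z,(\tfrac12 B)^c)\le\frac{C_1 t}{W(B)}.
\]
Since $\tilde f_{0,\frac14 B}$ vanishes on the annulus $\tfrac12 B\setminus\tfrac14 B$ (where $w$ is only known to be $\le\varepsilon$), I augment it with a smooth spatial cutoff $\varepsilon\phi$, where $\phi\in\mathcal F$, $\phi=1$ on $\tfrac12 B\setminus\tfrac14 B$, $\phi=0$ on $\tfrac14 B\cup(\tfrac12 B)^c$, and $0\le\phi\le 1$, together with a linear time correction $Kt/W(B)$:
\[
\tilde h:=\tilde f_{0,\frac14 B}+\varepsilon\phi+\frac{Kt}{W(B)}.
\]
Domination $\tilde h\ge w$ on the parabolic boundary of $(0,t]\times\tfrac14 B$ is immediate from the annulus baseline and the trivial bound $w\le 1$ elsewhere. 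Testing against $0\le\psi\in\mathcal F(\tfrac14 B)$ gives
\[
(\partial_t\tilde h+\tilde{\mathcal E}(\tilde h),\psi)=\varepsilon\tilde{\mathcal E}(\phi,\psi)+\frac{K}{W(B)}(1,\psi),
\]
and I choose $K$ of order $\varepsilon$ so that the positive term dominates the (possibly negative) $\tilde{\mathcal E}(\phi,\psi)$, rendering $\tilde h$ supercaloric on $\tfrac14 B$. Proposition \ref{gpmp} then yields $w\le\tilde h$ on $\tfrac14 B$, and since $\phi=0$ there, $w\le(C_1+K)t/W(B)$, which is $(\mathrm{S}^+)$.

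The main obstacle will be bounding $\tilde{\mathcal E}(\phi,\psi)$ from below uniformly in $\psi$: a direct expansion (using that $\mathrm{supp}\,\phi$ and $\mathrm{supp}\,\psi$ are disjoint) gives $\tilde{\mathcal E}(\phi,\psi)\ge-2\int_{\frac14 B}\psi(y)J(y,(\tfrac14 B)^c)\,d\mu(y)$, and $(\mathrm{TJ})$ a priori controls $J(y,(\tfrac14 B)^c)\le C/W(B)$ only for $y$ lying in a \emph{strict} sub-ball of $\tfrac14 B$, not uniformly on $\tfrac14 B$. My plan is to resolve this by carrying out the whole argument on $\tfrac18 B$ in place of $\tfrac14 B$---on which $(\mathrm{TJ})$ applied to $\tfrac14 B$ with $\lambda=\tfrac12$ does provide the required uniform bound---and then recover the statement on $\tfrac14 B$ by, for each $x\in\tfrac14 B$, applying the $\tfrac18 B$-version to the concentric sub-ball $B_x:=B(x,R/2)\subset B$ (so that $x$ lies at its center, hence in $\tfrac18 B_x$) and using monotonicity $1-P_t^{B_x}1_{B_x}\ge 1-P_t^B 1_B$ at $x$, which costs only a constant depending on the $W$-doubling exponent.
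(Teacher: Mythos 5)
The paper's proof is a short citation chain: Lemma \ref{vLE} and (\ref{s-c}) give $(\mathrm{C})$, and then \cite[Lemmas 7.9, 9.7 and 10.3]{ghh+4} give $(\mathrm{TJ})+(\mathrm{C})\Rightarrow(\mathrm{TP})\Rightarrow(\mathrm{S}^+)$, routing through the \emph{global} tail estimate $P_t1_{B^c}\le Ct/W(B)$. Your proposal is a genuinely different, self-contained barrier argument via Lemma \ref{ext-F} and Proposition \ref{gpmp}. Your preliminaries — deriving $(\mathrm{S}^-)$, the annulus extension of the $\varepsilon$-bound, and the $O(t/W(B))$ bound on the caloric extension — are all sound. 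However, the barrier construction has a gap that is \emph{not} the one you flagged, and your proposed fix of shrinking to $\tfrac18 B$ does not repair it.

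The barrier $\tilde h$ must dominate $w=1-P_s^B1_B$ on the entire spatial complement of the cylinder where Proposition \ref{gpmp} is applied; simultaneously, $\phi$ must vanish on a neighbourhood of $\mathrm{supp}\,\psi$ with positive distance, so that the jump expansion together with $(\mathrm{TJ})$ deliver $\tilde{\mathcal E}(\phi,\psi)\ge -CW(B)^{-1}(1,\psi)$. These requirements are in conflict. The $\phi$ you describe — equal to $1$ on $\tfrac12 B\setminus\tfrac14 B$ and $0$ on $\tfrac14 B$, with no transition region — is discontinuous across $\partial(\tfrac14 B)$ and generically not in $\mathcal F$: the jump energy across that interface involves $\int_{\frac14 B}J(x,(\frac14 B)^c)\,d\mu(x)$, which is not controlled precisely because $(\mathrm{TJ})$ controls the tail only on strict sub-balls. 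If instead you insert a transition region (say $\phi=0$ on $\tfrac{3}{16}B$ and $\phi=1$ outside $\tfrac14 B$) and apply Proposition \ref{gpmp} on $(0,t]\times\tfrac18 B$, the energy estimate works (now $J(x,(\tfrac{3}{16}B)^c)$ is $(\mathrm{TJ})$-controlled for $x\in\tfrac18 B$), but on the annulus $\tfrac{3}{16}B\setminus\tfrac18 B$ one has $\tilde f_{0,\frac18 B}(s,\cdot)=\tilde f=0$ and $\phi=0$, so $\tilde h(s,\cdot)=Ks/W(B)$, while the only available bound on $w$ there is $w\le\varepsilon$ with $\varepsilon$ a \emph{fixed} constant from $(\mathrm{S}^-)$. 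For $s\ll \varepsilon W(B)/K$ the domination $\tilde h\ge w$ fails, $(w-\tilde h)_+\notin\mathcal F(\tfrac18 B)$, and Proposition \ref{gpmp} does not apply. Conversely, if you keep the maximum principle on $(0,t]\times\tfrac14 B$ to secure the boundary domination, you are back to needing a uniform bound on $J(x,(\tfrac14 B)^c)$ for $x\in\tfrac14 B$ — exactly the issue you noticed. Shrinking the test ball resolves one horn of this dilemma and opens the other: there is always an annulus between the ball on which the maximum principle acts (where both $\tilde f_{0,\lambda B}$ and $\phi$ vanish) and the ball outside which $\tilde f=1$, and on it the barrier drops to $O(s/W(B))<\varepsilon$. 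A secondary, easily fixed issue: the recentering at the end needs a (countable) cover of $\tfrac14 B$ by balls $\tfrac18 B_{x_i}$, since each application gives an a.e.\ bound on $\tfrac18 B_{x_i}$ rather than a pointwise one at its centre.
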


\begin{proof}
By (\ref{vLE}) and (\ref{s-c}) we see $(\mathrm{C})$ holds under $(\mathrm{LLE})$. Since $(\mathrm{TJ})$ holds, by \cite[Lemmas 7.9 and 9.7]{ghh+4} we obtain $(\mathrm{TP})$, that is, for every ball $B$ and all $t>0$,
$$P_t1_{B^c}\le\frac{Ct}{W(B)}.$$
The proof of $(\mathrm{TP})+(\mathrm{C})\Rightarrow(\mathrm{S}^+)$ completely repeats that of \cite[Lemma 10.3]{ghh+4}.
\end{proof}

Recall by \cite[Subsection 4.1]{ghh+3} that, with a sufficiently large $\beta>0$, there exist a constant $L_0\ge 1$ and a metric $d_\ast$ on $M$ (inducing the same topology as $d$ does) such that for all $x,y\in M$,
$$L_0^{-1}d_\ast(x,y)^\beta\le W(x,d(x,y))\le L_0d_\ast(x,y)^\beta.$$
Consequently, if $x,y\in\lambda B$ and $z\in(\lambda'B)^c$ with some $0<\lambda<\lambda'\le 1$, then with $C_L=4^{\beta_2}C_W^2L_0$,
\begin{align}
d_\ast(x,y)^\beta\le&L_0W(x,d(x,y))\le L_0C_WW(2B)\left(\frac{d(x,y)}{2R}\right)^{\beta_1}\le 4^{\beta_2-\beta_1}C_W^2L_0W\left(\frac{1}{2}B\right)\left(\frac{d(x,y)}{\frac{1}{2}R}\right)^{\beta_1};\label{*+}\\
d_\ast(x,z)^\beta\ge&L_0^{-1}W(x,d(x,z))\ge L_0^{-1}W(x,(\lambda'-\lambda)R)\ge L_0^{-1}C_W^{-1}(\lambda'-\lambda)^{\beta_2}W(B)\ge\frac{2^{\beta_1}(\lambda'-\lambda)^{\beta_2}}{C_W^2L_0}W\left(\frac{1}{2}B\right).\label{*-}
\end{align}

\begin{proposition}\label{sp-c}
Assume that conditions $(\mathrm{TJ})$, $(\mathrm{Gcap})$ and $(\mathrm{PI})$ hold. Then, for any ball $B=B(x_0,R)$ with $0<R<\overline{R}$, any $\delta_0W(\frac{1}{2}B)<T<W(x_0,\overline{R})$, $\varepsilon>0$ and $u:(0,T]\to\mathcal{F}'(B)$ that is caloric on $Q=(0,T]\times B$, there exist $\delta,\tau>0$ such that
\begin{equation}\label{uc}
|u(t,y)-u(t',z)|<\varepsilon\left(\|u\|_Q+\overline{T}_Q^{(2/3)}(u)\right)
\end{equation}
for all $t,t'\in\left[\delta_0W\left(\frac{1}{2}B\right),T\right]$ with $|t-t'|<\tau W\left(\frac{1}{2}B\right)$ and $\mu$-a.e.\ $y,z\in\frac{1}{2}B$ with $d(y,z)<\delta R$.
\end{proposition}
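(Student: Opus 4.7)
The plan is to adapt the proof of Proposition~\ref{v_D} (the implication $(\mathrm{PGL})+(\mathrm{TJ})\Rightarrow(\mathrm{PHR}^+)$), substituting the integrated tail $\overline{T}^{(2/3)}_{Q_0}(|u|)$ for the $L^\infty$-in-time tail used there. First, the hypotheses $(\mathrm{TJ})$, $(\mathrm{PI})$, $(\mathrm{ABB})$ constitute condition $\mathrm{(i)'}$ of Theorem~\ref{M2} together with $(\mathrm{TJ})$, so all the equivalent conditions of Theorems~\ref{M1} and~\ref{M2} hold; in particular $(\mathrm{PGL})$, $(\mathrm{LLE})$ and $(\mathrm{E})$ are at our disposal, and Lemma~\ref{surv} supplies $(\mathrm{S}^+)$. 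By linearity we may normalise $\Phi:=\|u\|_{Q_0}+\overline{T}^{(2/3)}_{Q_0}(|u|)=1$, so the target inequality reduces to $|u(t,y)-u(t',z)|<\varepsilon$.

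Fix a target point $(t_0,y_0)\in[\delta_0 W(\tfrac12 B),T]\times\tfrac12 B$ (it suffices to take $t_0\ge\max\{t,t'\}$). Following the setup of Proposition~\ref{v_D}, introduce nested balls $B_k:=B(y_0,\Lambda^{-k}r_0)$ and cylinders $Q_k:=(t_0-\delta_\ast W(C_0 B_k),t_0]\times B_k$, where $r_0$ is chosen small enough that $Q_0\subset(0,T]\times B$, $\Lambda>1$ will be taken large, and $\delta_\ast,C_0$ come from $(\mathrm{PGL})$. Denote by $a_k,b_k$ the essential infimum and supremum of $u$ on $Q_k$ and let $\xi_k:=b_k-a_k$; the sequence $\xi_k$ is nonincreasing and $\xi_0\le\|u\|_{Q_0}\le1$. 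As in the original proof we may assume $\hat\mu(Q_{k+1}^-\cap\{u\le(a_k+b_k)/2\})\ge\hat\mu(Q_{k+1}^-)/2$, where $Q_{k+1}^-$ is the standard inner sub-cylinder, and applying $(\mathrm{PGL})$ to $b_k-u$ on $Q_{k+1}^-$ yields on $Q_{k+1}$
$$u\le b_k-\frac{b_k-a_k}{2^{1+1/q}C_q}+2\overline{T}^{(2/3)}_{(t_0-3\delta_\ast W(C_0 B_{k+1}),\,t_0]\times C_0 B_{k+1}}((b_k-u)_-).$$
The crucial departure from the original argument is the estimation of this integrated tail. Rather than bounding $\overline{T}$ by (time length)$\cdot\sup_s T^{(2/3)}$, which would require a finite $L^\infty$-in-time tail, we split $(b_k-u)_-$ by spatial region: on $B^c$, $(b_k-u)_-\le|u|$; on $B_i\setminus B_{i+1}$ for $i<k$, the containment $Q_{k+1}^-\subset Q_i$ (valid for $\Lambda$ large) together with $a_i\le u\le b_i$ on $Q_i$ give $(b_k-u)_-\le\xi_i$; on $B_k\setminus C_0 B_{k+1}$ the contribution vanishes because $u\le b_k$ on $Q_k$. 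Applying $(\mathrm{TJ})$ and the scaling $W(B_{k+1})/W(B_{i+1})\le C_W\Lambda^{-(k-i)\beta_1}$ produces the linear recurrence
$$\xi_{k+1}\le\eta_0\,\xi_k+2\tau_k+C'\sum_{i=0}^{k-1}\xi_i\,\Lambda^{-(k-i)\beta_1},\qquad\eta_0:=1-2^{-(1+1/q)}/C_q\in(0,1),$$
with $\tau_k:=\overline{T}^{(2/3)}_{(t_0-3\delta_\ast W(C_0 B_{k+1}),\,t_0]\times B}(|u|)$.

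Since $\xi_i\le\xi_0\le1$, the coupling satisfies $\sum_{i<k}\xi_i\Lambda^{-(k-i)\beta_1}\le 1/(\Lambda^{\beta_1}-1)$; choose $\Lambda$ large enough that $C'/(\Lambda^{\beta_1}-1)\le(1-\eta_0)\varepsilon/4$, so substituting $\alpha_k:=\xi_k-\varepsilon/4$ reduces the recurrence to $\alpha_{k+1}\le\eta_0\alpha_k+2\tau_k$, whence by iteration
$$\xi_k\le\eta_0^k\,\xi_0+2\sum_{j=0}^{k-1}\eta_0^{k-1-j}\tau_j+\frac{\varepsilon}{4}.$$
The first term decays geometrically. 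The second term tends to $0$ because $\tau_j\to0$ as $j\to\infty$: indeed, absolute continuity of the Lebesgue integral (using $\overline{T}^{(2/3)}_{Q_0}(|u|)<\infty$) ensures the integral $\overline{T}^{(2/3)}_{(\cdot)\times B}(|u|)$ over a time window tends to $0$ as the window length tends to $0$, and the windows $(t_0-3\delta_\ast W(C_0 B_{k+1}),t_0]$ shrink to $\{t_0\}$. Hence $\limsup_k\xi_k\le\varepsilon/4$, and for $k\ge K(u,\varepsilon)$ sufficiently large, $\xi_k<\varepsilon$. Taking $\delta:=\Lambda^{-K}r_0/R$ and $\tau:=\delta_\ast W(C_0\Lambda^{-K}r_0)/W(\tfrac12 B)$ makes $Q_K$ contain the required neighbourhood of $(t_0,y_0)$, so $|u(t,y)-u(t',z)|\le\xi_K<\varepsilon$. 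Uniformity in $(t_0,y_0)$ over the compact region follows because the absolute-continuity modulus of $s\mapsto T^{(2/3)}_B(|u|;s)$ on $(0,T]$ is a property of $u$ alone, independent of $t_0$. The principal technical obstacle is this recurrence analysis: unlike $(\mathrm{PHR}^+)$ we do not obtain a geometric or H\"older rate, only uniform-continuity-type control, because the decay rate of $\tau_k$ is non-universal and depends on the full function $u$.
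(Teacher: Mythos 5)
Your proposal is correct but takes a genuinely different route from the paper. The paper's own proof works directly from the comparison Lemma~\ref{comp}: applying it to both $u$ and $\|u\|_{Q_0}-u$ produces the decomposition $u(t,y)-u(t',z)\le I_1+I_2+I_3$, where $I_1$ is the survival defect $\|u\|_{Q_0}(1-P_{t-s}^{\frac{2}{3}B}1_{\frac{2}{3}B})(y)$ (controlled by $(\mathrm{S}^+)$ via Lemma~\ref{surv}), $I_2$ is the difference $P_{t-s}^{\frac{2}{3}B}u(s,\cdot)(y)-P_{t'-s}^{\frac{2}{3}B}u(s,\cdot)(z)$ (controlled by the H\"older and time-derivative estimates for the Dirichlet heat kernel from \cite[Lemmas 6.9 and 6.11]{ghh+3}, which are available under $(\mathrm{PI})+(\mathrm{ABB})+(\mathrm{TJ})$), and $I_3$ is the integrated tail $2\overline{T}^{(2/3)}_{(s,t\vee t']\times B}(|u|)$ (controlled by absolute continuity). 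Choosing $s$ close to $t\wedge t'$ makes $I_3$ small, and then $|t-t'|$ and $d(y,z)$ are taken small relative to $t\wedge t'-s$ to kill $I_1,I_2$. Your proposal instead runs the growth-lemma oscillation-decay machinery from Proposition~\ref{v_D}, with the single essential modification that the tail in each step is taken over a geometrically shrinking time window rather than estimated by (window length) times $T^{(2/3)}_{Q_0}$. The resulting recurrence $\xi_{k+1}\le\eta_0\xi_k+2\tau_k+C'\sum_{i<k}\xi_i\Lambda^{-(k-i)\beta_1}$ with $\tau_k\to 0$ by absolute continuity, together with the shift $\alpha_k=\xi_k-\varepsilon/4$ and the $\Lambda$-absorption of the coupling sum, indeed gives $\limsup_k\xi_k\le\varepsilon/4$, and your observation that the absolute-continuity modulus of $s\mapsto T^{(2/3)}_B(|u|;s)$ is independent of $t_0$ correctly gives uniformity over the compact time-space region. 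The two routes buy different things: the paper's argument is shorter given the heat-kernel H\"older estimates of \cite{ghh+3} as a black box, and yields in Proposition~\ref{p-phr} the explicit H\"older modulus when $T^{(2/3)}_{Q_0}(|u|)<\infty$ by the same decomposition; your argument is self-contained modulo $(\mathrm{PGL})$ and shows more transparently why only a qualitative modulus is obtainable when the tail is merely integrable in time (the decay rate of $\tau_k$ is non-universal). One small omission: you should note, as the paper does, that when $\Phi=\infty$ the inequality is trivial, and when $\Phi=0$ the function is constant, so the normalisation $\Phi=1$ is indeed legitimate.
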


\begin{proof}
For simplicity we denote $w':=W(\frac{1}{2}B)$. If $\|u\|_Q=\infty$ or $\overline{T}_Q^{(2/3)}(u)=\infty$, then (\ref{uc}) holds trivially; while if $\|u\|_Q=0$, then $|u(t,y)-u(t',z)|\equiv 0$, so that (\ref{uc}) is also trivial. Thus it suffices to consider the case $0<\|u\|_Q+\overline{T}_Q^{(2/3)}(u)<\infty$.

For all $\delta_0w'\le t,t'\le T$ and $\mu$-a.e.\ $y,z\in\frac{1}{2}B$, say $u(t,y)\ge u(t',z)$, by applying (\ref{u-l}) to both $(\|u\|_Q\pm u)$ with $\lambda=\frac{2}{3}$ and using the basic fact $(\|u\|_Q-u)_-\le u_+$ and $(\|u\|_Q+u)_-\le u_-$, we obtain for all $0<s<t\wedge t'$ that
\begin{align}
u(t,y)-&\ u(t',z)=2\|u\|_Q-\left(\|u\|_Q-u(t,y)\right)-\left(\|u\|_Q+u(t',z)\right)\notag\\
\le&\ 2\|u\|_Q-\left(P_{t-s}^{\frac{2}{3}B}\left(\|u\|_Q-u(s,\cdot)\right)(y)-2\overline{T}_{(s,t]\times B}^{(2/3)}\left((\|u\|_Q-u)_-\right)\right)-\left(P_{t'-s}^{\frac{2}{3}B}\left(\|u\|_Q+u(s,\cdot)\right)(z)-2\overline{T}_{(s,t']\times B}^{(2/3)}\left((\|u\|_Q+u)_-\right)\right)\notag\\
\le&\ 2\|u\|_Q-\|u\|_QP_{t-s}^{\frac{2}{3}B}1_B(y)+P_{t-s}^{\frac{2}{3}B}u(s,\cdot)(y)+2\overline{T}_{(s,t]\times B}^{(2/3)}(u_+)-\|u\|_QP_{t-s}^{\frac{2}{3}B}1_B(z)-P_{t'-s}^{\frac{2}{3}B}u(s,\cdot)(z)+2\overline{T}_{(s,t']\times B}^{(2/3)}(u_-)\notag\\
\le&\ 2\|u\|_Q\mathop{\mathrm{esup}}\limits_{\frac{1}{2}B}\left(1-P_{t-s}^{\frac{2}{3}B}1_{\frac{2}{3}B}\right)+\left(P_{t-s}^{\frac{2}{3}B}u(s,\cdot)(y)-P_{t'-s}^{\frac{2}{3}B}u(s,\cdot)(z)\right)+2\overline{T}_{(s,t\vee t']\times B}^{(2/3)}(u)=:I_1+I_2+I_3.\label{Is}
\end{align}

We first estimate $I_1$. By Lemma \ref{surv}, $(\mathrm{S}^+)$ holds here. Thus (with covering technique)
\begin{equation}\label{I1}
I_1=2\|u\|_Q\mathop{\mathrm{esup}}\limits_{\frac{1}{2}B}\left(1-P_{t-s}^{\frac{2}{3}B}1_{\frac{2}{3}B}\right)\le 2\|u\|_Q\frac{C'_S(t-s)}{W(\frac{2}{3}B)}\le C_1\|u\|_Q\frac{t-s}{w'}.
\end{equation}

Now we estimate $I_2$. By \cite[Lemmas 6.9 and 6.11]{ghh+3}, there exist $C,\nu,\theta>0$ (independent of $B,t,t',s,y,z$) such that for all $f\in L^1\left(\frac{2}{3}B\right)$,
\begin{align}
\left|P_{t-s}^{\frac{2}{3}B}f(y)-P_{t-s}^{\frac{2}{3}B}f(z)\right|\le&\frac{C\left(\mathrm{diam}_{d_\ast}\left(\frac{2}{3}B\right)\right)^{\frac{\beta}{\nu}}}{\mu\left(\frac{2}{3}B\right)(t-s)^{\frac{1}{\nu}}}\exp\left\{\frac{t-s}{W\left(x_0,\bar{R}\right)}\right\}\left(\frac{d_\ast(y,z)}{(t-s)^{\frac{1}{\beta}}\wedge d_\ast\left(\frac{1}{2}B,\left(\frac{2}{3}B\right)^c\right)}\right)^\theta\|f\|_{L^1\left(\frac{2}{3}B\right)},\label{hker}\\
\left|P_{t-s}^{\frac{2}{3}B}f(z)-P_{t'-s}^{\frac{2}{3}B}f(z)\right|\le&\frac{CW\left(\frac{2}{3}B\right)^{\frac{1}{\nu}}\big|t-t'\big|}{\mu\left(\frac{2}{3}B\right)}\exp\left\{\frac{(t'\wedge t)-s}{W\left(x_0,\bar{R}\right)}\right\}\big((t'\wedge t)-s\big)^{-(1+\nu^{-1})}\|f\|_{L^1(\frac{2}{3}B)}.\label{hkns}
\end{align}
Noting $t,t'<W(x_0,\bar{R})$ and $y,z\in\frac{1}{2}B$, we have by (\ref{*+}) and (\ref{*-}) that
$$\frac{\left(d_\ast\left(\frac{1}{2}B,\left(\frac{2}{3}B\right)^c\right)\right)^\beta}{W\left(\frac{1}{2}B\right)}\ge\frac{2^{\beta_1}}{6^{\beta_2}C_W^2L_0},\quad\frac{\left(\mathrm{diam}_{d_\ast}\left(\frac{2}{3}B\right)\right)^\beta}{W\left(\frac{1}{2}B\right)}\le 4^{\beta_2-\beta_1}C_W^2L_0\left(\frac{8}{3}\right)^{\beta_1},\quad\frac{d_\ast(y,z)^\beta}{W\left(\frac{1}{2}B\right)}\le 4^{\beta_2-\beta_1}C_W^2L_0\left(\frac{d(y,z)}{\frac{1}{2}R}\right)^{\beta_1}.$$
Combining these observations with (\ref{hker}) and (\ref{hkns}), we obtain by taking $f=u(s,\cdot)$ that
\begin{align}
I_2=&\ P_{t-s}^{\frac{2}{3}B}u(s,\cdot)(y)-P_{t'-s}^{\frac{2}{3}B}u(s,\cdot)(z)\le\left|P_{t-s}^{\frac{2}{3}B}u(s,\cdot)(y)-P_{t-s}^{\frac{2}{3}B}u(s,\cdot)(z)\right|+\left|P_{t-s}^{\frac{2}{3}B}u(s,\cdot)(z)-P_{t'-s}^{\frac{2}{3}B}u(s,\cdot)(z)\right|\notag\\
\le&\ \frac{C_2}{\mu\left(\frac{2}{3}B\right)}\left\{\left(\frac{W\left(\frac{1}{2}B\right)}{t-s}\right)^{\frac{1}{\nu}+\frac{\theta}{\beta}}\left(\frac{d(y,z)}{R}\right)^{\frac{\beta_1\theta}{\beta}}+\frac{W\left(\frac{1}{2}B\right)^{\frac{1}{\nu}}\big|t-t'\big|}{((t'\wedge t)-s)^{1+\frac{1}{\nu}}}\right\}\int_{\frac{2}{3}B}u(s,\cdot)d\mu\notag\\
\le&\ C_2\|u\|_Q\left\{\left(\frac{w'}{t-s}\right)^{\frac{1}{\nu}+\frac{\theta}{\beta}}\left(\frac{d(y,z)}{R}\right)^{\frac{\beta_1\theta}{\beta}}+\frac{(w')^{\frac{1}{\nu}}\big|t-t'\big|}{((t'\wedge t)-s)^{1+\frac{1}{\nu}}}\right\}.\label{I2}
\end{align}

Now we turn to $I_3$. Since $\overline{T}_Q^{(2/3)}(u)=\int_0^TT_B^{(2/3)}(u;t)dt<\infty,$ by the absolute continuity of integration, for each $\varepsilon'>0$, there exists $\delta_\ast=\delta_\ast(\varepsilon';u,B)>0$ such that when $(t\vee t')-\delta_\ast w'<s<t\wedge t'$,
\begin{equation}\label{I3}
I_3=2\int_s^{t\vee t'}T_B^{(2/3)}(u;s')ds'<\varepsilon'\overline{T}_Q^{(2/3)}(u).
\end{equation}

Given any $\varepsilon>0$, take
$$\tau_\ast:=\frac{\delta_0}{4}\wedge\frac{\varepsilon}{4C_1}\wedge\frac{1}{2}\delta_\ast\left(\varepsilon;u,B\right),\quad\tau^\ast=\frac{\varepsilon}{4C_2}\tau_\ast^{1+\frac{1}{\nu}}\quad\mbox{and}\quad\delta=\left(\frac{\varepsilon}{4C_2}\tau_\ast^{\frac{1}{\nu}+\frac{\theta}{\beta}}\right)^{\frac{\beta}{\beta_1\theta}}.$$
Then for any $|t-t'|<(\tau_\ast\wedge\tau^\ast)w'$ and $d(y,z)<\delta R$, with $s:=(t\wedge t')-\tau_\ast w'$ we see
$$(t\vee t')-s=|t-t'|+(t\wedge t')-s<2\tau_\ast w'.$$
In particular, (\ref{I3}) holds with $\varepsilon'=\frac{\varepsilon}{4}$, and $I_1<\frac{\varepsilon}{2}\|u\|_Q$ by (\ref{I1}). On the other hand, by (\ref{I2}),
$$I_2<C_2\|u\|_Q\left\{\tau_\ast^{-\frac{1}{\nu}+\frac{\theta}{\beta}}\delta^{\frac{\beta_1\theta}{\beta}}+\tau^\ast\tau_\ast^{-1-\frac{1}{\nu}}\right\}=\left(\frac{\varepsilon}{4}+\frac{\varepsilon}{4}\right)\|u\|_Q=\frac{\varepsilon}{2}\|u\|_Q.$$
Thus from (\ref{Is}) we obtain
$$|u(t,y)-u(t,z)|\le I_1+I_2+I_3<\frac{\varepsilon}{2}\|u\|_Q+\frac{\varepsilon}{2}\|u\|_Q+\varepsilon\overline{T}_Q^{(2/3)}(u)\le\varepsilon\left(\|u\|_Q+\overline{T}_Q^{(2/3)}(u)\right),$$
which completes the proof.
\end{proof}

This proposition shows that a function $u$, which is bounded and caloric on $Q$ with a finite $L^1$-tail, is uniformly continuous on any strict subcylinder of $Q$. Be careful that the uniformity depends irregularly on $B$ and $u$. Now we show the irregularity is essential:

\begin{lemma}
Assume that $(\mathrm{S}^-)$ holds. If $B$ is a metric ball and $J(x,U)$ is bounded from both sides for all $x\in B$ with some $U\Subset B^c$, then there exist caloric functions $u_n$ on $(0,\infty)\times B$ such that $\|u_n\|_{(0,\infty)\times B}+\overline{T}_{(0,\infty)\times(\overline{U})^c}^{B}(u_n)$ is uniformly bounded, but there exists a sequence $\{t_n\}_{n=1}^\infty$ that is convergent to some $t_0\in(0,\infty)$ such that for all sufficiently large $n\ge 1$,
\begin{equation}\label{irr}
\mathop{\mathrm{einf}}\limits_{x\in\frac{1}{4}B}\big|u_n(t_0,x)-u_n(t_n,x)\big|\ge\frac{1}{4}.
\end{equation}
\end{lemma}

Note that for a regular stable-like jump form with
$$J(x,y)\asymp\frac{1}{V(x,d(x,y))W(x,d(x,y))},$$
given any two different points $x_0,y_0$, this lemma can always be applied to $B=B(x_0,d(x_0,y_0)/3)$ and $U=B(y_0,d(x_0,y_0)/3)$.

\begin{proof}
By rescaling we assume $W(B)=1$ and $\frac{1}{2}\le J(x,U)\le 2$ for all $x\in B$. Let
$$f_n(t,x)=n\cdot 1_{1-\frac{1}{n}<t\le 1,x\in U}.$$
Clearly
$$T_{(0,\infty)\times(\overline{U})^c}^{B;p}(f_n)=\left(\int_{1-\frac{1}{n}}^1\left(T_{(\overline{U})^c}^B(f_n;s)\right)^pds\right)^{1/p}\le\left(\int_{1-\frac{1}{n}}^1\left(2n\right)^pds\right)^{1/p}=2n^{1-1/p}<\infty$$
for all $p\in[1,\infty)$. Hence by Lemma \ref{ext-F}, the caloric extension $u_n$ of $f_n$ into $(0,\infty)\times B$ is in $\mathcal{F}'(B)$ and
$$\|u_n\|_{(0,\infty)\times B}++\overline{T}_{(0,\infty)\times(\overline{U})^c}^{B}(u_n)\le 3\overline{T}_{(0,\infty)\times(\overline{U})^c}^B(f_n)\le 6.$$
However, by $(\mathrm{S}^-)$ there exists $\delta>0$ such that $P_t^B1_B\ge\frac{1}{2}$ on $\frac{1}{4}B$ for all $0<t<\delta$. Therefore, for all $n\ge\delta^{-1}$ and $\mu$-a.e.\ $x\in\frac{1}{4}B$,
$$u_n(1,x)=\int_{1-\frac{1}{n}}^1\int_B\int_UnJ(z,dy)p_{t-s}^B(x,dz)ds\ge\frac{n}{2}\int_{1-\frac{1}{n}}^1P_{t-s}^B1_B(x)ds\ge\frac{1}{4}.$$
On the other hand, trivially $u_n(1-\frac{1}{n},x)=0$. Thus (\ref{irr}) is proved with $t_n=1-\frac{1}{n}$.
\end{proof}

Meanwhile, if $T_Q^{(2/3;p)}(u)<\infty$ with some $p>1$, then the result is greatly improved:

\begin{proposition}\label{p-phr}
Under the same conditions as in Proposition \ref{sp-c}, for any $p>1$ and $\delta_0>0$, there exist $\delta,\tau>0$ such that for every ball $B$, all $u$ that is caloric on $Q=(0,T]\times B$ with an arbitrary $T>\delta_0W\left(\frac{1}{2}B\right)$, all $t,t'\in\left[\delta_0W\left(\frac{1}{2}B\right),T\right]$ and $\mu$-a.e.\ $y,z\in\frac{1}{2}B$ such that $|t-t'|<\tau W\left(\frac{1}{2}B\right)$ and $d(y,z)<\delta R$,
$$|u(t,y)-u(t',z)|<\varepsilon\left(\|u\|_Q+W(B)^{1-\frac{1}{p}}T_Q^{(\frac{2}{3};p)}(u)\right).$$
Further, $(\mathrm{PHR})$ holds, that is, $(\mathrm{TJ})+(\mathrm{Gcap})+(\mathrm{PI})\Rightarrow(\mathrm{PHR})$.
\end{proposition}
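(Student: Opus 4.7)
The plan is to keep the decomposition (\ref{Is}) from the proof of Proposition \ref{sp-c}, but to bound the tail contribution $I_3$ by the supremum $T_{Q_0}^{(2/3)}(|u|)$ rather than by absolute continuity of the integral. This immediately removes the $u$- and $B$-dependence of $\delta$ and $\tau$, giving the first assertion; letting $\tau$ decay polynomially in the scaled distance between $(t,y)$ and $(t',z)$ and optimising the three pieces $I_1,I_2,I_3$ then yields the H\"older estimate $(\mathrm{PHR}_+)$.

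For the refined continuity: pick $s$ so that $(t\wedge t')-s=\tau W(\tfrac{1}{2}B)$ with $\tau>0$ to be chosen. The estimates (\ref{I1}) for $I_1$ (via $(\mathrm{S}^+)$, available by Lemma \ref{surv}) and (\ref{I2}) for $I_2$ (via (\ref{hker})--(\ref{hkns})) carry over verbatim. For $I_3$, simply write
\[ I_3=2\int_s^{t\vee t'}T_B^{(2/3)}(|u|;s')\,ds'\le 2\bigl(\tau W(\tfrac{1}{2}B)+|t-t'|\bigr)T_{Q_0}^{(2/3)}(|u|)\le C\bigl(\tau+|t-t'|/W(\tfrac{1}{2}B)\bigr)W(B)\,T_{Q_0}^{(2/3)}(|u|). \]
Setting $M:=\|u\|_{Q_0}+W(B)T_{Q_0}^{(2/3)}(|u|)$, one picks $\tau$ small, then $\delta=d(y,z)/R$ and $|t-t'|/W(\tfrac{1}{2}B)$ small, so that each of $I_1,I_2,I_3$ is at most $\varepsilon M/3$; all such choices depend only on $\varepsilon$, $\delta_0$ and the structural constants, not on $u$ or $B$.

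For $(\mathrm{PHR}_+)$, apply the above with $T=W(B)$ and a small fixed $\delta_0$, and introduce the scaled distance
\[ \rho:=\max\!\left(\frac{d(y,z)}{R},\ \frac{W^{-1}(y,|t-t'|)}{R}\right). \]
From (\ref{W})--(\ref{W-1}) and $(\mathrm{VD})$ one has $|t-t'|/W(\tfrac{1}{2}B)\le C\rho^{\beta_1}$. Substituting $\tau:=\rho^{\sigma}$ with $\sigma>0$ to be determined gives
\[ I_1+I_3\le CM\bigl(\rho^{\sigma}+\rho^{\beta_1}\bigr),\qquad I_2\le CM\Bigl(\rho^{\,\beta_1\theta/\beta-\sigma(1/\nu+\theta/\beta)}+\rho^{\,\beta_1-\sigma(1+1/\nu)}\Bigr). \]
Choosing $\sigma>0$ strictly smaller than both $\tfrac{\beta_1\theta\nu}{\beta+\theta\nu}$ and $\tfrac{\beta_1\nu}{1+\nu}$ makes every exponent positive; letting $\gamma$ be the minimum of the four exponents one concludes $|u(t,y)-u(t',z)|\le CM\rho^{\gamma}$. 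A standard covering then produces the inner ball $\delta B$ and time window $(t_0-W(\delta B),t_0]$ required by $(\mathrm{PHR}_+)$.

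The main obstacle is the exponent balance: one must verify that the structural constants $\theta,\nu,\beta$ coming from \cite[Lemmas 6.9 and 6.11]{ghh+3} leave a nonempty window for $\sigma$, which is the arithmetic check carried out above. Uniformity in $B$ of all hidden constants follows from $(\mathrm{VD})$, $(\mathrm{TJ})$, the intrinsic-metric comparisons (\ref{*+})--(\ref{*-}) and $(\mathrm{S}^+)$ from Lemma \ref{surv}; the only dependence on $\delta_0$ is benign once $\delta_0$ is fixed.
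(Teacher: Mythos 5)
Your proposal is correct and follows essentially the same route as the paper: same $I_1+I_2+I_3$ decomposition from Proposition~\ref{sp-c}, same bounds for $I_1$ via $(\mathrm{S}^+)$ and $I_2$ via the heat-kernel H\"older estimates, and the same replacement of the absolute-continuity estimate for $I_3$ by the crude bound $2(t-s)T_{Q_0}^{(2/3)}(|u|)$. The only cosmetic difference is that the paper tunes the intermediate time $s$ through a scaled parameter $\xi(X,Y)$ to extract the exponent $\gamma=\beta_1\theta/(\beta(1+\nu^{-1})+\theta)$ directly, whereas you take $\tau=\rho^{\sigma}$ with a fixed $\sigma$ and verify the exponent window is nonempty (it is, since $\theta<\beta$ may be assumed), which is equally valid and indeed recovers the paper's exponent at the endpoint $\sigma=\gamma$.
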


\begin{proof}
It suffices to prove $(\mathrm{PHR})$, and the first follows by applying $(\mathrm{PHR})$ with $t_0=t\vee t'$ to $B$ (if $t_0\ge W(B)$) or $B(z,\frac{1}{2}W^{-1}(z,t_0))$ (if else).

Let $I_1,I_2,I_3$ and $w'$ be the same as in (\ref{Is}). Without loss of generality, we assume that
$$\mathcal{A}_u:=\|u\|_Q+W(B)^{1-\frac{1}{p}}T_Q^{(\frac{2}{3};p)}(u)\in(0,\infty)$$
and $\theta<\beta$ in (\ref{I2}). Fix $\delta_0=2^{-1/\beta_2}C_W^{-1}$, and denote
\begin{equation}\label{XY}
X=\frac{t-t'}{w'},\quad Y=\frac{d(y,z)}{R},\quad\xi=\delta_0\left(X^{-1-\frac{1}{\nu}+\frac{1}{p}}\left(Y^{\frac{\beta_1\theta}{\beta}}X^{-\frac{\theta}{\beta}}+1\right)\right)^{\frac{1}{\frac{1}{\nu}+\frac{\theta}{\beta}+1-\frac{1}{p}}}
\end{equation}
for all $t,t'\in[\delta_0W(B),W(B)]$ (say $t\ge t'$) and $\mu$-a.e.\ $y,z\in\frac{1}{2}B$.

By choosing
$$\delta=C_W^{-\frac{1}{\beta_1}}2^{-1-\frac{\beta}{\theta\beta_1}}\quad\mbox{and}\quad\gamma=\frac{\beta_1\frac{\theta}{\beta}}{\frac{1}{\nu}+\frac{\theta}{\beta}+1-\frac{1}{p}},$$
we see $X<2^{-\beta/\theta}$ for all $t,t'\in(W(B)-W(\delta B),W(B)]$ and $Y<2^{-\beta/(\theta\beta_1)}$ for all $y,z\in\delta B$. Thus clearly, $\xi\ge\delta_0$ and
$$s:=t'-\xi Xw'=t'-\delta_0w'\left(Y^{\frac{\beta_1\theta}{\beta}}+X^{\frac{\theta}{\beta}}\right)^{\frac{1}{\frac{1}{\nu}+\frac{\theta}{\beta}+1-\frac{1}{p}}}>\delta_0w'-\delta_0w'\left(\frac{1}{2}+\frac{1}{2}\right)^{\frac{1}{\frac{1}{\nu}+\frac{\theta}{\beta}+1-\frac{1}{p}}}=0.$$
Therefore, by (\ref{I1}), (\ref{I2}) and H\"older inequality for $I_3$ we have
\begin{align*}
\big|u(t,y)&-u(t',z)\big|\le I_1+I_2+I_3\\
\le&\ \frac{C_1\|u\|_Q}{w'}(t-s)+C_2\|u\|_Q\left\{\left(\frac{w'}{t'-s}\right)^{\frac{1}{\nu}+\frac{\theta}{\beta}}\left(\frac{d(y,z)}{R}\right)^{\frac{\beta_1\theta}{\beta}}+\frac{w'^{\frac{1}{\nu}}\big(t-t'\big)}{(t'-s)^{1+\frac{1}{\nu}}}\right\}+2(t-s)^{1-\frac{1}{p}}T_Q^{(2/3;p)}(u)\\
\le&\ C\left(\frac{t-s}{w'}+\left(\frac{w'}{t'-s}\right)^{\frac{1}{\nu}+\frac{\theta}{\beta}}\left(\frac{d(y,z)}{R}\right)^{\frac{\beta_1\theta}{\beta}}+\frac{w'^{\frac{1}{\nu}}\big(t-t'\big)}{(t'-s)^{1+\frac{1}{\nu}}}+\left(\frac{t-s}{w'}\right)^{1-\frac{1}{p}}\right)\mathcal{A}_u\\
=&\ C\left((1+\xi)X+\frac{Y^{\frac{\beta_1\theta}{\beta}}}{(\xi X)^{\frac{1}{\nu}+\frac{\theta}{\beta}}}+\frac{1}{\xi^{1+\frac{1}{\nu}}X^{\frac{1}{\nu}}}+\left((1+\xi)X\right)^{1-\frac{1}{p}}\right)\mathcal{A}_u\le C'(\delta_0)\left((\xi X)^{1-\frac{1}{p}}+\frac{Y^{\frac{\beta_1\theta}{\beta}}X^{-\frac{1}{\nu}-\frac{\theta}{\beta}}+X^{-\frac{1}{\nu}}}{\xi^{\frac{1}{\nu}+\frac{\theta}{\beta}}}\right)\mathcal{A}_u\\
=&\ 2C'(\delta_0)\delta_0\left(Y^{\frac{\beta_1\theta}{\beta}}+X^{\frac{\theta}{\beta}}\right)^{\frac{1}{\frac{1}{\nu}+\frac{\theta}{\beta}+1-\frac{1}{p}}}\mathcal{A}_u\le C\left(\delta_0,\frac{\theta}{\beta}\right)\left(Y+X^{\frac{1}{\beta_1}}\right)^{\frac{\beta_1\frac{\theta}{\beta}}{\frac{1}{\nu}+\frac{\theta}{\beta}+1-\frac{1}{p}}}\mathcal{A}_u\le C'\left(\frac{d(y,z)+W^{-1}(z,|t-t'|)}{R}\right)^\gamma\mathcal{A}_u,
\end{align*}
which is exactly the condition $(\mathrm{PHR})$.
\end{proof}

\begin{appendices}
\section{Joint measurabilty}

Here only $(\mathrm{VD})$ is assumed on $(M,d,\mu)$. Given $0<\varepsilon\le\varepsilon'<\infty$ and an open set $\Omega$ in $M$, we call $\{y_\iota\}_{\iota\in I}$ as an $(\varepsilon',\varepsilon)$-net of $\Omega$, if each $y_\iota\in\Omega$; for every two different indices $\iota,\iota'\in I$, $d(y_\iota,y_{\iota'})\ge\varepsilon$; and for all $x\in\Omega$, there exists $\iota\in I$ such that $d(x,y_\iota)<\varepsilon'$.

\begin{lemma}\label{nr-net}
There exists $C_\ast>0$ such that for every bounded open set $\Omega$ in $M$ and every $\varepsilon>0$, there exists a $(5\varepsilon,\varepsilon)$-net of $\Omega$ containing no more than $C_\ast\left\lceil\left(\frac{\mathrm{diam}(\Omega)}{\varepsilon}\right)^\alpha\right\rceil+1$ elements.
\end{lemma}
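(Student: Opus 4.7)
The plan is the classical maximal-packing argument for doubling metric spaces, relying on nothing beyond the quantitative equivalent of (VD) stated in Section 2. First, I would use Zorn's lemma (or a transfinite greedy construction) to select a maximal subset $Y=\{y_\iota\}_{\iota\in I}$ of $\Omega$ that is $\tfrac{1}{5}\varepsilon$-separated, i.e.\ with $d(y_\iota,y_{\iota'})\ge\tfrac{1}{5}\varepsilon$ whenever $\iota\neq\iota'$. The separation condition is built into the construction, and maximality gives the covering condition automatically: for any $x\in\Omega$ there must exist $\iota$ with $d(x,y_\iota)<\tfrac{1}{5}\varepsilon<\varepsilon$, since otherwise $Y\cup\{x\}$ would be a strictly larger $\tfrac{1}{5}\varepsilon$-separated subset of $\Omega$, contradicting maximality. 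So $Y$ is already an $(\varepsilon,\tfrac{1}{5}\varepsilon)$-net in $\Omega$.

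For the cardinality bound, I would run the standard packing estimate. The balls $B(y_\iota,\tfrac{1}{10}\varepsilon)$ are pairwise disjoint by the separation property, and each is contained in $B(y_0,D)$ with $D:=\mathrm{diam}(\Omega)+\tfrac{1}{10}\varepsilon$, where $y_0$ is any fixed element of $Y$ (assuming $I$ is nonempty; otherwise there is nothing to prove). Applying the equivalent form of (VD) from Section 2 with $R=D$, $r=\tfrac{1}{10}\varepsilon$, $x=y_0$, $y=y_\iota$ gives
\[
\frac{V(y_0,D)}{V(y_\iota,\tfrac{1}{10}\varepsilon)}\le C\left(\frac{D+d(y_0,y_\iota)}{\tfrac{1}{10}\varepsilon}\right)^{\alpha}\le C'\left(1+\frac{\mathrm{diam}(\Omega)}{\varepsilon}\right)^{\alpha},
\]
uniformly in $\iota$. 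Summing over $\iota\in I$ and using disjointness of the packing balls together with $\bigsqcup_\iota B(y_\iota,\tfrac{1}{10}\varepsilon)\subset B(y_0,D)$ yields
\[
|I|\le C''\left(1+\frac{\mathrm{diam}(\Omega)}{\varepsilon}\right)^{\alpha}.
\]

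Finally I would split into two regimes to match the stated form. When $\mathrm{diam}(\Omega)\ge\varepsilon$, the right-hand side is bounded by a constant multiple of $(\mathrm{diam}(\Omega)/\varepsilon)^\alpha$, giving the claim with constant $C_\ast$ depending only on $\alpha$ and the doubling constant. When $\mathrm{diam}(\Omega)<\varepsilon$, any single point of $\Omega$ is itself an $(\varepsilon,\tfrac{1}{5}\varepsilon)$-net (the separation condition is vacuous), so $|I|=1$ works and is absorbed into $C_\ast$. There is really no obstacle here: the argument is entirely standard, and the only care needed is the choice of auxiliary radius ($\tfrac{1}{10}\varepsilon$ is the natural one so that packing balls are disjoint while remaining small relative to the covering radius) and ensuring the constant depends only on $\alpha$ and the (VD) constant.
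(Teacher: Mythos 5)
Your proof is correct. The only genuine difference from the paper's argument is in how the net is constructed: you take a maximal $\tfrac{1}{5}\varepsilon$-separated subset of $\Omega$ via Zorn's lemma and observe that maximality forces the $\tfrac{1}{5}\varepsilon$-covering (hence a fortiori the $\varepsilon$-covering) property, whereas the paper invokes the $5r$-covering lemma (\cite[Theorem 1.2]{hein}) to extract a disjoint family $\{B(x_i,\varepsilon/5)\}$ whose $\varepsilon$-dilations cover $\Omega$. Both constructions are standard and yield an $(\varepsilon,\tfrac{1}{5}\varepsilon)$-net; yours in fact yields the slightly stronger $(\tfrac{1}{5}\varepsilon,\tfrac{1}{5}\varepsilon)$-net, and the $5r$-lemma is itself usually proved by a maximal-separated-set selection, so the two routes are close cousins. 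Your packing radius $\tfrac{1}{10}\varepsilon$ differs from the paper's $\tfrac{1}{5}\varepsilon$ because your centers are only $\tfrac{1}{5}\varepsilon$-separated rather than having disjoint $\tfrac{1}{5}\varepsilon$-balls, but this only changes the numerical constant. The cardinality estimate via disjoint packing balls inside $B(y_0,\mathrm{diam}(\Omega)+\tfrac{1}{10}\varepsilon)$ and the quantitative form of $(\mathrm{VD})$ is identical in substance to the paper's, as is the split between the regimes $\varepsilon\le\mathrm{diam}(\Omega)$ and $\varepsilon>\mathrm{diam}(\Omega)$. No gaps; this is a valid alternative proof of the same flavor.
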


\begin{proof}
The result trivially holds if $\Omega$ is a singleton (where $\Omega$ itself is a $(5\varepsilon,\varepsilon)$-net of $\Omega$). Hence we only consider the case $\mathrm{diam}(\Omega)>0$.

Fix arbitrary $y_0\in\Omega$ and set $R=2\mathrm{diam}(\Omega)$. Then clearly $\Omega\in B(y_0,R)$. Note that $\{B(x,\varepsilon)\}_{x\in\Omega}$ covers $\Omega$. By \cite[Theorem 1.2]{hein}, there are points $x_i$ ($i\in I$, where $I$ is an arbitrary index set) such that $B(x_i,\varepsilon)\cap B(x_j,\varepsilon)=\emptyset$ for any $i\ne j$, and for any $y\in\Omega$ there exists $i$ such that $y\in B(x_i,5\varepsilon)$. It follows that $d(x_i,x_j)\ge\varepsilon$. Suppose there are $N$ different $x_i$'s, say $x_1,\dotsc,x_N$. Then
$$\sum\limits_{i=1}^NV(x_i,\varepsilon)=\mu\left(\bigcup\limits_{i=1}^NB(x_i,\varepsilon)\right)\le V(y_0,R+\varepsilon),$$
which yields with the help of $(\mathrm{VD})$ that
$$N\le\max_{1\le i\le N}\frac{V(y_0,R+\varepsilon)}{V(x_i,\varepsilon)}\le\max_{1\le i\le N}C_{\mathrm{VD}}\left(\frac{d(y_0,x_i)+R+\varepsilon}{\varepsilon}\right)^\alpha\le 2C_{\mathrm{VD}}\cdot 4^\alpha\left\lceil\left(\frac{\mathrm{diam}(\Omega)}{\varepsilon}\right)^\alpha\right\rceil.$$
The proof is then completed with $C_\ast=2^{1+2\alpha}C_{\mathrm{VD}}$.
\end{proof}

\begin{lemma}\label{meas}
Let $\Omega$ be an arbitrary open set in $M$. Then for any $u:(t_1,t_2]\to L^1(\Omega)$, if $t\mapsto\int_\Omega u(t,\cdot)\varphi d\mu$ is continuous on $(t_1,t_2]$ for every $\varphi\in C_0(\Omega)$, then $u$ admits a version that is $\hat{\mu}$-measurable on $Q:=(t_1,t_2]\times\Omega$.
\end{lemma}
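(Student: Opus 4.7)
The plan is to construct a sequence of piecewise-constant (in the space variable) approximations $u_n$ to $u$ that are manifestly jointly measurable, and then pass to the limit via the Lebesgue differentiation theorem on the doubling metric measure space.

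First, I would exhaust $\Omega$ by relatively compact open subsets $\Omega_1\Subset\Omega_2\Subset\cdots$ with $\bigcup_m\Omega_m=\Omega$. For each $m$ and $n\ge 1$, apply Lemma \ref{nr-net} on $\Omega_m$ with $\varepsilon_n=2^{-n}$ to produce a finite $(\varepsilon_n,\varepsilon_n/5)$-net $\{y^{m,n}_i\}_{i=1}^{N_{m,n}}$ in $\Omega_m$. From this net I would build a disjoint Borel partition $\{E^{m,n}_i\}$ of $\Omega_m$ by a Voronoi-type rule with lexicographic tie-breaking, arranged so that
$$B(y^{m,n}_i,\varepsilon_n/10)\cap\Omega_m\subset E^{m,n}_i\subset B(y^{m,n}_i,\varepsilon_n).$$
By $(\mathrm{VD})$, this shape control yields $\mu(B(x,2\varepsilon_n))\le C\mu(E^{m,n}_{i(x)})$ whenever $x\in E^{m,n}_i$, with $C$ independent of $m,n,x$.

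Second, I would define
$$u_n(t,x)=\sum_{m\ge 1}\sum_{i:\mu(E^{m,n}_i)>0}a^{m,n}_i(t)\,1_{E^{m,n}_i\setminus\Omega_{m-1}}(x),\quad a^{m,n}_i(t):=\frac{1}{\mu(E^{m,n}_i)}\int_{E^{m,n}_i}u(t,\cdot)\,d\mu$$
(setting $\Omega_0=\emptyset$). The crucial point is Borel measurability of each $a^{m,n}_i$ in $t$. I would prove this by a monotone class argument: the family
$$\mathcal{C}:=\Bigl\{\varphi\in L^\infty(\Omega)\text{ with compact support}:t\mapsto\langle u(t,\cdot),\varphi\rangle\text{ is Borel measurable}\Bigr\}$$
is a vector space closed under bounded pointwise convergence (via dominated convergence, valid because $u(t,\cdot)\in L^1(\Omega)$ and the approximants share a common compact support and uniform bound), and contains $C_0(\Omega)$ by the hypothesis. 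A standard functional monotone class theorem then forces $\mathcal{C}$ to contain all bounded Borel functions with compact support in $\Omega$; in particular $1_{E^{m,n}_i}\in\mathcal{C}$. Since each summand in $u_n$ is the product of a Borel function of $t$ and a Borel function of $x$, $u_n$ is jointly Borel, hence $\hat\mu$-measurable.

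Third, I would set $\tilde u(t,x):=\limsup_{n\to\infty}u_n(t,x)$, which is $\hat\mu$-measurable as a countable $\limsup$ of measurable functions. To show $\tilde u(\cdot,\cdot)$ is a version of $u$, fix $t\in(t_1,t_2]$. Since $u(t,\cdot)\in L^1(\Omega)$ and $(M,d,\mu)$ satisfies $(\mathrm{VD})$, the Lebesgue differentiation theorem gives, for $\mu$-a.e.\ $x\in\Omega$,
$$\lim_{n\to\infty}\frac{1}{\mu(B(x,2\varepsilon_n))}\int_{B(x,2\varepsilon_n)}|u(t,y)-u(t,x)|\,d\mu(y)=0.$$
Combined with $E^{m,n}_{i(x)}\subset B(x,2\varepsilon_n)$ and the volume comparison from step one, this yields $u_n(t,x)\to u(t,x)$ $\mu$-a.e., so $\tilde u(t,\cdot)=u(t,\cdot)$ in $L^1(\Omega)$.

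The main obstacle I anticipate is the Borel measurability of the cell averages $a^{m,n}_i(t)$, since the hypothesis only provides continuity of $t\mapsto\langle u(t,\cdot),\varphi\rangle$ against continuous test functions while the cell indicators are discontinuous; this is precisely what the monotone class argument above handles, provided one is careful to keep the approximating continuous functions uniformly bounded with a common compact support so that dominated convergence applies for each fixed $t$.
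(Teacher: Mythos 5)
Your construction follows the same skeleton as the paper's: reduce to a bounded $\Omega$ (or exhaust), build nets from Lemma \ref{nr-net}, form piecewise approximations $u_n$ that are jointly measurable, recover $u(t,\cdot)$ $\mu$-a.e.\ for each $t$ by a Lebesgue-differentiation argument on the doubling space, and take $\tilde u$ to be the (lim)sup of the $u_n$. The single genuine point of departure is the test objects used in the cell averages: you average against Voronoi cell indicators $1_{E^{m,n}_i}$, whereas the paper averages against \emph{continuous} cutoffs $\varphi_{i,n}\in\mathrm{cutoff}(U_{i,n},V_{i,n})\subset C_0(\Omega)$. The hypothesis gives continuity of $t\mapsto\int u(t,\cdot)\varphi\,d\mu$ only for $\varphi\in C_0(\Omega)$, so the paper's choice makes $u_n(\cdot,x)$ continuous in $t$ for free (hence jointly measurable as a Carath\'eodory function), while your choice requires the extra monotone-class step to push measurability in $t$ from $C_0(\Omega)$ to Borel indicators. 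Both routes work; the paper's is shorter because it spends nothing on that extension.

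Two small repairs are needed in your version. First, the functional monotone class theorem you invoke requires the constant function $1$ to lie in the ambient vector space, but your $\mathcal{C}$ consists of functions with compact support and so cannot contain $1$ unless $\Omega$ is relatively compact. The fix is to localize: for fixed $m$, work with bounded Borel functions on $\Omega_{m+1}$; here $1_{\Omega_{m+1}}$ is the increasing limit of functions in $C_0(\Omega_{m+1})$, so by dominated convergence (using $u(t,\cdot)\in L^1$) it lies in the localized class, and the multiplicative-system theorem on $\Omega_{m+1}$ then gives $1_{E}\in\mathcal{C}$ for all Borel $E\subset\Omega_m$ (equivalently, run a Dynkin $\pi$--$\lambda$ argument on sets). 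Second, the volume comparison $\mu(B(x,2\varepsilon_n))\le C\,\mu(E^{m,n}_{i(x)})$ uses $B(y^{m,n}_{i(x)},\varepsilon_n/10)\cap\Omega_m\supset B(y^{m,n}_{i(x)},\varepsilon_n/10)$, which fails when $y^{m,n}_{i(x)}$ is within $\varepsilon_n/10$ of $\partial\Omega_m$; but for fixed $x\in\Omega_m\setminus\overline{\Omega_{m-1}}$ one has $d(x,\partial\Omega_m)>0$, so the comparison holds for all $n$ large enough depending on $x$, which is all the differentiation argument needs -- this deserves a sentence. With these two fixes the proposal is correct.
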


In particular, any function $u$ that is (sub/super)caloric in the sense of \cite{ghh+2} is globally $\hat{\mu}$-measurable.

\begin{proof}
It suffices to assume that $\Omega$ is bounded (otherwise we focus on $\Omega\cap B(x_0,n)$ for every $n\ge 1$ and then take the limit). Thus by Lemma \ref{nr-net}, for each $n\ge 0$, there exists a $(5^{-n},5^{-n-1})$-net $\{y_{i,n}\}_{1\le i\le M_n}$ of $\Omega$. Let
$$V_{i,n}:=B(y_{i,n},5^{-n})\cap\Omega\quad\mbox{and}\quad U_{i,n}:=B\left(y_{i,n},5^{-n-1}\right)\cap\left\{x:d(x,\Omega^c)>\frac{5^{-n-1}}{2}\wedge\frac{d(y_{i,n},\Omega^c)}{2}\right\}.$$
Fix $\varphi_{i,n}\in\mathrm{cutoff}(U_{i,n},V_{i,n})$, which exists and satisfies $\|\varphi_{i,n}\|_1>0$ since
$$V_{i,n}\Supset U_{i,n}\supset B\left(y_{i,n},5^{-n-1}\wedge 2^{-1}d(y_{i,n},\Omega^c)\right).$$
For each $x\in\Omega$ and $n\ge 0$, let $i_1^{(n)}(x),\cdots,i_{k(n,x)}^{(n)}(x)$ be all the integers $i\in\{1,\dotsc,M_n\}$ such that $d(x,y_{i,n})<5^{-n}$. Define
$$u_n(t,x):=\sum\limits_{k=1}^{k(n,x)}\int u(t,z)\varphi_{i_k^{(n)},n}(z)d\mu(z)\left(\sum\limits_{k'=1}^{k(n,x)}\int\varphi_{i_{k'}^{(n)},n}(z')d\mu(z')\right)^{-1}.$$

By the assumptions on $u$, we see easily that $u_n(t,\cdot)$ is $\mu$-measurable for any $t\in(t_1,t_2]$, while $u_n(\cdot,x)$ is continuous in $t$ for any $x\in\Omega$. It follows that $u_n$ is $\hat{\mu}$-measurable on $Q$. Further, for each $t\in(t_1,t_2]$, parallel to \cite[Theorem 1.8]{hein} (Lebesgue's differentiation lemma), there exists $N_t\subset\Omega$ such that $\mu(N_t)=0$, and for each $t_1<t\le t_2$ and $x\in\Omega$, $\lim\limits_{n\to\infty}u_n(t,x)$ exists if and only if $x\notin N_t$. Let
$$\tilde{u}(t,x):=\begin{cases}\lim\limits_{n\to\infty}u_n(t,x)&\mbox{for all}\quad x\in\Omega\setminus N_t;\\
0&\mbox{for all}\quad x\in N_t.\end{cases}$$
By \cite[Theorem 1.8]{hein}, $\tilde{u}(t,\cdot)=u(t,\cdot)$ for $\mu$-a.e.\ in $\Omega$ with any $t\in(t_1,t_2]$. Since
$$\mathcal{N}_Q:=\{(t,x)\in Q:x\in N_t\}=\left\{(t,x):\varlimsup\limits_{n\to\infty}u_n(t,x)\ne\varliminf\limits_{n\to\infty}u_n(t,x)\right\}$$
is $\hat{\mu}$-measurable, then
$$\hat{\mu}\left(\mathcal{N}_Q\right)=\int_{t_1}^{t_2}\mu(N_t)dt=0$$
by Tonelli theorem. Thus $\tilde{u}$ is $\hat{\mu}$-measurable on $Q$, which is then a desired version of $u$.
\end{proof}

\section{The stochastic counterpart}\label{prob}

Let $M_\partial:=M\cup\{\partial\}$ be the one-point compactification of $M$, and $\mathbb{P}^x$ be a Borel probability on $M_\partial$ for each $x\in M_\partial$. A strong Markov process $\left(X,\{\mathbb{P}^x\}_{x\in M_\partial}\right)$ is called a \emph{Hunt process} on $M$, if $\mathbb{P}^x[X_0=x]=1$ for all $x\in M$; $\mathbb{P}^\partial[X_t=\partial]=1$ for all $t\ge 0$; $t\mapsto X_t(\omega)$ is right continuous and admits left limit for every $\omega$; and $X$ is quasi-left continuous, that is, $\mathbb{P}^\nu[X_{\sigma_n}\to X_\sigma\ |\ \sigma<\infty]=1$ for any Borel probability $\nu$ on $M_\partial$ and any stopping times $\sigma_n\uparrow\sigma$. Here $\mathbb{P}^\nu(\cdot):=\int_{M_\partial}\mathbb{P}^x(\cdot)d\nu(x)$. For any Borel set $A\subset M_\partial$, define the \emph{hitting} and \emph{exit times} of $A$ respectively by
$$\sigma_A=\inf\big\{t\ge 0:X_t\in A\big\},\quad\tau_A=\inf\big\{t\ge 0:X_t\notin A\big\}=\sigma_{M_\partial\setminus A}.$$

Given a heat semigroup $P_t$ on $L^2(M,\mu)$, there exists a unique Hunt process $X$ whose transition law is given by $\mathbb{P}^x[X_t\in A]=P_t1_A(x)$ for every $\mu$-measurable set $A\subset M$, all $t>0$ and $x\in M\setminus\mathcal{N}$, where $\mathcal{N}$ is a \emph{properly exceptional} set, that is, $\mathbb{P}^\nu[\sigma_\mathcal{N}<\infty]=0$ for every Borel probability $\nu$.

For any measurable function $f$ on $M$, we set $f(\partial)=0$. This way,
$$P_t^\Omega f(x)=\mathbb{E}^x\left[f\left(X_t^\Omega\right)\right],\quad\mbox{where}\quad X_t^\Omega(\omega):=\begin{cases}X_t(\omega),&\mbox{if }t<\tau_\Omega(\omega)\\
\partial,&\mbox{if }t\ge\tau_\Omega(\omega)\end{cases}$$
for any $f\in L^1(\Omega)$ and any open set $\Omega$ in $M$ by \cite[Formulae (3.3.1) and (3.3.2)]{cf}. In particular,
$$P_t^\Omega 1_\Omega(x)=\mathbb{E}^x\left[1_\Omega\left(X_t^\Omega\right)\right]=\mathbb{P}^x\left[X_t^\Omega\in\Omega\right]=\mathbb{P}^x\left[\tau_\Omega>t\right]\quad\mbox{and}\quad G^\Omega1_\Omega(x)=\int_0^\infty P_t^\Omega 1_\Omega(x)dt=\mathbb{E}^x\left[\tau_\Omega\right],$$
which is the reason why conditions $(\mathrm{S}^\pm)$, $(\mathrm{E})$ are called ``survival'' and ``exit time'' estimates.

Further, take $\hat{X}_s=(V_0-s,X_s)$, where $\mathbb{P}^{(t,x)}[V_0=t]=1$ for all $t\in\mathbb{R}$ and $x\in M$. Denote
$$\hat{\sigma}_D=\inf\left\{s\ge 0:\hat{X}_s\in D\right\}\quad\mbox{and}\quad\hat{\tau}_D=\inf\left\{s\ge 0:\hat{X}_s\notin D\right\}$$
for every Borel set $D$ in $\mathbb{R}\times M$. A function $u:(t_1,t_2]\times M\to\mathbb{R}$ is called $X$-\emph{caloric} on $Q=(t_1,t_2]\times\Omega$, if $u$ is nearly Borel measurable (that is, for any $a\in\mathbb{R}$, there exist Borel sets $A_1,A_2\subset Q_\infty:=(t_1,t_2]\times M$ such that $A_1\subset\{(t,x)\in Q_\infty:u(t,x)\le a\}\subset A_2$ and $\mathbb{P}^\nu\left[\hat{\sigma}_{A_2\setminus A_1}<\infty\right]=0$ for all Borel measure $\nu$ on $Q_\infty$), and there exists a properly exceptional set $\mathcal{N}_u\subset M$ such that for any open subset $D\Subset Q$, all $t_1<t\le t_2$ and $x\in M_\partial\setminus\mathcal{N}_u$,
$$u(t,x)=\mathbb{E}^{(t,x)}\left[u\left(\hat{X}_{\hat{\tau}_D}\right)\right].$$

Recall by \cite[Formula (A.3.33)]{cf} that $(2J(x,dy),t)$ is a \emph{L\'evy system} of the Hunt process $X$, that is, for any stopping time $\tau$, any Borel functions $g:\mathbb{R}_+\to\mathbb{R}_+$ and $\chi:M_\partial\times M_\partial\to\mathbb{R}$ such that $\chi(x,x)=0$ for all $x\in M_\partial$,
\begin{equation}\label{levy}
\mathbb{P}^x\left[\sum\limits_{s\le\tau}g(s)\chi(X_{s-},X_s)\right]=\mathbb{E}^x\left[\int_0^\tau g(s)\int_{M_\partial}\chi(X_s,z)2J(X_s,dz)ds\right].
\end{equation}
Here we emphasize that, according to \cite[Formulae (4.3.14) and (4.3.16)]{cf}, the term $J(x,dy)$ in Chen's works \cite{cf,ckw-p} is equal to our $2J(x,dy)$ (though there is misleading information in \cite[Formula (1.1)]{ckw-p}). That is why the L\'evy system is expressed differently.

From (\ref{levy}) and the strong Markov property of $X$, we could easily check that:

\begin{lemma}\label{exX}
The caloric extension $f_{t_1,\lambda B}$ is always $X$-caloric on $(t_1,t_2]\times\lambda B$.\qed
\end{lemma}
In comparison, $f_{t_1,\lambda B}$ is caloric when $T_{(t_1,t_2]\times B}^{(\lambda;2)}(f)<\infty$, but it may not be in $\mathcal{F}'(\Omega)$ otherwise. Hence we cannot expect a general $X$-caloric function to be (analytically) caloric.

Meanwhile, by \cite[Remark 1.12]{ckw-p}, for any two open sets $U\subset\Omega$ in $M$, $P_t^\Omega f$ is always $X$-caloric on $(0,\infty)\times U$. Following the same idea, we have:

\begin{proposition}\label{AIS}
Assume that $(\mathcal{E},\mathcal{F})$ is a regular Dirichlet form. Given arbitrary $\lambda\in(0,1)$ and cylinder $Q=(t_1,t_2]\times B$, where $B$ is a metric ball in $M$, then any function $u$ that is continuous and caloric on $Q$ with $\overline{T}_Q^{(\lambda)}(u)<\infty$ is $X$-caloric on $(t_1,t_2]\times\lambda B$.
\end{proposition}

\begin{proof}
Without loss of generality, we assume $s_0=t_2$ so that $\frac{d}{dt}(u(t,\cdot),\varphi)$ exists for any $\varphi\in\mathcal{F}(B)$ and $t_1<t\le t_2$.

Step 1. Consider the case $T_Q^{(\lambda)}(u)<\infty$. For each $0<\lambda'<\lambda$, take
$$\phi_{\lambda'}\in\mathrm{cutoff}\left(\frac{\lambda+\lambda'}{2}B,\lambda B\right)\quad\mbox{and}\quad u_{t;\lambda'}(x):=u(t,x)-\left(u(1-\phi_{\lambda'})\right)_{t_1,\lambda'B}(t,x).$$
Then $(t,x)\mapsto u_{t;\lambda'}(x)$ is caloric on $(t_1,t_2]\times\lambda'B$ and is equal to $u(t,\cdot)\phi_{\lambda'}$ on $(\lambda'B)^c$. In particular, $u_{t;\lambda'}\in\mathcal{F}(\lambda B)$. Therefore, for any $\varphi\in\mathcal{F}(\lambda'B)$ and $t_1<t\le t_2$, we see
\begin{align*}
\lim\limits_{s\downarrow 0}\frac{1}{s}\mathbb{E}^{\varphi\cdot\mu}&\left[u_{t;\lambda'}\left(X_s^{\lambda B}\right)-u_{t;\lambda'}\left(X_0^{\lambda B}\right)\right]=\lim\limits_{s\downarrow 0}\frac{1}{s}\int_{\lambda B}\left(\mathbb{E}^x\left[u_{t;\lambda'}\left(X_s^{\lambda B}\right)\right]-u_{t;\lambda'}(x)\right)\varphi(x)d\mu(x)\\
=&\ \lim\limits_{s\downarrow 0}\frac{1}{s}\left(P_s^{\lambda B}u_{t;\lambda'}-u_{t;\lambda'},\varphi\right)=\mathcal{E}(u_{t;\lambda'},\varphi)=-\frac{\mathrm{d}}{\mathrm{d}t}(u_{t;\lambda'},\varphi)=\lim\limits_{s\downarrow 0}\frac{1}{s}\mathbb{E}^{\varphi\cdot\mu}\left[u_{t-s;\lambda'}\left(X_0^{\lambda B}\right)-u_{t;\lambda'}\left(X_0^{\lambda B}\right)\right].
\end{align*}
In particular, since $\cup_{0<\lambda'<\lambda}\mathcal{F}(\lambda'B)$ is dense in $L^2(\lambda B)$, then in the weak $L^2(\lambda B)$ sense,
\begin{equation}\label{cal2}
\lim\limits_{\lambda'\uparrow\lambda,s\downarrow 0}\frac{1}{s}\mathbb{E}^\bullet\left[u_{t;\lambda'}\left(X_s^{\lambda B}\right)-u_{t-s;\lambda'}\left(X_0^{\lambda B}\right)\right]=0.
\end{equation}

For any $t_1<t_0\le t_2$, $0\le s\le t<t_1-t_0$ and $\mu$-a.e.\ $x\in\lambda' B$, by Lemma \ref{exX} we obtain
\begin{align*}
\mathbb{E}^{t_0,x}&\left[u\left(\hat{X}_{t\wedge\tau_{\lambda B}}\right)\big|\mathfrak{F}_{s\wedge\tau_{\lambda B}}\right]=\mathbb{E}^x\left[u\left(t_0-t,X_t^{\lambda B}\right)\big|\mathfrak{F}_{s\wedge\tau_{\lambda B}}\right]=\mathbb{E}^x\left[\left(\left(u(1-\phi_{\lambda'})\right)_{t_1,\lambda B}+u_{t_0-t;\lambda'}\right)\left(X_t^{\lambda B}\right)\big|\mathfrak{F}_{s\wedge\tau_{\lambda B}}\right]\\
&=\left(u(1-\phi_{\lambda'})\right)_{t_1,\lambda B}\left(t_0-s,X_s^{\lambda B}\right)+\mathbb{E}^{X_s^{\lambda B}}\left[u_{t_0-t;\lambda'}\left(X_{t-s}^{\lambda B}\right)\right]\\
&=\left(u(1-\phi_{\lambda'})\right)_{t_1,\lambda B}\left(t_0-s,X_s^{\lambda B}\right)+\mathbb{E}^{X_s^{\lambda B}}\left[u_{t_0-s;\lambda'}\left(X_0^{\lambda B}\right)\right]+\int_s^t\frac{\mathrm{d}}{\mathrm{d}t'}\mathbb{E}^{X_s^{\lambda B}}\left[u_{t_0-t';\lambda'}\left(X_{t'}^{\lambda B}\right)\right]dt'\\
&=\left(u(1-\phi_{\lambda'})\right)_{t_1,\lambda B}\left(t_0-s,X_s^{\lambda B}\right)+u_{t_0-s;\lambda'}\left(X_s^{\lambda B}\right)+\int_s^t\lim\limits_{s'\downarrow 0}\frac{1}{s'}\mathbb{E}^{X_s^{\lambda B}}\left[u_{t_0-t'-s';\lambda'}\left(X_{t'+s'}^{\lambda B}\right)-u_{t_0-t';\lambda'}\left(X_{t'}^{\lambda B}\right)\right]dt'\\
&=u\left(t_0-s,X_s^{\lambda B}\right)+\int_s^tP_{t'}^{\lambda B}\left\{\lim\limits_{s'\downarrow 0}\frac{1}{s'}\mathbb{E}^\bullet\left[u_{t_0-t'-s';\lambda'}\left(X_{s'}^{\lambda B}\right)-u_{t_0-t';\lambda'}\left(X_0^{\lambda B}\right)\right]\right\}\left(X_s^{\lambda B}\right)dt'.
\end{align*}
Taking $\lambda'\uparrow\lambda$ and combining (\ref{cal2}), we see for $\mu$-a.e.\ $x\in\lambda B$ that
$$\mathbb{E}^{t_0,x}\left[u\left(\hat{X}_{t\wedge\tau_{\lambda B}}\right)\big|\mathfrak{F}_{s\wedge\tau_{\lambda B}}\right]=u\left(t_0-s,X_s^{\lambda B}\right)=u\left(\hat{X}_{s\wedge\tau_{\lambda B}}\right),$$
which holds for all $x\in\lambda B\setminus\mathcal{N}$ by the continuity of $u$ and quasi-continuity of $X$.

By the optimal sampling property for the martingale $u\left(\hat{X}_{\bullet\wedge\tau_{\lambda B}}\right)$, it follows that
$$u(t_0,x)=\mathbb{E}^{(t_0,x)}\left[u\left(\hat{X}_{\hat{\tau}_D\wedge\tau_{\lambda B}}\right)\right]=\mathbb{E}^{(t_0,x)}\left[u\left(\hat{X}_{\hat{\tau}_D}\right)\right]$$
for any $D\Subset(t_1,t_2]\times\lambda B$. That is, $u$ is $X$-caloric on $(t_1,t_2]\times\lambda B$, with $\mathcal{N}_u=\mathcal{N}$.

Step 2. For general $u$, take $u_\varepsilon$ for any $0<\varepsilon<t_2-t_1$ in the same way as in (\ref{uep}), which is then caloric, continuous on $Q_\varepsilon:=(t_1+\varepsilon,t_2]\times B$ and $T_{Q_\varepsilon}^{(\lambda)}(u_\varepsilon)<\infty$. Thus $u_\varepsilon$ is $X$-caloric on $(t_1+\varepsilon,t_2]\times\lambda B$ by Step 1. The desired result follows by taking $\varepsilon\downarrow 0$.
\end{proof}

Note that the continuity of $u$ is only used to ensure that $\mathcal{N}_u$ is independent of $t$.
\end{appendices}

\subsection*{Acknowledgment}
The author would like to thank Professor Jiaxin Hu for introducing the topic, and also for discussions on the growth lemmas. The author is grateful to Professors Alexander Grigor'yan, Moritz Kassmann and other colleagues for suggestions on this manuscript.

\end{document}